\newtheorem{proposition}{Proposition}[section]
\newtheorem{lemma}[proposition]{Lemma}
\newtheorem{corollary}[proposition]{Corollary}
\newtheorem{theorem}[proposition]{Theorem}
\newtheorem{remark}[proposition]{Remark}
\theoremstyle{definition}
\newcommand{\selabel}[1]{\label{se:#1}}
\newcommand{\seref}[1]{Section~\ref{se:#1}}
\def\a{\alpha}
\def\b{\beta}
\def\D{\Delta}
\def\ep{\varepsilon}
\def\l{\lambda}
\def\ol{\overline}
\def\ot{\otimes}
\def\O{\Omega}
\def\oo{\infty}
\def\<{\leqslant}
\def\>{\geqslant}
\begin{document}
\title{Representations of the small quasi-quantum group}
\author{Hua Sun}
\address{School of Mathematical Science, Yangzhou University, Yangzhou 225002, China}
\email{huasun@yzu.edu.cn}
\author{Hui-Xiang Chen}
\address{School of Mathematical Science, Yangzhou University, Yangzhou 225002, China}
\email{hxchen@yzu.edu.cn}
\author{Yinhuo Zhang}
\address{Department of Mathematics $\&$  Statistics, University of Hasselt, Universitaire Campus, 3590 Diepeenbeek, Belgium}
\email{yinhuo.zhang@uhasselt.be}
\thanks{2010 {\it Mathematics Subject Classification}. 16G30, 16T99}
\keywords{Quasi-quantum group, indecomposable modules, tensor product, Green ring}
\begin{abstract}
In this paper, we study the representation theory of the small quantum group $\overline{U}_q$ and the small quasi-quantum group $\widetilde{U}_q$,
where $q$ is a primitive $n$-th root of unity and $n>2$ is odd. All finite dimensional indecomposable $\widetilde{U}_q$-modules are described and classified. Moreover, the decomposition rules for the tensor products of $\widetilde{U}_q$-modules are given. Finally, we describe the structures of the projective class ring $r_p(\widetilde{U}_q)$ and the Green ring $r(\widetilde{U}_q)$. We show that $r(\overline{U}_q)$ is isomorphic to a subring of $r(\widetilde{U}_q)$, and the stable Green rings $r_{st}(\widetilde{U}_q)$ and $r_{st}(\overline{U}_q)$ are isomorphic.
\end{abstract}
\maketitle

\section{\bf Introduction}\selabel{1}
 In recent years, breakthroughs have been made in the classification of fusion categories of low ranks. For example, Ostrik classified the fusion categories of rank 2  in \cite{Ostrik03-2}, and the pivotal fusion categories of rank 3 in \cite{Ostrik03-3}. Rowell, Strong and Wang classified the unitary modular categories of ranks less than 4 in  \cite{E.R.Z}.
 In \cite{ChenZhang}, Chen and Zhang showed that a finite tensor category $\mathcal{C}$ (not necessarily rigid) is uniquely determined by its structure invariant data $(r(\mathcal{C}), A(\mathcal{C}), \phi, a_{i,j,l})$, where $r(\mathcal{C})$ is the Green ring of $\mathcal{C}$, $A(\mathcal{C})$ is the Auslander algebra of $\mathcal{C}$, $\phi$ is an algebra map  from $A({\mathcal C})\otimes_{\Bbbk}A({\mathcal C})$ to an algebra $M({\mathcal C})$ associated to ${\mathcal C}$, and $a_{i,j,l}$ is a family of ``invertible" matrices playing the role of the associator. Thus the classification of finite tensor categories (not necessarily rigid)  of a fixed rank can be done (in theory) by classifying the aforementioned structure invariant data.  However, in practice it is quite hard to classify all the invariant data, in particular  the associators,  which rely on  both the Green rings and the Auslander algebras. So we ask ourselves whether we can begin with a finite tensor category and determine all finite tensor categories with the same Green ring or the same Auslander algebra.  The classification of fusion categories or semisimple categories of a fixed rank is in this case with the same Auslander algebra.
 The study of 3-cocycle twisting from a Hopf algebra  to a quasi-Hopf algebra  made it possible to deform the representation category of a Hopf algebra to a non-tensor equivalent representation category of a quasi-Hopf algebra, but with the same Green ring, see \cite[Example 2.3.8]{EGNO} and \cite[Lemma 5.1]{NgP}.  This is in fact to change the associator of the representation category of the Hopf algebra by a 3-cocycle and to keep the Green ring and the Auslander algebra unchanged.  A different example of this phenomena  comes  from  the group algebras $\Bbbk Q_8$ and $\Bbbk D_8$ without 3-cocycle twisting.   The representation categories of the two group algebras are not tensor equivalent,  but they have the same Green ring and the same Auslander algebra (over an algebraically closed field).  Motivated by these examples we look at other constructions of quasi-Hopf algebras.  In \cite{YY} the authors constructed a class of finite-dimensional quasi-Hopf algebras from Andruskiewitch-Schneider's  Hopf algebras, and found  a quasi-version $\widetilde{u}_q(\mathfrak{sl}_2)$ of the small quantum group $u_q(\mathfrak{sl}_2)$,  called the small quasi-quantum group.  Since this small quasi-quantum group does not come from a direct twisting of the small quantum group, we wonder if they have the same Green ring.
This lead us to study the representation theory of the small quantum group $\overline{U}_q:=u_q(\mathfrak{sl}_2)$ and the representation theory of the small quasi-quantum group $\widetilde{U}_q:=\widetilde{u}_q(\mathfrak{sl}_2)$ for $|q|>2$ being odd.  It turns out that the two do not have the same Green ring, but have the same stable Green ring.

The paper is organized as follows. In Section 2, we first recall some concepts, the structures of $\overline{U}_q$ and $\widetilde{U}_q$. Then we give a set of central orthogonal idempotents $\{e_0, e_1, \cdots, e_{n-1}\}$ of $\widetilde{U}_q$ such that  $\sum_{i=0}^{n-1}e_i=1$. Thus, $\widetilde{U}_q$ is decomposed into a direct product of algebras $\widetilde{U}_qe_i$, i.e., $\widetilde{U}_q=\widetilde{U}_qe_0\times \widetilde{U}_qe_1\times\cdots\times \widetilde{U}_qe_{n-1}$. In Section 3, we first describe the finite dimensional indecomposable $\overline{U}_q$-modules. Then we find out all indecomposable modules over $\widetilde{U}_qe_i$ for each $0\<i\<n$ so that we can classify the indecomposable $\widetilde{U}_q$-modules. In Section 4, we investigate the tensor products of finite dimensional indecomposable $\widetilde{U}_q$-modules, and give the decomposition rules for all such tensor product modules. It is shown that the Green ring $r(\overline{U}_q)$ is isomorphic to a subring of the Green ring $r(\widetilde{U}_q)$, and that the stable Green rings $r_{st}(\widetilde{U}_q)$ and $r_{st}(\overline{U}_q)$ are isomorphic. In Section 5, we describe the structures of the Green rings $r(\overline{U}_q)$ and $r(\widetilde{U}_q)$, and the projective class ring $r_p(\widetilde{U}_q)$, respectively. It is shown that $r_p(\widetilde{U}_q)$ is generated by $2n+1$ elements subject to certain relations, but $r(\overline{U}_q)$ and $r(\widetilde{U}_q)$ are generated by infinitely many elements subject to certain relations, respectively.

\section{\bf Preliminaries}\selabel{2}

Throughout, we work over an algebraically closed field $\Bbbk$ with char$(\Bbbk)$=0. Unless
otherwise stated, all algebras and Hopf algebras are
defined over $\Bbbk$; all modules are finite dimensional and left modules;
dim and $\otimes$ denote ${\rm dim}_{\Bbbk}$ and $\otimes_{\Bbbk}$,
respectively. For basic concepts and notations about representation theory and Hopf algebras, we refer to \cite{AusReiSma, Ka, Mo}.
In particular, for a Hopf algebra, we will use $\ep$, $\D$ and $S$ to denote the counit,
comultiplication and antipode, respectively.
Let $\mathbb Z$ denote the set of all integers, $\mathbb Z^{+}$ denote the set of all positive integers
and ${\mathbb Z}_n={\mathbb Z}/n{\mathbb Z}$ for an integer $n$.
For any real number $t$, let $[t]$ be the integer part of $t$, That is, $[t]$ is maximal integer such that $[t]\<t$.

Let $0\not=q\in \Bbbk$. For any nonnegative integer $n$, define $(n)_q$ by $(0)_q=0$ and $(n)_q=1+q+\cdots +q^{n-1}$ for $n>0$.
Observe that $(n)_q=n$ when $q=1$, and
$$\begin{array}{c}
(n)_q=\frac{q^n-1}{q-1}
\end{array}$$
when $q\not= 1$.
\subsection{Green ring and stable Green ring}\selabel{2.1}
~~

For an algebra $A$, we denote by $\mathrm{Mod}A$ the category of finite dimensional $A$-modules.
For any $M\in \mathrm{Mod}A$ and any $n\in\mathbb Z$ with $n\>0$, $nM$ stand for the
direct sum of $n$ copies of $M$. Thus $nM=0$ if $n=0$. Let $P(M)$ denote the projective
cover of $M$.

The Green ring $r(H)$ of a Hopf algebra $H$ is defined to be the abelian group generated by the
isomorphism classes $[V]$ of $V$ in $\mathrm{Mod}H$
modulo the relations $[U\oplus V]=[U]+[V]$, $U, V\in\mathrm{Mod}H$. The multiplication of $r(H)$
is determined by $[U][V]=[U\ot V]$, the tensor product of $H$-modules. Then $r(H)$ is an associative ring with the identity $[\Bbbk]$. The projective class ring $r_p(H)$ of $H$ is defined to be the subring of $r(H)$ generated by
projective modules and simple modules (see \cite{Cib99}).
Notice that $r(H)$ is a free abelian group with a $\mathbb Z$-basis
$\{[V]|V\in{\rm Ind}(H)\}$, where ${\rm Ind}(H)$ denotes the category
of indecomposable objects in $\mathrm{Mod}H$.

Recall that the stable module category $\mathrm{\underline {Mod}}H$ is the quotient category of $\mathrm{Mod}H$ modulo the morphisms factoring through the projective modules. This category is triangulated with the monoidal structure derived from that of $\mathrm{Mod}H$. The Green ring of the stable module category $\mathrm{\underline {Mod}}H$, denoted $r_{st}(H)$, is called stable Green ring of $H$.
\subsection{Small quantum group $\overline{U}_q$}\label{C1.1.1}
~~
Let $q\in \Bbbk$ with $q\neq\pm 1$.
The quantum enveloping algebra  $U_q=U_q(\mathfrak{sl}_2)$ of Lie algebra $\mathfrak{sl}_2$
is generated, as an algebra, by $E$, $F$, $K$ and $K^{-1}$ subject to the following relations:
$$\begin{array}{c}
KEK^{-1}=q^2E, \ KFK^{-1}=q^{-2}F, \ [E,F]=\frac{K-K^{-1}}{q-q^{-1}}, \ KK^{-1}=K^{-1}K=1.
\end{array}$$
$U_q$ is a Hopf algebra with the coalgebra structure and the antipode given by
$$\begin{array}{c}
\bigtriangleup(K)=K\otimes K, \ \bigtriangleup(E)=E\otimes K+1\otimes E, \ \bigtriangleup(F)=F\otimes 1+K^{-1}\otimes F,\\
\varepsilon(K)=1, \ \varepsilon(E)=\varepsilon(F)=0, \ S(K)=K^{-1}, \ S(E)=-EK^{-1}, \ S(F)=-KF.\\
\end{array}$$
Note that $K$ and $K^{-1}$ are group-like elements and $E,\ F$ are skew-primitive elements.

Now let $q\in \Bbbk$ be a root of unity with the order $|q|>2$.
Set $n=|q|$ if $|q|$ is odd, and $n=\frac{|q|}{2}$ if $|q|$ is even.
Then $q^2$ is a primitive $n^{th}$ root of unity.
Let $I$ be the ideal of $U_q$ generated by $E^n$, $F^n$ and $K^n-1$.
Then $I$ is a Hopf ideal of $U_q$ and hence one gets a quotient Hopf algebra $\overline{U}_q=U_q/I$,
called the {{small quantum group}}. For the details, one can refer to \cite{Ka}.
When $|q|=2n$, the ideal $I'$ of $U_q$ generated by $E^n$, $F^n$ and $K^{2n}-1$
is also a Hopf ideal of $U_q$, and hence one can form another quotient Hopf algebra
$\widehat{U}_q=U_q/I'$. In this case, $\ol{U}_q$ is a quotient Hopf algebra of
$\widehat{U}_q$. Therefore, the category $\mathrm{Mod}\ol{U}_q$ can be regarded
as a monoidal full subcategory of the category $\mathrm{Mod}\widehat{U}_q$,
and the Green ring $r(\ol{U}_q)$ is a subring of the Green ring $r(\widehat{U}_q)$.
The finite dimensional indecomposable $\widehat{U}_q$-modules are classified in \cite{R.S}
and the decomposition rules of these modules are given in \cite{H.Y}.
The Green ring $r(\widehat{U}_q)$ is computed in \cite{SuYang}.
By \cite{H.Y}, $r(\widehat{U}_q)$ is not a commutative ring.
However, the category $\mathrm{Mod}\ol{U}_q$ is braided, and hence
the Green ring $r(\ol{U}_q)$ is commutative.
\subsection{Small quasi-quantum group $\widetilde{U}_q$}\label{C1.1.4}
Let $q\in \Bbbk$ with $q\neq\pm 1$ and let $U_q=U_q(\mathfrak{sl}_2)$ be the quantum enveloping algebra of Lie algebra $\mathfrak{sl}_2$ given as in the last subsection.
Let $n>2$ and $d$ be two positive odd integers. Let $m=nd$ and $G=\mathbb{Z}_m=\langle K\rangle$ be the cyclic group of order $m$ generated by $K$. Fix a primitive $m^{th}$-root $\xi_m$ of unity in $\Bbbk$. Let $\mathbf{m}=m^2$ and let $\xi_{\mathbf{m}}$ be a primitive $\mathbf{m}^{th}$-root of unity such that  $\xi_{\mathbf{m}}^m=\xi_m$.

In the next, let $q=\xi^d_m$. Then $q$ is a primitive $n^{th}$ root of unity. Let $\mathbb{G}=\mathbb{Z}_{\mathbf{m}}=\langle\mathbf{K}\rangle$ and $K=\mathbf{K}^m$. Let $h_1=h_2=\mathbf{K}^m\in\mathbb{G}$, $\chi_1(\mathbf{K})=\xi_{\mathbf{m}}^{2d}\in \hat{\mathbb{G}}$, and $\chi_2(\mathbf{K})=\xi_{\mathbf{m}}^{-2d}\in \hat{\mathbb{G}}$, where $\hat{\mathbb{G}}$ is the character group of $\mathbb{G}$. Let $W=\{kn|0\<k<d\}$. By \cite[Difiniteion 5.1]{YY}, for any fixed $c\in W$ and $\l=\frac{1}{q^{-1}-q}\in \Bbbk$, one can construct a quasi-Hopf algebra, called a small quasi-quantum group, we denote by $\widetilde{U}_q$. Moreover, by \cite[Proposition 5.2]{YY}, the Hopf algebra structure of $\widetilde{U}_q$ is given as follows.

For the fixed $c\in W$,
define the functions $\Phi_1, \Phi_2: G\times G\rightarrow\Bbbk^*$, $\Upsilon, \mathbf{F}_1, \mathbf{F}_2: G\rightarrow\Bbbk^*$ and $\phi_{c}: G\times G\times G\rightarrow\Bbbk^*$, respectively, by
$$\begin{array}{lll}
\Phi_1(K^s,K^r)=\xi^{cr\varphi_{11}(K^s)}_{\mathbf{m}},&
\Phi_2(K^s,K^r)=\xi^{cr\varphi_{21}(K^s)}_{\mathbf{m}},&
\Upsilon(K^s)=\xi^{-csm}_{\mathbf{m}},\\
\mathbf{F}_1(K^s)=\xi^{-c(s-2d)\varphi_{11}(K^s)}_{\mathbf{m}},&
\mathbf{F}_2(K^s)=\xi^{-c(s+2d)\varphi_{21}(K^s)}_{\mathbf{m}},&\\
\phi_{c}(K^s,K^t,K^r)=\xi^{cr[\frac{s+t}{m}]}_m.&&\\
\end{array}$$
where  $0\<s,t,r<m$, $\varphi_{11}(K^s)=\overline{(s-2d)}-(s-2d)$, $\varphi_{21}(K^s)=\overline{(s+2d)}-(s+2d)$, and $\overline{(s\pm2d)}$ is the remainder of $(s\pm2d)$ divided by $m$.

For any $g=K^{t}\in G$ with $0\<t<m$, define a character $\chi_{g}:G\rightarrow {\Bbbk}^*$ by
$$\chi_g(K^r)=\xi^{tr}_m,\ 0\<r<m,$$
and let $1_g=\frac{1}{|G|}\sum_{h\in G}\chi_{g}(h)h\in\Bbbk G$. Let
$I$ be the ideal of $U_q$ generated by $E^n$, $F^n$ and $K^{dn}-1$, and $\widetilde{U}_q:=U_q/I$ the corresponding  quotient algebra. Then $\Bbbk G\subset\widetilde{U}_q$. It is easy to see that dim$(\widetilde{U}_q)=dn^3$.  One can check that $\widetilde{U}_q=U_q/I$ is a quasi-Hopf algebra with the comultiplication, the counit given by
$$\begin{array}{ll}
\Delta(E)=\sum_{f,g\in G}\Phi_1(f,g)E1_f\otimes 1_g+K\otimes E,&\varepsilon(E)=0,\\
\Delta(F)=\sum_{f,g\in G}\Phi_2(f,g)\chi_g(K)F1_f\otimes 1_g+1\otimes F,&\varepsilon(F)=0,\\
\Delta(K)=K\otimes K,&\varepsilon(K)=1.\\
\end{array}$$
The antipode  $(S,\alpha,1)$ is given by
$\alpha=\sum_{g\in G}\Upsilon(g)1_g$, and
$$\begin{array}{l}
S(F)=\chi^{-1}(K)\sum_{g\in G}\chi_g(K)\mathbf{F}_2(g)F 1_g,\
S(E)=\sum_{g\in G}\mathbf{F}_1(g)E1_g, \ S(K)=K^{-1}.
\end{array}$$
Moreover, the associator of $\widetilde{U}_q$ is given by
$$\Phi_c=\Sigma_{f,g,h\in G}\phi_c(f,g,h)(1_f\otimes 1_g\otimes 1_h).$$

Throughout, unless otherwise stated, we assume $c=d=n>2$ to be odd, and let $H=\widetilde{U}_q$ be the small quasi-quantum groups given above. Then $m=n^2$. Let $\mathbf{q}=\xi_{m}$.  Then  $\mathbf{q}$ is a primitive $m^{th}$-root of unity and $q=\mathbf{q}^n$.

For any $0\<i\<n-1$, let $e_i=\frac{1}{n}\sum_{t=0}^{n-1}q^{ti}K^{tn}.$
Then one can easily check that $\{e_i|0\<i\<n-1\}$ forms  a set of orthogonal central idempotents, and $\sum_{i=0}^{n-1} e_i=1$. Thus $H\cong He_0\times He_1\times\cdots\times He_{n-1}$ as algebras.
Let $E_i:=Ee_i, F_i:=Fe_i, K_i:=Ke_i$ in $He_i$, $0\<i\<n$. Then we have the following Lemma.
\begin{lemma}\label{5.3.1}
For any $0\<i\<n-1$, $He_i$ is generated, as an algebra, by $E_i,F_i,K_i$ subject to following relations
$$\begin{array}{c}
K_iE_iK^{-1}_i=q^2E_i, \ K_iF_iK^{-1}_i=q^{-2}F_i, \ [E_i,F_i]=\frac{K_i-K^{-1}_i}{q-q^{-1}}, \\ K_iK^{-1}_i=K^{-1}_iK_i=e_i, \
E^n_i=F^n_i=0,K^n_i=q^{n-i}e_i,
\end{array}$$
where $e_i$ is the identity of $He_i$.
\end{lemma}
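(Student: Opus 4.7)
The plan is in three steps: verify the generation and the listed relations, compute $\dim He_i$ via a projection of the PBW basis, and then use a PBW straightening in the abstract algebra to show these relations are defining.

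Since $H$ is generated by $E,F,K^{\pm 1}$ and $e_i$ is central with $e_i^2 = e_i$, the algebra $He_i$ (whose unit is $e_i$) is generated by $E_i, F_i, K_i^{\pm 1}$. Multiplying each defining relation of $H$ by $e_i$ yields all the listed relations except the last; for example $[E_i, F_i] = [E, F]e_i = \frac{K-K^{-1}}{q-q^{-1}}e_i$, and $E_i^n = E^n e_i = 0$. For $K_i^n = q^{n-i}e_i$, I would substitute $e_i = \frac{1}{n}\sum_{t=0}^{n-1}q^{ti}K^{tn}$, reindex, and use $K^{n^2}=1$ together with $q^n = 1$:
\[K^n e_i = \frac{1}{n}\sum_{t=0}^{n-1}q^{ti}K^{(t+1)n} = q^{-i}\cdot\frac{1}{n}\sum_{s=0}^{n-1}q^{si}K^{sn} = q^{-i}e_i = q^{n-i}e_i.\]

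Next I would establish $\dim He_i = n^3$. The PBW basis $\{F^a K^b E^c : 0 \leq a, c < n,\ 0 \leq b < n^2\}$ of $H = U_q/I$ gives $\dim H = n^4$. Because $q^{2n} = 1$, the element $K^n$ is central in $H$; writing $b = sn + r$ with $0 \leq s, r < n$ and using $K^{sn}e_i = q^{-si}e_i$, each such basis element projects into $He_i$ as a scalar multiple of $F_i^a K_i^r E_i^c$. Thus $He_i$ is spanned by $\{F_i^a K_i^b E_i^c : 0 \leq a, b, c < n\}$, so $\dim He_i \leq n^3$. Summing over $i$ and using $n^4 = n\cdot n^3$ force equality $\dim He_i = n^3$ for every $i$.

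Finally, let $A_i$ denote the abstract $\Bbbk$-algebra presented by the listed generators and relations; the verifications above supply a surjective algebra map $\pi: A_i \twoheadrightarrow He_i$. A standard PBW-type straightening inside $A_i$---first moving $K_i^{\pm 1}$ to the middle via the $q$-commutations $K_i E_i = q^2 E_i K_i$ and $K_i F_i = q^{-2}F_i K_i$, then using $[E_i, F_i] = \frac{K_i - K_i^{-1}}{q - q^{-1}}$ to place $F_i$'s to the left of $E_i$'s modulo lower-degree terms---writes every element as a $\Bbbk$-combination of monomials $F_i^a K_i^b E_i^c$, and the relations $E_i^n = F_i^n = 0$ and $K_i^n = q^{n-i}e_i$ restrict $a, b, c$ to $\{0, \ldots, n-1\}$. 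Hence $\dim A_i \leq n^3 = \dim He_i$, which combined with the surjection forces $\pi$ to be an isomorphism. The main obstacle is this PBW straightening: one must check the given relations suffice for the normal-form reduction. Since only exponents $< n$ appear after reduction, the scalar $q^{n-i}$ (replacing the familiar $K^n = 1$ of the small quantum group $\overline{U}_q$) does not obstruct the argument, and the reasoning mirrors the classical $U_q(\mathfrak{sl}_2)$ case.
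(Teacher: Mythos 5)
Your proposal is correct and follows essentially the same route as the paper: verify the generators and relations in $He_i$, deduce $\dim He_i\leqslant n^3$ from the presented algebra, and use $\dim H=n^4$ together with the decomposition $H\cong He_0\times\cdots\times He_{n-1}$ to force equality, whence the surjection from the presented algebra is an isomorphism. You merely spell out the details (the computation of $K^ne_i=q^{n-i}e_i$, the PBW straightening) that the paper leaves as ``a straightforward computation.''
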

\begin{proof}
Since $H$ is generated, as an algebra, by $E, F$ and $K$, $He_i$ is generated by $E_i, F_i, K_i$. A straightforward computation shows that the relations in the lemma are satisfied in $He_i$. Hence dim$(He_i)\<n^3$. Since $H\cong He_0\times He_1\times\cdots\times He_{n-1}$ as algebras, it follows from dim$(H)=n^4$ that dim$(He_i)=n^3$ for all $0\<i\<n-1$.
Thus, the lemma follows.
\end{proof}

\begin{lemma}\label{5.3.4}
Let $0\<i\<n-1$ and $1\<r\<n-1$. Then $$\begin{array}{c}
F_iE^r_i=E^r_iF_i+E^{r-1}_i(\frac{1-q^{-2r}}{(1-q^{-2})(q-q^{-1})}K^{-1}_i-\frac{1-q^{2r}}{(1-q^2)(q-q^{-1})}K_i).
\end{array}$$
Consequently,
$$\begin{array}{c}
FE^r=E^rF+E^{r-1}(\frac{1-q^{-2r}}{(1-q^{-2})(q-q^{-1})}K^{-1}-\frac{1-q^{2r}}{(1-q^2)(q-q^{-1})}K).
\end{array}$$
\end{lemma}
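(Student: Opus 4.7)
The plan is to prove the first identity by induction on $r$, then deduce the second as an immediate consequence by summing over $i$.

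For the base case $r=1$, I would simply check that the stated formula collapses to the defining relation $[E_i,F_i]=\frac{K_i-K_i^{-1}}{q-q^{-1}}$ from \leref{5.3.1}. Indeed, the factor $E_i^{r-1}=e_i$ acts as the identity of $He_i$, and the bracketed expression simplifies to $\frac{1}{q-q^{-1}}K_i^{-1}-\frac{1}{q-q^{-1}}K_i=-\frac{K_i-K_i^{-1}}{q-q^{-1}}$, giving $F_iE_i=E_iF_i-[E_i,F_i]$ as required.

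For the inductive step, assuming the formula for $r$, I would write $F_iE_i^{r+1}=(F_iE_i^r)E_i$, substitute the inductive hypothesis, and then push the trailing $E_i$ through the $K_i^{\pm1}$ terms using $K_iE_i=q^2E_iK_i$ and $K_i^{-1}E_i=q^{-2}E_iK_i^{-1}$. The term $E_i^rF_iE_i$ becomes $E_i^r(E_iF_i-\frac{K_i-K_i^{-1}}{q-q^{-1}})=E_i^{r+1}F_i-\frac{1}{q-q^{-1}}E_i^r(K_i-K_i^{-1})$. Collecting the three correction terms produces the coefficient of $E_i^rK_i^{-1}$ equal to $\frac{1-q^{-2r}}{(1-q^{-2})(q-q^{-1})}q^{-2}+\frac{1}{q-q^{-1}}$, which must equal $\frac{1-q^{-2(r+1)}}{(1-q^{-2})(q-q^{-1})}$. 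This is the routine geometric-series identity
\[
\frac{(1-q^{-2r})q^{-2}+(1-q^{-2})}{(1-q^{-2})(q-q^{-1})}=\frac{1-q^{-2(r+1)}}{(1-q^{-2})(q-q^{-1})},
\]
and the coefficient of $E_i^rK_i$ simplifies symmetrically using $q^2$ in place of $q^{-2}$. This is the only place where any real bookkeeping occurs; it is the main (and only mild) obstacle.

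For the ``consequently'' clause, I would use that $\{e_i\}$ is a set of central orthogonal idempotents with $\sum_{i=0}^{n-1}e_i=1$, so for any word $X$ in $E,F,K,K^{-1}$ we have $X=\sum_iXe_i$, and since $e_i$ is central and idempotent, $E^re_i=E_i^r$, $Fe_i=F_i$, $K^{\pm1}e_i=K_i^{\pm1}$. Multiplying the proven identity by $e_i$ (it already holds inside $He_i$), summing over $i$, and using centrality then yields the stated formula in $H$ verbatim, with $E_i,F_i,K_i^{\pm1}$ replaced by $E,F,K^{\pm1}$.

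I do not foresee any conceptual difficulty; the whole lemma is a standard quantum-$\mathfrak{sl}_2$ commutation computation, and once $He_i$ is presented as in \leref{5.3.1} the proof is identical to the classical one for $U_q(\mathfrak{sl}_2)$.
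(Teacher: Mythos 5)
Your proposal is correct and follows exactly the route the paper takes: the paper's proof is literally ``It follows from Lemma \ref{5.3.1} and the induction on $r$,'' and your base case, inductive step (with the correct geometric-series recursion for the coefficients of $K_i^{\pm 1}$), and the passage from $He_i$ to $H$ via the central idempotents fill in precisely that argument.
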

\begin{proof}
It follows from Lemma \ref{5.3.1} and the induction on $r$.
\end{proof}

Denote by Ind$H$ the category of indecomposable $H$-modules. Since $H\cong He_0\times He_1\times\cdots\times He_{n-1}$ as algebras, $\mathrm{Mod}He_i$ can be naturally regarded as a full subcatefory of $\mathrm{Mod}H$. Hence the category Ind$He_i$ of indecomposable $He_i$-modules is a subcatefory of Ind$H$, $0\<i\<n-1$. Thus, we have the following corollary.
\begin{corollary}\label{5.3.2}
Let $M$ be an $H$-module.
\begin{enumerate}
	\item[(1)] $M\in$ {\rm Ind}$H$ if and only if $M\in $ {\rm Ind}$He_i$ for some $0\<i\<n-1$.
    \item[(2)] $M$ is a simple $H$-module if and only if $M$ is a simple $He_i$-module for some $0\<i\<n-1$.
\end{enumerate}
\end{corollary}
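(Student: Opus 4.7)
The plan is to exploit the algebra decomposition $H \cong He_0 \times He_1 \times \cdots \times He_{n-1}$, which makes every $H$-module split canonically along the central idempotents $e_0, \dots, e_{n-1}$. Specifically, for any $H$-module $M$ I would write $M = e_0 M \oplus e_1 M \oplus \cdots \oplus e_{n-1} M$ using $\sum_i e_i = 1$ together with the orthogonality $e_i e_j = \delta_{ij} e_i$. Each summand $e_i M$ is stable under the action of $H$ because the $e_i$ are central, and in fact the action of $H$ on $e_i M$ factors through $He_i$ (elements of $He_j$ with $j \neq i$ act as zero). Conversely, any $He_i$-module becomes an $H$-module by letting the other factors act as zero, and the categories $\mathrm{Mod}He_i$ embed as full subcategories of $\mathrm{Mod}H$.

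For part (1), if $M$ lies in some $\mathrm{Ind}He_i$, then it is already indecomposable as an $H$-module because any $H$-submodule direct summand is automatically an $He_i$-submodule direct summand (the other $e_j$ act as zero). Conversely, if $M$ is indecomposable as an $H$-module, then the decomposition $M = \bigoplus_{i=0}^{n-1} e_i M$ shows that exactly one summand $e_i M$ is nonzero, so $M = e_i M$ is an $He_i$-module; and it remains indecomposable in $\mathrm{Mod}He_i$ since any decomposition there is also a decomposition in $\mathrm{Mod}H$.

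Part (2) follows by the same argument applied to submodules rather than direct summands: any $He_i$-submodule of a module supported on $e_i$ is automatically an $H$-submodule and vice versa, so simplicity in the two module categories coincides, and a simple $H$-module must be annihilated by all but one of the central idempotents.

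There is no real obstacle here; the whole statement is a formal consequence of the finite product decomposition of $H$ into the block algebras $He_i$, so the proof is essentially a bookkeeping exercise with the central idempotents $e_i$ provided by Lemma~\ref{5.3.1} and the preceding discussion.
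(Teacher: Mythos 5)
Your proof is correct and follows the same route as the paper, which simply derives the corollary from the decomposition $H\cong He_0\times He_1\times\cdots\times He_{n-1}$ via the central orthogonal idempotents $e_i$; you have merely written out the standard bookkeeping that the paper leaves implicit.
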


\section{\bf Indecomposable $H$-modules}\selabel{5}
In this section, we  investigate all indecomposable modules over  $H$. For an $H$-module $M$ and $x\in M$, let $\langle x\rangle$ denote the submodule of $M$ generated by $x$.

Since $|q|=n$ is odd, it follows from Lemma \ref{5.3.1} that $He_0\cong \overline{U}_q$ as algebras. Precisely, there exists an algebra isomorphism $\psi$ from $He_0$ to $\overline{U}_q$ given by
$$\psi(K_0)=K,\ \psi(E_0)=E,\ \psi(F_0)=F,\ \psi(K^{-1}_0)=K^{-1}.$$
 Hence by the discussion above, $\mathrm{Mod}\overline{U}_q$ can be regarded as a subcategory of $\mathrm{Mod}H$. Thus, each $\overline{U}_q$-module is an $H$-module.

\subsection{Indecomposable $\overline {U}_q$-modules}\label{4.1} In this subsection, we investigate the indecomposable $\overline {U}_q$-modules. Throughout this subsection, assume that $n>2$ is odd and $q\in\Bbbk$ is a root of unity with $|q|=n$ or $|q|=2n$. Therefore, $q^2$ is a primitive $n^{th}$-root of unity. Thus, one can form a Hopf algebra $H_n(1,q^2)$ (see \cite{Ch1,Ch2}). By \cite[Proposition 4.5]{Andrea},
there is a Hopf algebra epimorphism $\phi: H_n(1, q^2)\rightarrow \overline{U}_q$
determined by
$$\phi(a)=E,\phi(b)=K,\phi(c)=K,\phi(d)=q^{-2}(q-q^{-1})FK$$
with the ${\rm Ker}(\phi)=(\Bbbk C)^{+}H_n(1,q^2)$, where $C$ is the group of central group-like elements in $H_n(1,q^2)$.
Therefore, $H_n(1,q^2)/(\Bbbk C)^{+}H_n(1,q^2)\cong \overline{U}_q$. Under this isomorphism, we identify $H_n(1,q^2)/(\Bbbk C)^{+}
H_n(1,q^2)$ with $\overline{U}_q$ in the sequel.
Thus, $\mathrm{Mod}\overline{U}_q$ can be regarded as a monoidal full subcategory of $\mathrm{Mod}H_n(1,q^2)$.
Note that $C$ is the cyclic group of order $n$ generated by $bc^{-1}$.
Therefore, a $\overline{U}_q$-module is exactly an $H_n(1,q^2)$-module $M$ satisfying $(b-c)M=0$.

By \cite{Ch4}, one can get the classification of the finite dimensional indecomposable
modules over $H_n(1,q^2)$, denoted by the same symbols.

From the structures of the finite dimensional indecomposable $H_n(1, q^2)$-modules,
one can see that a finite dimensional indecomposable $H_n(1, q^2)$-module $M$ satisfies
$(b-c)M=0$ if and only if each simple factor $S$ of $M$ satisfies $(b-c)S=0$
if and only if there is  a simple factor $S$ of $M$ such that $(b-c)S=0$.
In particular,  the $H_n(1, q^2)$-module $V(l,r)$ is a $\ol{U}_q$-module if and only if $n|2r+l-1$.
Thus, we can describe all finite dimensional indecomposable $\ol{U}_q$-modules.
 We refer to \cite{Ch4} for the notations of the indecomposable modules.
Let $\oo$ be a symbol with $\oo\not\in \Bbbk$ and let $\overline \Bbbk=\Bbbk \cup\{\oo\}$.

\begin{itemize}
\item Simple modules: $V_l:=V(l,\frac{3n}{4}+(-1)^{l-1}\frac{n}{4}-\frac{l-1}{2})$, $1\<l\<n$. Moreover, $V_n$ is both projective and injective.

\item Non-simple projective modules: $P_l:=P(V_l)=P(l,\frac{3n}{4}+(-1)^{l-1}\frac{n}{4}-\frac{l-1}{2})$, $1\<l\<n-1$.
Moreover, $P_l$ is also the injective envelope of $V_l$. Let $P_n=V_n$ in this case.

\item String modules: $\O^{\pm s}V_l$, $s\>1$,$1\<l\<n-1$.

\item Band modules: $M_s(l,\eta):=M_s(l,\frac{3n}{4}+(-1)^{l-1}\frac{n}{4}-\frac{l-1}{2},\eta )$, $1\<l\<n-1$, $s\geq 1$, $\eta \in \overline{\Bbbk}$.
\end{itemize}

\subsection{Simple $H$-modules}\label{5.2}
In this subsection, we investigate simple $H$-modules.
  From last subsection, we see that there are $n$ simple $He_0$-modules $V_l$, $1\<l\<n$. Hence via the algebra isomorphism $\psi$ form $He_0$ to $\overline{U}_q$, $V_l$ is an $l$-dimensional simple $H$-module with a standard $\Bbbk$-basis $\{m_1,m_2,\cdots,m_l\}$ such that the $H$-module action is determined by
\begin{equation*}
\begin{array}{ll}
Em_i=\left\{
\begin{array}{ll}
m_{i+1},& 1\<i<l,\\
0,& i=l,\\
\end{array}\right.&
Fm_i=\left\{
\begin{array}{ll}
0, & i=1,\\
\beta
_{i-1}(l)m_{i-1},& 1<i\<l,\\
\end{array}\right.\\
Km_i=q^{2i-l-1}m_i,\ 1\<i\<l,\\
\end{array}
\end{equation*}
where $\b_i(l)=\frac{\a_i(l)}{q^{2i-l}-q^{2i-l-2}}$ and $\a_{i}(l)=(i)_{q^2}(1-q^{2(i-l)})$, $1\<i\<l-1$. $V_1$ is trivial.

For any $1\<t\<n-1$ and $r\in \mathbb{Z}$, let ${\bf V}(t,r)$ be a vector space of dimension $n$ with a $\Bbbk$-basis $\{v_1,v_2,\cdots,v_n \}$. Then  a straightforward verification shows that ${\bf V}(t,r)$ is an $H$-module with the following action:
\begin{equation*}
\begin{array}{ll}
Ev_i=\left\{
\begin{array}{ll}
v_{i+1},& 1\<i<n,\\
0,& i=n,\\
\end{array}\right.&
Fv_i=\left\{
\begin{array}{ll}
0, & i=1,\\
\gamma_{i-1}v_{i-1},& 2\<i\<n,\\
\end{array}\right.\\
Kv_i=\mathbf{q}^tq^{2(r+i-1)}v_i,\ 1\<i\<n,\\
\end{array}
\end{equation*}
where $\gamma_j=\frac{1}{q-q^{-1}}(\mathbf{q}^{-t}\frac{q^{-2r}-q^{-2(r+j)}}{1-q^{-2}}-\mathbf{q}^t\frac{q^{2r}-q^{2(r+j)}}{1-q^2})\neq 0$
for $1\<j\<n-1$. Such a basis $\{v_1,v_2,\cdots,v_n \}$ is called a standard basis of ${\bf V}(t,r)$.

For any $H$-module $W$, let $W_F=\{w\in W|Fw=0\}$. If $W\neq 0$ then $W_F\neq 0$. Indeed, let $0\neq x \in W$. Then $F^nx=0$, and hence there is a positive integer $i\<n$ such that $F^ix=0$ and $F^{i-1}x\neq 0$. Thus, $F^{i-1}x\in W_F$, and so $W_F\neq 0$.

\begin{lemma}\label{5.2.2}
 Let $1\<t\<n-1$ and $r\in \mathbb{Z}$. Then ${\bf V}(t,r)$ is a simple $H$-module.
\end{lemma}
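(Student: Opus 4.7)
The plan is to exploit the fact that $K$ acts diagonally on the given basis with pairwise distinct eigenvalues, which forces every submodule to be spanned by a subset of the basis vectors; then the fact that the scalars $\gamma_j$ are all nonzero, together with the ladder action of $E$, lets a single basis vector generate the whole module.

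First, I would verify the eigenvalue separation. By construction $Kv_i = \mathbf{q}^t q^{2(r+i-1)} v_i$, and since $q = \mathbf{q}^n$, each eigenvalue can be rewritten as $\mathbf{q}^{t+2n(r+i-1)}$. Because $\mathbf{q}$ is a primitive $m$-th root of unity with $m = n^2$, an equality of eigenvalues for indices $i,j$ with $1 \leq i,j \leq n$ is equivalent to $2n(i-j) \equiv 0 \pmod{n^2}$, i.e.\ $i \equiv j \pmod{n/\gcd(2,n)}$. Since $n$ is odd, this forces $i = j$. Thus $v_1,\dots,v_n$ are $K$-eigenvectors with distinct eigenvalues, and any $K$-stable subspace of $\mathbf{V}(t,r)$ is a direct sum of some of the lines $\Bbbk v_i$.

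Next, let $W$ be a nonzero $H$-submodule of $\mathbf{V}(t,r)$. By the previous paragraph $W = \bigoplus_{i \in S} \Bbbk v_i$ for some nonempty $S \subseteq \{1,\dots,n\}$. Pick $i_0 \in S$. Applying $E$ repeatedly and using $Ev_i = v_{i+1}$ for $i<n$ shows $v_{i_0}, v_{i_0+1}, \dots, v_n$ all lie in $W$. Applying $F$ repeatedly and using $Fv_i = \gamma_{i-1}v_{i-1}$ together with the hypothesis $\gamma_j \neq 0$ for $1 \leq j \leq n-1$ shows $v_{i_0-1}, v_{i_0-2}, \dots, v_1$ also lie in $W$. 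Hence $W = \mathbf{V}(t,r)$, proving simplicity.

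The only delicate point is the eigenvalue-separation step, because it is the one place where the hypotheses ``$1 \leq t \leq n-1$'' and ``$n$ odd'' actually get used; everything else is purely formal from the module action. The role of $t \neq 0$ here is implicit in the way the exponent $t+2n(r+i-1)$ falls in a single congruence class mod $n^2$, but this is only needed to distinguish $\mathbf{V}(t,r)$ from the $He_0$-modules treated earlier, not for the simplicity argument per se. I expect the proof in the paper to be a one-liner essentially matching the above outline.
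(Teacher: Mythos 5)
Your proof is correct, but it follows a different route from the paper's. You decompose a nonzero submodule $W$ into $K$-eigenlines: since $Kv_i=\mathbf{q}^{t+2n(r+i-1)}v_i$ and the exponents are pairwise distinct modulo $n^2$ (here $n$ odd is used), $W$ must be spanned by a subset of the $v_i$, and then the $E$-ladder upward together with the $F$-ladder downward (using $\gamma_j\neq 0$) gives $W=\mathbf{V}(t,r)$. The paper instead works with $W_F=\{w\in W\mid Fw=0\}$: the paragraph preceding the lemma shows $W_F\neq 0$ for any nonzero submodule $W$, while $\mathbf{V}(t,r)_F=\Bbbk v_1$ is one-dimensional because all $\gamma_j\neq 0$; hence $v_1\in W$ and the $E$-ladder alone finishes. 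Both arguments rest on the same data ($\gamma_j\neq 0$ and the ladder action of $E$), but your key lemma is the separation of the $K$-eigenvalues, whereas the paper's is the one-dimensionality of the $F$-kernel; the latter avoids the eigenvalue computation and does not invoke the parity of $n$ at this step, while yours has the minor advantage of not needing the preliminary observation that $W\neq 0$ implies $W_F\neq 0$. One small correction to your closing remarks: the hypothesis $t\neq 0$ indeed plays no role in the eigenvalue-separation step (the exponent difference $2n(i-j)$ is independent of $t$), but it is not merely a labelling device either --- it is exactly what guarantees $\gamma_j\neq 0$ for all $1\leq j\leq n-1$, a fact both proofs rely on; so it does enter the simplicity argument, just through the $F$-action rather than through the $K$-spectrum.
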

\begin{proof}
It follows from the structure of the $H$-module ${\bf V}(t,r)$ that  ${\bf V}(t,r)_{F}=\Bbbk v_1$ is $1$-dimensional. Let $W$ be a nonzero $H$-submodule of ${\bf V}(t,r)$. Then $W_{F}\neq 0$ and $W_{F}\subseteq {\bf V}(t,r)_{F}$. Hence $W_{F}={\bf V}(t,r)_{F}=\Bbbk v_1$ and $v_1\in W$. This implies $v_i\in W$ for all $1\<i\<n$ and so $W={\bf V}(t,r)$. Hence ${\bf V}(t,r)$ is a simple $H$-module.
\end{proof}


\begin{proposition}\label{5.2.4}
 Let $1\<t,t'\<n-1$ and $r,r'\in\mathbb Z$. Then ${\bf V}(t,r)\cong {\bf V}(t',r')$ if and only if $t=t'$ and $r\equiv r'$ $({\rm mod }\ n)$.
\end{proposition}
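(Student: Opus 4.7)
The plan is as follows. For the ``if'' direction, the claim reduces to checking that when $t=t'$ and $r\equiv r'\pmod n$, the two modules are literally equal with respect to their prescribed standard bases. Since $|q|=n$, we have $q^{2(r'-r)}=1$, so the $K$-eigenvalues $\mathbf{q}^t q^{2(r+i-1)}$ and the coefficients $\gamma_j$ (which are rational expressions in $q^{\pm 2r}$, $q^{\pm 2(r+j)}$ and $\mathbf{q}^{\pm t}$) all coincide between $\mathbf{V}(t,r)$ and $\mathbf{V}(t',r')$. Hence the two modules have identical structure on the common underlying basis, which yields the isomorphism.

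For the ``only if'' direction, I would exploit the one-dimensional invariant subspace $\mathbf{V}(t,r)_F=\Bbbk v_1$ singled out in the proof of Lemma~\ref{5.2.2}. Since any $H$-module isomorphism preserves the kernel of $F$, an isomorphism $\phi\colon\mathbf{V}(t,r)\to\mathbf{V}(t',r')$ must send $v_1$ to a nonzero scalar multiple of $v_1'$. Comparing the $K$-eigenvalue on these lines gives the scalar identity $\mathbf{q}^t q^{2r}=\mathbf{q}^{t'} q^{2r'}$. Substituting $q=\mathbf{q}^n$ and using that $\mathbf{q}$ has order $m=n^2$, this translates into the single congruence
$$
t+2nr\equiv t'+2nr'\pmod{n^2}.
$$

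Reducing this congruence modulo $n$ yields $t\equiv t'\pmod n$, and since $t,t'\in\{1,\dots,n-1\}$ we obtain $t=t'$. The remaining congruence $2nr\equiv 2nr'\pmod{n^2}$ simplifies to $2r\equiv 2r'\pmod n$, and because $n$ is odd the element $2$ is invertible modulo $n$, so $r\equiv r'\pmod n$. There is no real obstacle: the whole argument hinges on correctly tracking the two moduli $n$ and $m=n^2$ at play and on using the parity of $n$ to invert $2$. The crucial input is already provided by Lemma~\ref{5.2.2}, which isolates a canonical one-dimensional $K$-stable subspace on which both parameters $t$ and $r$ can be read off.
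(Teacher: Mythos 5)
Your proposal is correct and follows essentially the same route as the paper: both directions rest on the one-dimensional subspace $\mathbf{V}(t,r)_F$ from Lemma~\ref{5.2.2} and the comparison of $K$-eigenvalues $\mathbf{q}^tq^{2r}=\mathbf{q}^{t'}q^{2r'}$. You merely spell out the final arithmetic step (reducing $t+2nr\equiv t'+2nr'\pmod{n^2}$ modulo $n$ and inverting $2$ using that $n$ is odd), which the paper leaves implicit.
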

\begin{proof}
If $t=t'$ and $r\equiv r'$ $({\rm mod }\ n)$, then  ${\bf V}(t,r)\cong {\bf V}(t',r')$ is obvious. Conversely, assume that $\phi: {\bf V}(t,r)\rightarrow{\bf V}(t',r')$ is an $H$-module isomorphism. Then $\phi({\bf V}(t,r)_F)={\bf V}(t',r')_F$. Let $0\neq v\in {\bf V}(t,r)_F$. Then $0\neq\phi(v)\in{\bf V}(t',r')_F$. By the proof of Lemma \ref{5.2.2}, $Kv=\mathbf{q}^tq^{2r}v$ and $K\phi(v)=\mathbf{q}^{t'}q^{2r'}\phi(v)$. It follows from $\phi(Kv)=K\phi(v)$ that $\mathbf{q}^tq^{2r}=\mathbf{q}^{t'}q^{2r'}$, which implies $t=t'$ and $r\equiv r'$ $({\rm mod}\ n)$.
\end{proof}

\begin{proposition}\label{5.2.5}
Let $V$ be a simple $H$-module. Then $V$ is isomorphic to $V_l$ or ${\bf V}(t,r)$ for some $1\<l\<n$, $1\<t\<n-1$ and $r\in\mathbb Z$.
\end{proposition}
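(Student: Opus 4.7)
The plan exploits the algebra decomposition $H\cong He_0\times He_1\times\cdots\times He_{n-1}$. By Corollary~\ref{5.3.2}, a simple $H$-module $V$ is a simple $He_i$-module for a unique $0\leq i\leq n-1$; and when $i=0$, the algebra isomorphism $\psi\colon He_0\to\overline{U}_q$ identifies $V$ with a simple $\overline{U}_q$-module, which is some $V_l$ with $1\leq l\leq n$ by the classification recalled in Section~\ref{4.1}. So the substantive case is $1\leq i\leq n-1$, for which I will show $V\cong\mathbf{V}(n-i,r)$ for some $r\in\mathbb{Z}$; set $t:=n-i\in\{1,\ldots,n-1\}$.

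First I would show that $K$ acts diagonalizably on $V$ with eigenvalues in $\{\mathbf{q}^{t}q^{2s}\mid 0\leq s\leq n-1\}$. By Lemma~\ref{5.3.1}, $K^n=q^{n-i}$ on $V$, and the polynomial $X^n-q^{n-i}$ has the $n$ distinct roots $\mathbf{q}^{t}q^{2s}$ (using $\mathbf{q}^n=q$ and that $q^2$ generates the group of $n$-th roots of unity). The relation $KF=q^{-2}FK$ shows $V_F$ is $K$-stable, and $V_F\neq 0$ by the paragraph preceding Lemma~\ref{5.2.2}; hence I can pick a nonzero $K$-eigenvector $v_1\in V_F$ and write $Kv_1=\mu v_1$ with $\mu=\mathbf{q}^{t}q^{2r}$ for some $r\in\mathbb{Z}$.

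Next, set $v_j:=E^{j-1}v_1$. The relation $KE=q^2EK$ gives $Kv_j=\mathbf{q}^{t}q^{2(r+j-1)}v_j$. Expanding $FE^{j-1}v_1$ via Lemma~\ref{5.3.4} and using $Fv_1=0$ yields $Fv_j=\gamma_{j-1}v_{j-1}$, where $\gamma_{j-1}$ matches exactly the coefficient appearing in the definition of $\mathbf{V}(t,r)$ given before Lemma~\ref{5.2.2}. Therefore $\sum_j\Bbbk v_j$ is an $H$-submodule of $V$, which by simplicity equals $V$.

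The main obstacle I foresee is forcing $\dim V=n$. Let $d$ be the smallest positive integer with $E^{d}v_1=0$, so $d\leq n$ because $E^n=0$ in $H$, and $\dim V=d$. Applying Lemma~\ref{5.3.4} with exponent $d$ to $v_1$, combining $Fv_1=0$ with $E^{d}v_1=0$, and cancelling the nonzero $v_d$, one obtains via the identity $\frac{1-q^{-2d}}{1-q^{-2}}=q^{-2(d-1)}\frac{1-q^{2d}}{1-q^{2}}$ the requirement
\[
\mu^{2}=q^{-2(d-1)}
\]
whenever $d<n$ (for $d=n$ both numerators vanish, imposing no constraint, consistent with $E^n=0$). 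Raising to the $n$-th power forces $\mu^{2n}=1$, while $\mu^{n}=q^{n-i}$ gives $\mu^{2n}=q^{-2i}$, which is a \emph{nontrivial} $n$-th root of unity because $n$ is odd and $1\leq i\leq n-1$ together imply $n\nmid 2i$. This contradiction rules out $d<n$, forcing $d=n$ and exhibiting the $H$-module isomorphism $V\cong\mathbf{V}(n-i,r)$ via the basis $v_1,\ldots,v_n$.
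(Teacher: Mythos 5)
Your proof is correct, and while its engine coincides with the paper's --- pick a $K$-eigenvector $v_1\in V_F$, run the string $v_j=E^{j-1}v_1$, read off the $F$-action from Lemma \ref{5.3.4}, and extract a constraint on the $K$-eigenvalue $\mu$ from $E^dv_1=0$ --- you resolve that constraint by a genuinely different route. The paper works monolithically in $H$: it writes $Kv_1=\mathbf{q}^sv_1$ using $K^{n^2}=1$, obtains $\mathbf{q}^{2s}(q^{2l}-1)=q^{2-2l}(q^{2l}-1)$, and then sorts the possibilities by hand, with the subcase $l=n$ requiring the non-vanishing of all $\theta_i$ (forced by simplicity) and a further split on the residue of $s$ modulo $n$ and the parity of $k$. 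You instead front-load the block decomposition of Corollary \ref{5.3.2}: the $He_0$ block is settled by the isomorphism $He_0\cong\overline{U}_q$ and the known list of simple $\overline{U}_q$-modules (which the paper itself quotes at the start of Subsection 3.2, so this is a legitimate appeal), and for $He_i$ with $i\ge 1$ the relation $K^n_i=q^{n-i}e_i$ of Lemma \ref{5.3.1} pins the eigenvalue down to $\mu=\mathbf{q}^{n-i}q^{2r}$ and turns the top-of-string constraint $\mu^2=q^{-2(d-1)}$ into the clash $\mu^{2n}=1$ versus $\mu^{2n}=q^{-2i}\neq 1$ (using that $n$ is odd), killing $d<n$ in one stroke. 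Your organization buys a cleaner dichotomy between the $V_l$ and the $\mathbf{V}(t,r)$ and avoids the paper's case analysis on $s$; the paper's version is self-contained in that it re-derives the $V_l$ case rather than citing it. The only points left tacit in your write-up --- linear independence of $v_1,\dots,v_d$ (immediate, as they are $K$-eigenvectors for distinct eigenvalues) and the matching of your $\gamma_j$ with the structure constants of $\mathbf{V}(t,r)$ --- check out.
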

\begin{proof}
Let $V$ be a simple $H$-module, and let $\phi:$ $H\longrightarrow {\rm End}(V)$ be the corresponding algebra map. One can easily check that $\phi(K)(V_F)\subseteq V_F$. Let $v_1\in V$ be an eigenvector of $\phi(K)$. Then there exists an integer $s$ such that $Kv_1=\mathbf{q}^sv_1$ because $K^{n^2}=1$. Since $E^n=0$, there exists an integer $l$ with $1\<l\<n$ such that $E^{l-1}v_1\neq 0$ and $E^lv_1=0$. Thus, one can check that $N=$span$\{v_1,Ev_1,E^2v_1,\cdots,E^{l-1}v_1\}$ is a submodule of $V$. Therefore, $V=N$. Moreover, by Lemma \ref{5.3.4}, a straightforward computation shows that
\begin{equation*}
\begin{array}{ll}
EE^iv_1=\left\{
\begin{array}{ll}
E^{i+1}v_1,& 0\<i<l-1,\\
0,& i=l-1,\\
\end{array}\right.&
FE^iv_1=\left\{
\begin{array}{ll}
0, & i=0,\\
\theta_{i}E^{i-1}v_1,& 1\<i\<l-1,\\
\end{array}\right.\\
KE^iv_1=\mathbf{q}^sq^{2i}E^{i}v_1,\ 0\<i\<l-1,\\
\end{array}
\end{equation*}
where $\theta_i=\frac{1}{q-q^{-1}}(\mathbf{q}^{-s}\frac{q^{2i}-1}{q^{2i}}\frac{q^2}{q^2-1}-\mathbf{q}^s\frac{1-q^{2i}}{1-q^2})$ for $1\<i\<l-1$. Since $V$ is an $H$-module, $EFE^{l-1}v_1-FEE^{l-1}v_1=\frac{K-K^{-1}}{q-q^{-1}}E^{l-1}v_1$. Hence $\theta_{l-1}=\frac{1}{q-q^{-1}}(q^{2(l-1)}\mathbf{q}^s-q^{-2(l-1)}\mathbf{q}^{-s})$, and so
$$\begin{array}{c}
\frac{1}{q-q^{-1}}(\mathbf{q}^{-s}\frac{q^{2(l-1)}-1}{q^{2(l-1)}}\frac{q^2}{q^2-1}-\mathbf{q}^s\frac{1-q^{2(l-1)}}{1-q^2})=\frac{1}{q-q^{-1}}(q^{2(l-1)}\mathbf{q}^s-q^{-2(l-1)}\mathbf{q}^{-s}),
\end{array}$$
which is equivalent to $\mathbf{q}^{2s}(q^{2l}-1)=q^{2-2l}(q^{2l}-1)$.
If $1\<l<n$, then $\mathbf{q}^{2s}=q^{2-2l}$, and hence $s\equiv n(1-l)$ (mod $n^2$). In this case, one can check $V\cong V_l$. Now assume $l=n$ and let $s=nk+t$ with $0\<t\<n-1$. Since $V$ is simple, $\theta_i\neq 0$ for all $1\<i\<n-1$, or equivalently
$n^2\nmid s+n(i-1)$ for all $1\<i\<n-1$. If $t=0$, then $n\nmid k+i-1$ for all $1\<i\<n-1$, or equivalently $n|k-1$. In this case, one can check $V\cong V_n$.
If $t\neq 0$ and $k$ is even, then one can check $V\cong {\bf V}(t,\frac{k}{2})$. If $t\neq 0$ and $k$ is odd, then one can check $V\cong {\bf V}(t,\frac{k+n}{2})$.
\end{proof}

For any $r\in\mathbb Z$, we still denote by $r$ the image of $r$ under the canonical epimorphism ${\mathbb Z}\rightarrow{\mathbb Z}_n$. Summarizing the above discussion, one obtains the following corollary.

\begin{corollary}\label{5.2.6}
The set $\{V_l,{\bf V}(t,r)|1\<l\<n, 1\<t\<n-1, r\in{\mathbb Z}_n\}$ is a representative set of isomorphic classes of simple $H$-modules.
\end{corollary}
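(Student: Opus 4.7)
The plan is to assemble the corollary as a bookkeeping consequence of Propositions~\ref{5.2.4} and \ref{5.2.5}, plus one additional non-isomorphism check that distinguishes the two families $\{V_l\}$ and $\{{\bf V}(t,r)\}$ from each other. First I would observe that \prref{5.2.5} already gives the surjectivity part: every simple $H$-module is isomorphic to some $V_l$ with $1\<l\<n$ or some ${\bf V}(t,r)$ with $1\<t\<n-1$ and $r\in\Z$. So the real content is to verify that the listed representatives are pairwise non-isomorphic when the parameters are taken in the stated ranges.

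Next I would handle the ${\bf V}(t,r)$ family. By \prref{5.2.4}, ${\bf V}(t,r)\cong {\bf V}(t',r')$ holds exactly when $t=t'$ and $r\equiv r'\pmod{n}$, so restricting $r$ to the set of residues ${\mathbb Z}_n$ gives a set of distinct isomorphism classes. For the $V_l$ family, the modules $V_l$ have pairwise distinct dimensions (namely $l$), so they are pairwise non-isomorphic.

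The only step requiring more than bookkeeping is ruling out isomorphisms between the two families. For $1\<l<n$ this is immediate from dimension count: $\dim V_l=l<n=\dim {\bf V}(t,r)$. The borderline case is $V_n$ versus ${\bf V}(t,r)$, where both sides have dimension $n$. Here I would compare the spectra of $K$: on $V_n$ the action formula $Km_i=q^{2i-n-1}m_i$ shows every eigenvalue lies in the subgroup $\langle q\rangle$ of $n$-th roots of unity, whereas on ${\bf V}(t,r)$ the eigenvalues have the form $\mathbf{q}^t q^{2(r+i-1)}=\mathbf{q}^{t+n(2r+2i-2)}$, and since $1\<t\<n-1$ the exponent $t+n(2r+2i-2)$ is never a multiple of $n$; thus these eigenvalues are not $n$-th roots of unity. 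So no $H$-isomorphism $V_n\to {\bf V}(t,r)$ can exist. This spectral separation is the main (and only slightly non-trivial) obstacle; everything else is formal. Combining these observations with \prref{5.2.5} yields the claimed complete and irredundant list.
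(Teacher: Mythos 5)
Your proposal is correct and follows the same route the paper intends: the paper's ``proof'' is just a pointer back to Propositions~\ref{5.2.4} and \ref{5.2.5}, which is exactly what you assemble. The one point you make explicit that the paper leaves implicit is the separation of $V_n$ from the ${\bf V}(t,r)$, and your $K$-spectrum argument there is sound, since the eigenvalues $\mathbf{q}^{t+2n(r+i-1)}$ with $n\nmid t$ are not $n$-th roots of unity while those on $V_n$ are (equivalently, $V_n$ is an $He_0$-module while ${\bf V}(t,r)$ is an $He_{n-t}$-module).
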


\subsection{Indecomposable $H$-modules}\label{5.3}
In this subsection, we investigate all indecomposable $H$-modules.

By Corollary \ref{5.3.2} and Subsection \ref{4.1}, it remains to classify all indecomposable $He_i$-module, $1\<i\<n-1$.
For any $1\<i\<n-1$, one can check that $\{E^j_iF^l_iK^t_i|0\<j,l,t\<n-1\}$ is a $\Bbbk$-basis of $He_i$.
For $1\<i\<n-1$ and $1\<j\<n$, let
$$\begin{array}{c}
T^i_j=\frac{1}{n}(\sum_{k=0}^{n-1}\mathbf{q}^{((j-1)n+i)k}K^{k}_i).
\end{array}$$
A tedious but routine computation shows that $\{T^i_j|1\<j\<n\}$ is a set of orthogonal idempotents of $He_i$ such that $\sum_{j=1}^nT^i_j=e_i$. Note that $He_iT^i_j=HT^i_j$.
Hence $He_i=\oplus_{j=1}^nHT^i_j$ as $H$-modules. One can easily check that $K_iT^i_j=\mathbf{q}^{(1-j)n-i}T^i_j$ and $K^{-1}_iT^i_j=\mathbf{q}^{i-(1-j)n}T^i_j$. Thus, $HT^i_j$ has a $\Bbbk$-basis $\{E^s_iF^l_iT^i_j|0\<s,l\<n-1\}$.

Let $1\<i\<n-1$ and $1\<j\<n$. For $1\<s\<k\<n$,  define $a^{ij}_{ks}\in \Bbbk$ by $a^{ij}_{kk}=1$ for $1\<k\<n$,
$$\begin{array}{c}
a^{ij}_{k1}=\frac{q^{2(k-1)}\mathbf{q}^{(1-j)n-i}-q^{2(1-k)}\mathbf{q}^{i-(1-j)n}}{q-q^{-1}}\neq 0\ \text{ for }2\<k\<n,
\end{array}$$
and $a^{ij}_{ks}=a^{ij}_{k1}+a^{ij}_{k-1,s-1}$ for $2\<s<k\<n$. It is easy to check $a^{ij}_{ks}=a^{ij}_{k-s+1,1}+a^{ij}_{k-s+2,1}+\cdots +a^{ij}_{k,1}\neq 0$.
For $1\<k\<n$, define
$$\begin{array}{c}
A^{ij}_k=\sum_{p=0}^{k-1}(\prod_{t=0}^{p}a^{ij}_{k,k-t}) E^{k-1-p}_iF^{n-1-p}_i.\\
\end{array}$$

\begin{lemma}\label{5.3.5}
Let $1\<i\<n-1$ and $1\<j, k\<n$. Then $F_iA^{ij}_kT^i_j=0$.
\end{lemma}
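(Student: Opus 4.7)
The plan is to compute $F_iA^{ij}_kT^i_j$ directly by distributing $F_i$ across the sum defining $A^{ij}_k$, using Lemma~\ref{5.3.4} to push $F_i$ past each $E^{k-1-p}_i$, then moving the resulting $K^{\pm 1}_i$ past the trailing power of $F_i$ via $K_iF_iK_i^{-1}=q^{-2}F_i$, and finally applying $K^{\pm 1}_iT^i_j=\gamma^{\pm 1}T^i_j$ with $\gamma:=\mathbf{q}^{(1-j)n-i}$. Setting $c_p:=\prod_{t=0}^{p}a^{ij}_{k,k-t}$ for brevity, the $p=k-1$ summand contributes directly $c_{k-1}F^{n-k+1}_iT^i_j$, while for each $0\<p\<k-2$ (writing $r:=k-1-p$) Lemma~\ref{5.3.4} splits the summand into a ``diagonal'' piece $c_pE^{r}_iF^{n-p}_iT^i_j$ plus a ``subdiagonal'' piece $c_p\mu_rE^{r-1}_iF^{n-1-p}_iT^i_j$, where $\mu_r$ is the explicit scalar obtained after using $q^n=1$ to replace $q^{\pm 2(n-1-p)}$ by $q^{\mp 2(k-r)}$.

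After collecting, every resulting monomial has the form $E^u_iF^{n-k+1+u}_iT^i_j$ for some $0\<u\<k-1$. The $u=k-1$ monomial vanishes since it contains $F^n_i=0$. For $1\<u\<k-2$ the coefficient is $c_{k-1-u}+c_{k-2-u}\mu_{u+1}$, and for $u=0$ it is $c_{k-1}+c_{k-2}\mu_1$ (absorbing the $p=k-1$ contribution). Since the telescoping quotient $c_{k-1-u}/c_{k-2-u}$ equals $a^{ij}_{k,u+1}$ by the definition of $c_p$, every coefficient vanishes once one establishes the identity
\begin{equation*}
\mu_r=-a^{ij}_{k,r}\qquad(1\<r\<k-1).
\end{equation*}

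The main technical step is verifying this identity. Summing the defining telescoping recursion $a^{ij}_{k,r}=a^{ij}_{k-r+1,1}+\cdots+a^{ij}_{k,1}$ as a pair of geometric series in $q^2$ yields the closed form
\begin{equation*}
a^{ij}_{k,r}=\frac{(q^{2r}-1)\bigl(q^{2(k-r)}\gamma-q^{2-2k}\gamma^{-1}\bigr)}{(q-q^{-1})(q^2-1)}.
\end{equation*}
A parallel simplification of $\mu_r$ via $\tfrac{1-q^{-2r}}{1-q^{-2}}=q^{2-2r}\tfrac{q^{2r}-1}{q^2-1}$ and $\tfrac{1-q^{2r}}{1-q^2}=\tfrac{q^{2r}-1}{q^2-1}$ then gives $\mu_r=-a^{ij}_{k,r}$ on the nose. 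The only real obstacle is the bookkeeping of $q$-exponents; conceptually nothing subtle occurs, and the reduction $q^n=1$ is what makes the two sides of the identity match.
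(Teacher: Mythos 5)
Your proposal is correct and follows essentially the same route as the paper: expand $A^{ij}_k$, push $F_i$ past each $E^{k-1-p}_i$ via Lemma~\ref{5.3.4}, commute $K^{\pm 1}_i$ through the trailing $F$-power, evaluate on $T^i_j$, and cancel the reindexed sums using the scalar identity $\mu_r=-a^{ij}_{k,r}$ (which is exactly the fact the paper invokes in its last step). Your derivation of the closed form of $a^{ij}_{k,r}$ by summing the telescoping definition as geometric series supplies the verification that the paper only asserts, so the argument is complete.
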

\begin{proof}
By Lemma \ref{5.3.4} and
$$\begin{array}{c}
A^{ij}_k=(\prod_{t=1}^{k}a^{ij}_{k,t})F^{n-k}_i+\sum_{p=0}^{k-2}(\prod_{t=0}^{p}a^{ij}_{k,k-t})E^{k-1-p}_iF^{n-1-p}_i,\\
\end{array}$$
 we have
$$\begin{array}{rl}
\vspace{0.1cm}
&F_iA^{ij}_kT^i_j\\
\vspace{0.1cm}
=&(\prod_{t=1}^{k}a^{ij}_{k,t})F^{n-k+1}_iT^i_j+\sum_{p=0}^{k-2}(\prod_{t=0}^{p}a^{ij}_{k,k-t})F_iE^{k-1-p}_iF^{n-1-p}_iT^i_j\\
\vspace{0.1cm}
=&(\prod_{t=1}^{k}a^{ij}_{k,t})F^{n-k+1}_iT^i_j+\sum_{p=0}^{k-2}(\prod_{t=0}^{p}a^{ij}_{k,k-t})E^{k-1-p}_iF^{n-p}_iT^i_j\\
\vspace{0.1cm}
&+\sum_{p=0}^{k-2}(\prod_{t=0}^{p}a^{ij}_{k,k-t})E^{k-2-p}_i\frac{1-q^{-2(k-1-p)}}{(1-q^{-2})(q-q^{-1})}K^{-1}_iF^{n-1-p}_iT^i_j\\
\vspace{0.1cm}
&-\sum_{p=0}^{k-2}(\prod_{t=0}^{p}a^{ij}_{k,k-t})E^{k-2-p}_i\frac{1-q^{2(k-1-p)}}{(1-q^2)(q-q^{-1})}K_iF^{n-1-p}_iT^i_j\\
\vspace{0.1cm}
=&\sum_{p=1}^{k-1}(\prod_{t=0}^{p}a^{ij}_{k,k-t})E^{k-1-p}_iF^{n-p}_iT^i_j\\
\vspace{0.1cm}
&+\sum_{p=0}^{k-2}(\prod_{t=0}^{p}a^{ij}_{k,k-t})E^{k-2-p}_iF^{n-1-p}_i
\frac{(1-q^{2(p+1-k)})q^{2(n-1-p)}\mathbf{q}^{i-(1-j)n}}{(1-q^{-2})(q-q^{-1})}T^i_j\\
\vspace{0.1cm}
&-\sum_{p=0}^{k-2}(\prod_{t=0}^{p}a^{ij}_{k,k-t})E^{k-2-p}_iF^{n-1-p}_i\frac{(1-q^{2(k-1-p)})q^{2(p+1-n)}\mathbf{q}^{(1-j)n-i}}{(1-q^{2})(q-q^{-1})}T^i_j\\
\vspace{0.1cm}
=&\sum_{p=1}^{k-1}(\prod_{t=0}^{p}a^{ij}_{k,k-t})E^{k-1-p}_iF^{n-p}_iT^i_j+\sum_{p=0}^{k-2}(\prod_{t=0}^{p}a^{ij}_{k,k-t})\\
\vspace{0.1cm}
&E^{k-2-p}_iF^{n-1-p}_iT^i_j\frac{\mathbf{q}^{i-(1-j+2p)n}-\mathbf{q}^{i-(2k-j-1)n}+\mathbf{q}^{(3+2p-j)n-i}-\mathbf{q}^{(1-j+2k)n-i}}{(q^2-1)(q-q^{-1})}\\
=&0,
\end{array}$$
where the last equality follows from the fact that
$$\begin{array}{c}
-a^{ij}_{k,k-p}=\frac{\mathbf{q}^{i-(2p-j-1)n}-\mathbf{q}^{i-(2k-j-1)n}+
\mathbf{q}^{(2p+1-j)n-i}-\mathbf{q}^{(1-j+2k)n-i}}{(q^2-1)(q-q^{-1})}.
\end{array}$$
\end{proof}
For any $1\<i\<n-1$ and $1\<j, k\<n$, one can check $K_iA^{ij}_k=q^{2k}A^{ij}_kK_i$.
Let $r_{k,j}:=\frac{n+2k-j}{2}$ if $2k-j$ is odd, and  $r_{k,j}:=\frac{2k-j}{2}$ if $2k-j$ is even.  Then we have the following proposition.

\begin{proposition}\label{5.3.6}
Let $1\<i\<n-1$ and $1\<j, k\<n$. Then the $He_i$-submodule $\langle A^{ij}_kT^i_j\rangle$ of $HT^i_j$ is isomorphic to ${\bf V}(n-i,r_{k,j})$, where $r_{k,j}$ is defined as above.
\end{proposition}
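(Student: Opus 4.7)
The plan is to construct an explicit $H$-module isomorphism $\varphi \colon {\bf V}(n-i, r_{k,j}) \longrightarrow \langle A^{ij}_k T^i_j\rangle$ that sends the standard basis vector $v_1$ to the generator $A^{ij}_k T^i_j$. Since ${\bf V}(n-i, r_{k,j})$ is simple by Lemma~\ref{5.2.2}, once I know the map is well defined and nonzero it is automatically injective, and its image is an $H$-submodule containing the generator, so it must equal $\langle A^{ij}_k T^i_j\rangle$.

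First I would establish the two key properties of the candidate highest weight vector $A^{ij}_k T^i_j$. By Lemma~\ref{5.3.5}, $F_i A^{ij}_k T^i_j = 0$, so $A^{ij}_k T^i_j$ lies in the $F$-kernel. From the preceding commutation $K_i A^{ij}_k = q^{2k} A^{ij}_k K_i$ and the identity $K_i T^i_j = \mathbf{q}^{(1-j)n - i} T^i_j$, one gets
$$K_i \cdot A^{ij}_k T^i_j \,=\, q^{2k}\mathbf{q}^{(1-j)n-i}\, A^{ij}_k T^i_j \,=\, \mu\, A^{ij}_k T^i_j,$$
with $\mu := q^{2k}\mathbf{q}^{(1-j)n-i}$. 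Using $\mathbf{q}^n = q$, one rewrites $\mu = \mathbf{q}^{-i} q^{2k+1-j}$, whereas the $K$-eigenvalue of $v_1 \in {\bf V}(n-i, r_{k,j})$ is $\mathbf{q}^{n-i} q^{2r_{k,j}} = \mathbf{q}^{-i} q^{2r_{k,j}+1}$. So the eigenvalues agree iff $q^{2k-j} = q^{2r_{k,j}}$, i.e.\ $n \mid (2k - j - 2r_{k,j})$. A two-line case split on the parity of $2k-j$ against the definition of $r_{k,j}$ confirms this. In addition, since $\{E^s_i F^l_i T^i_j\mid 0\<s,l\<n-1\}$ is a $\Bbbk$-basis of $HT^i_j$ and the term with $p=0$ in $A^{ij}_k$ has coefficient $1$, the element $A^{ij}_k T^i_j$ is nonzero.

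Now I would define $\varphi(v_s) := E^{s-1} A^{ij}_k T^i_j$ for $1 \< s \< n$ and check that $\varphi$ intertwines the actions of $E$, $F$, $K$. The $E$-compatibility is immediate from $E^n_i = 0$ in $He_i$ (Lemma~\ref{5.3.1}). The $K$-compatibility follows from $KE^{s-1} = q^{2(s-1)}E^{s-1}K$ together with the eigenvalue match above. The $F$-compatibility is the main computation: for $s\>2$, apply Lemma~\ref{5.3.4} to get
$$F E^{s-1} A^{ij}_k T^i_j \,=\, E^{s-1}F\, A^{ij}_k T^i_j + \tilde\gamma_{s-1}\, E^{s-2} A^{ij}_k T^i_j,$$
where the first summand vanishes by Lemma~\ref{5.3.5} and the scalar $\tilde\gamma_{s-1}$ is produced from Lemma~\ref{5.3.4} by replacing $K$ and $K^{-1}$ by $\mu$ and $\mu^{-1}$, respectively. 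Substituting $\mu = \mathbf{q}^{n-i} q^{2 r_{k,j}}$ makes $\tilde\gamma_{s-1}$ coincide with the coefficient $\gamma_{s-1}$ defined in Subsection~\ref{5.2} for $t = n-i$, $r = r_{k,j}$, which is exactly $\varphi(F v_s)$.

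Finally, $\varphi$ is nonzero (it sends $v_1$ to the nonzero element $A^{ij}_k T^i_j$), hence injective by simplicity of ${\bf V}(n-i, r_{k,j})$. Its image, being spanned by $A^{ij}_k T^i_j, E A^{ij}_k T^i_j, \ldots, E^{n-1}A^{ij}_k T^i_j$, is exactly $\langle A^{ij}_k T^i_j\rangle$, so $\varphi$ is the desired isomorphism. The only genuine obstacle is the bookkeeping in the $K$-eigenvalue identification, which forces one to keep $q$ and $\mathbf{q}$ strictly separate and to invoke the even/odd case definition of $r_{k,j}$; once that is pinned down, the $F$-action identity is a mechanical application of Lemma~\ref{5.3.4}.
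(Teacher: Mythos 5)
Your proposal is correct and follows essentially the same route as the paper: both arguments rest on Lemma~\ref{5.3.5} (the generator lies in the $F$-kernel), the computation of the $K$-eigenvalue $q^{2k}\mathbf{q}^{(1-j)n-i}=\mathbf{q}^{n-i}q^{2r_{k,j}}$, and Lemma~\ref{5.3.4} to identify the $F$-action coefficients on $E_i^{t}A^{ij}_kT^i_j$ with the $\gamma_t$ defining ${\bf V}(n-i,r_{k,j})$. Packaging this as an explicit homomorphism out of the simple module ${\bf V}(n-i,r_{k,j})$ and invoking simplicity for injectivity is only a cosmetic variation on the paper's direct identification of the basis and module action.
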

\begin{proof}
One can easily check that
$$\{A^{ij}_kT^i_j, E_iA^{ij}_kT^i_j, E^2_iA^{ij}_kT^i_j, \cdots, E^{n-1}_iA^{ij}_kT^i_j\}$$ is a $\Bbbk$-basis of
 $\langle A^{ij}_kT^i_j\rangle$.
Moreover, by Lemma \ref{5.3.4}, the $He_i$-module action on $\langle A^{ij}_kT^i_j\rangle$ is determined  by
$$K_iE^t_iA^{ij}_kT^i_j=\mathbf{q}^{n-i}q^{2(r_{k,j}+t)}E^t_iA^{ij}_kT^i_j$$ and
$$F_iE^{t}_iA^{ij}_kT^i_j=\gamma_{t}E^{t-1}_iA^{ij}_kT^i_j,$$
where $\gamma_t=\frac{1}{q-q^{-1}}(\mathbf{q}^{i-n}\frac{q^{-2r_{k,j}}-q^{-2(r_{k,j}+t)}}{1-q^{-2}}-\mathbf{q}^{n-i}\frac{q^{2r_{k,j}}-q^{2(r_{k,j}+t)}}{1-q^2})$
for $0\<t\<n-1$. Hence $\langle A^{ij}_kT^i_j\rangle \cong {\bf V}(n-i,r_{k,j})$.
\end{proof}
\begin{proposition}\label{5.3.7}
 Let $1\<i\<n-1$ and $1\<j\<n$. Then the sum $\langle A^{ij}_1T^i_j\rangle +\langle A^{ij}_2T^i_j\rangle +\cdots+\langle A^{ij}_nT^i_j\rangle$ is direct. Moreover, we have $HT^i_j\cong \oplus_{r=0}^{n-1}{\bf V}(n-i,r).$
\end{proposition}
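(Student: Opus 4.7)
My plan is a three-step argument: first identify each summand as a simple module of dimension $n$; then verify pairwise non-isomorphism so that the sum is automatically direct; and finally compare with $\dim HT^i_j=n^2$ to conclude equality.

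For the first two steps, \prref{5.3.6} gives $\langle A^{ij}_k T^i_j\rangle\cong {\bf V}(n-i,r_{k,j})$, which is simple of dimension $n$ by \leref{5.2.2}. By \prref{5.2.4}, two such summands are isomorphic if and only if $r_{k,j}\equiv r_{k',j}\pmod n$. I would then check that in either branch of the parity-defined formula one has $2r_{k,j}\in\{2k-j,\,2k-j+n\}$, so uniformly $2r_{k,j}\equiv 2k-j\pmod n$. Since $n$ is odd, $2$ is invertible modulo $n$, hence the assignment $k\mapsto r_{k,j}\pmod n$ is a bijection $\{1,\ldots,n\}\to\mathbb{Z}_n$. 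In particular, the $n$ summands represent $n$ distinct isomorphism classes of simple modules, and the standard fact that a sum of pairwise non-isomorphic simple submodules is automatically direct (each pairwise intersection is a proper submodule of a simple, hence zero; iterate) yields directness of $\sum_{k=1}^n\langle A^{ij}_k T^i_j\rangle$.

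The third step is then a dimension count: the direct sum has $\Bbbk$-dimension $n\cdot n=n^2$, which equals $\dim HT^i_j$ because $\{E^s_i F^l_i T^i_j\mid 0\<s,l\<n-1\}$ is a $\Bbbk$-basis. Consequently the direct sum exhausts $HT^i_j$, and reindexing the summands by the bijection $k\mapsto r_{k,j}$ delivers the stated isomorphism $HT^i_j\cong\bigoplus_{r=0}^{n-1}{\bf V}(n-i,r)$. The only mildly delicate point is the parity bookkeeping in the definition of $r_{k,j}$, but this is disposed of in a single line by the oddness of $n$; everything else is semisimplicity together with a dimension count built on the earlier propositions.
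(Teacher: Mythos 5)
Your proposal is correct and follows essentially the same route as the paper: identify each $\langle A^{ij}_kT^i_j\rangle$ with ${\bf V}(n-i,r_{k,j})$ via Proposition \ref{5.3.6}, use Proposition \ref{5.2.4} to see the summands are pairwise non-isomorphic simples (hence the sum is direct), and finish by comparing dimensions with $\dim HT^i_j=n^2$. The one improvement over the paper's "one can easily check" is that you actually supply the reason the $r_{k,j}$ are distinct in $\mathbb{Z}_n$, namely $2r_{k,j}\equiv 2k-j \pmod n$ together with the invertibility of $2$ modulo the odd number $n$.
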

\begin{proof}
One can easily check that $r_{k_1,j}\neq r_{k_2,j}$ in $\mathbb{Z}_n$ if $1\<k_1\neq k_2\<n$, where $r_{k_1,j}$ and $r_{k_2,j}$ are defined as before. By Proposition \ref{5.3.6}, one knows that $\langle A^{ij}_{k}T^i_j\rangle \cong {\bf V}(n-i,r_{k,j})$ for any $1\<k\<n$.  Then by Proposition \ref{5.2.4}, the sum $\langle A^{ij}_1T^i_j\rangle +\langle A^{ij}_2T^i_j\rangle +\cdots+\langle A^{ij}_nT^i_j\rangle$ is direct.
Furthermore, we have $$HT^i_j=\langle A^{ij}_1T^i_j\rangle \oplus \langle A^{ij}_2T^i_j\rangle \oplus \cdots\oplus\langle A^{ij}_nT^i_j\rangle $$ by comparing their dimensions. Consequently, $HT^i_j\cong \oplus_{r=0}^{n-1}{\bf V}(n-i,r).$
\end{proof}
\begin{corollary}\label{5.3.8}
 For any  $1\<i\<n-1$, $He_i$ is a  semisimple algebra.
\end{corollary}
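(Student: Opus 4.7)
The plan is to deduce semisimplicity of $He_i$ directly from the decomposition results already established in Propositions \ref{5.3.6} and \ref{5.3.7}. The key observation is that semisimplicity of a ring is equivalent to semisimplicity of the ring as a left module over itself, so it suffices to exhibit $He_i$ as a direct sum of simple $He_i$-modules.

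First I would recall that the elements $\{T^i_j \mid 1 \le j \le n\}$ form a set of orthogonal idempotents in $He_i$ with $\sum_{j=1}^n T^i_j = e_i$, and that $e_i$ is the identity of $He_i$. This gives an internal direct sum decomposition
\[
He_i = \bigoplus_{j=1}^{n} He_i T^i_j = \bigoplus_{j=1}^{n} HT^i_j
\]
as left $He_i$-modules (the second equality being noted in the paragraph preceding Lemma \ref{5.3.5}).

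Next I would invoke Proposition \ref{5.3.7}, which gives $HT^i_j \cong \oplus_{r=0}^{n-1} \mathbf{V}(n-i, r)$. Combined with Lemma \ref{5.2.2}, which says that each $\mathbf{V}(n-i, r)$ is a simple $H$-module (and therefore simple as an $He_i$-module, since $1 \le n-i \le n-1$ forces the action to factor through $He_i$), we obtain
\[
He_i \;\cong\; \bigoplus_{j=1}^{n} \bigoplus_{r=0}^{n-1} \mathbf{V}(n-i, r)
\]
as left $He_i$-modules. Thus the regular representation of $He_i$ is a direct sum of simple modules, which is precisely the definition of $He_i$ being a semisimple algebra. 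There is essentially no obstacle here; the entire content has been done in Propositions \ref{5.3.6} and \ref{5.3.7}, and this corollary is just the packaging of those results into the standard semisimplicity criterion.
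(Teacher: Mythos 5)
Your proof is correct and is exactly the argument the paper intends (the corollary is stated without proof precisely because it follows from $He_i=\oplus_{j=1}^n HT^i_j$ together with Proposition \ref{5.3.7} and Lemma \ref{5.2.2}, as you spell out). No issues.
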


\begin{corollary}\label{5.3.9}
Let $1\<t\<n-1$ and $r\in\mathbb{Z}$. Then ${\bf V}(t,r)$ is a projective simple $H$-module.
\end{corollary}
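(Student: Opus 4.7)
The plan is to exhibit ${\bf V}(t,r)$ as a direct summand of a module already known to be projective, leaning on the decomposition in \prref{5.3.7} together with the product decomposition of $H$.

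First I would note that simplicity is already settled by \leref{5.2.2}, so only projectivity remains. The natural strategy is to identify the block of $H$ in which ${\bf V}(t,r)$ lives. A direct computation on the standard basis shows $K^n v_i=\mathbf{q}^{tn}q^{2n(r+i-1)}v_i=q^t v_i$, because $q=\mathbf{q}^n$ and $q^{2n}=1$. Substituting into $e_i=\frac{1}{n}\sum_{k=0}^{n-1}q^{ki}K^{kn}$, the only idempotent acting nontrivially is $e_{n-t}$, which acts as the identity. Hence ${\bf V}(t,r)$ is an $He_{n-t}$-module, and the index $i:=n-t$ lies in $\{1,\dots,n-1\}$.

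Now I would apply \prref{5.3.7}: with this choice of $i$, one has the $H$-module isomorphism $HT^i_j\cong\bigoplus_{r'=0}^{n-1}{\bf V}(n-i,r')=\bigoplus_{r'=0}^{n-1}{\bf V}(t,r')$. Since $HT^i_j$ is a direct summand of $He_i$, which is itself a direct factor of the regular module $H$ under the algebra decomposition $H\cong He_0\times\cdots\times He_{n-1}$, the module $HT^i_j$ is projective over $H$. Therefore each of its indecomposable summands ${\bf V}(t,r')$ is projective. By \prref{5.2.4}, any $r\in\mathbb{Z}$ satisfies ${\bf V}(t,r)\cong{\bf V}(t,r')$ for the unique $r'\in\{0,1,\dots,n-1\}$ with $r\equiv r'\pmod n$, so ${\bf V}(t,r)$ is projective as claimed.

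There is essentially no obstacle: the argument is a bookkeeping exercise on top of \prref{5.3.7}. The only place requiring care is the eigenvalue computation showing that ${\bf V}(t,r)$ is killed by every $e_j$ with $j\neq n-t$, but this is immediate from $q=\mathbf{q}^n$. Alternatively (and even shorter) one may invoke \coref{5.3.8}: since $He_{n-t}$ is semisimple, every $He_{n-t}$-module, and in particular ${\bf V}(t,r)$, is projective over $He_{n-t}$; the block decomposition of $H$ then promotes this to projectivity over $H$.
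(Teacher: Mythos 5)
Your argument is correct and is precisely the route the paper intends: the corollary is stated without proof as an immediate consequence of Proposition \ref{5.3.7} (equivalently Corollary \ref{5.3.8}), and your computation identifying the block $He_{n-t}$ and exhibiting ${\bf V}(t,r)$ as a direct summand of the projective module $HT^{n-t}_j$ simply makes the omitted bookkeeping explicit. The reduction of arbitrary $r\in\mathbb{Z}$ to $r'\in\{0,\dots,n-1\}$ via Proposition \ref{5.2.4} is also the right way to close the statement for all integers $r$.
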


\begin{theorem}\label{5.3.10} The set
$\{V_l, {\bf V}(t,r), P_t, \Omega^{\pm s}V_t, M_s({t}, \eta)|1\<l\<n, 1\<t\<n-1, r\in\mathbb{Z}_n, s\>1, \eta\in \overline{\Bbbk}\}$ forms a complete set of all non-isomorphic indecomposable $H$-modules.
\end{theorem}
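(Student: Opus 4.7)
The plan is to assemble the classification from the pieces already established rather than doing anything new. By Corollary~\ref{5.3.2}(1), every indecomposable $H$-module lies in Ind\,$He_i$ for a unique $i\in\{0,1,\ldots,n-1\}$, and modules sitting in different blocks are distinguished by the action of the central idempotents $e_0,\ldots,e_{n-1}$. So it suffices to describe Ind\,$He_i$ for each $i$, and then verify that the combined list contains no redundancy.

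For $i=0$: the algebra isomorphism $\psi:He_0\to\overline{U}_q$ noted at the opening of this section identifies Ind\,$He_0$ with Ind\,$\overline{U}_q$. The classification of the latter is recalled in Subsection~\ref{4.1}, giving the four families $V_l$ ($1\<l\<n$), $P_l$ ($1\<l\<n-1$), $\Omega^{\pm s}V_l$ ($s\>1$, $1\<l\<n-1$), and $M_s(l,\eta)$ ($s\>1$, $1\<l\<n-1$, $\eta\in\overline{\Bbbk}$) as a complete list of pairwise non-isomorphic indecomposable $\overline{U}_q$-modules.

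For $1\<i\<n-1$: by Corollary~\ref{5.3.8}, $He_i$ is semisimple, so Ind\,$He_i$ coincides with the set of isomorphism classes of simple $He_i$-modules. To locate these inside the list of simple $H$-modules provided by Corollary~\ref{5.2.6}, I would simply compute how $e_i$ acts on each simple. From $Km_j=q^{2j-l-1}m_j$ on $V_l$ and $q^n=1$ one gets $K^n|_{V_l}=\mathrm{id}$, hence $e_iV_l=0$ whenever $i\neq 0$. From $Kv_j=\mathbf{q}^tq^{2(r+j-1)}v_j$ on ${\bf V}(t,r)$ together with $\mathbf{q}^n=q$ one gets $K^nv_j=q^tv_j$, and therefore $e_i$ acts as the identity on ${\bf V}(t,r)$ exactly when $i+t\equiv 0\pmod n$. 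Consequently the simple $He_i$-modules for $1\<i\<n-1$ are precisely $\{{\bf V}(n-i,r):r\in\mathbb{Z}_n\}$, and letting $i$ range from $1$ to $n-1$ recovers the family $\{{\bf V}(t,r):1\<t\<n-1,\ r\in\mathbb{Z}_n\}$.

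Concatenating the two cases yields the list in the theorem. Pairwise non-isomorphism then rests on three observations: (i) modules living in distinct blocks $He_i$, $He_{i'}$ cannot be isomorphic because $e_i$ acts as $1$ on one and as $0$ on the other; (ii) within $He_0$ the pairwise non-isomorphism of the four families is part of the classification of Ind\,$\overline{U}_q$ in Subsection~\ref{4.1}; and (iii) within each $He_i$ with $1\<i\<n-1$, distinct ${\bf V}(n-i,r)$'s are non-isomorphic by Proposition~\ref{5.2.4}. I do not expect any real obstacle here, since all substantive work—centrality of the $e_i$, semisimplicity of $He_i$ for $i\neq 0$, classification of simple $H$-modules, and classification of indecomposable $\overline{U}_q$-modules—has already been done; the only care needed is in the routine block-assignment computation via the action of $K^n$.
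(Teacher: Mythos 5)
Your proposal is correct and follows essentially the same route as the paper, which simply cites Corollary~\ref{5.3.2}, Corollary~\ref{5.2.6}, Corollary~\ref{5.3.8} and Subsection~\ref{4.1}; you merely make explicit the routine block-assignment computation (via the action of $K^n$, giving $e_iV_l=0$ for $i\neq 0$ and placing ${\bf V}(t,r)$ in the block $He_{n-t}$, consistent with Proposition~\ref{5.3.7}). No gaps.
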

\begin{proof}
It follows from Corollary \ref{5.3.2}, Corollaries \ref{5.2.6}, \ref{5.3.8}, and Subsection \ref{4.1}.
\end{proof}

\section{\bf Tensor products of indecomposable $H$-modules}\selabel{5.4}
In this section, we investigate the decomposition rules for the tensor products of finite dimensional indecomposable modules over $H$.

By Subsection 2.3, $G=\langle K\rangle$ and $|G|=m$. For any $f=K^s\in G$, we denote $1_f$ by $1_s$. Then $1_s=\frac{1}{m}\sum_{r=0}^{m-1}\xi^{sr}_mK^r.$
A tedious but routine computation shows that
$$\begin{array}{rll}
\Delta(E)&=&(E\otimes 1)(T_1\otimes K^{n^2-n}+T_2\otimes 1)+K\otimes E,\\
\Delta(F)&=&(F\otimes 1)(T_3\otimes K^{n^2-1}+T_4\otimes K^{n-1})+1\otimes F,
\end{array}$$
where $T_1=\sum_{i=0}^{2n-1}1_i$, $T_2=\sum_{i=2n}^{n^2-1}1_i$, $T_3=\sum_{i=0}^{n^2-2n-1}1_i$ and $T_4=\sum_{i=n^2-2n}^{n^2-1}1_i$. Moreover, $T_1+T_2=1$ and $T_3+T_4=1$.

By the discussion in Section 3, there is an algebra epimorphism
$$\begin{array}{c}
 \phi: H\rightarrow \overline{U}_q,\
 K\mapsto  K,
 E\mapsto E,
 F\mapsto F,
 K^{-1}\mapsto K^{-1}
 \end{array}$$
and $\mathrm{Mod}\overline{U}_q$ (or equivalently, $\mathrm{Mod}He_0$) is a subcategory of $\mathrm{Mod}H$.
By taking the the opposite coalgebra structure, one gets a new Hopf algebra  $\overline{U}_q^{\rm cop}$. That is,  $\overline{U}_q^{\rm cop}$ has the same algebra structure with $\overline{U}_q$ but has the opposite comultiplication.
Thus, the above $\phi$ is also an algebra epimorphism from $H$ to $\overline{U}_q^{\rm cop}$.
A straightforward verification shows that the following two diagrams commute:
$$\begin{tikzpicture}[scale=1.0]
\path (-4.8,0) node(1) {$$} (-4.6,0) node(.) {$H$} (-1.6,0) node(.) {$\overline{U}^{\rm cop}_q$} (-2.4,0) node (.) {$$};
\path (-4.8,-2) node(1) {$$} (-4.6,-2) node(.) {$H\otimes H$} (-1.4,-2) node(.) {$\overline{U}^{\rm cop}_q\otimes \overline{U}^{\rm cop}_q$} (-2.4,-2) node (.) {$$};
\path (-4.2,0) node(a1) {} (-3.2, 0.20) node(.) {$\phi$} (-2.2, 0) node(b1) {};
\path (-4.2,-2) node(a3) {} (-3.2, -1.8) node(.) {$\phi\otimes \phi$} (-2.4, -2) node(b4) {};
\path (-4.6,-0.3) node(c1) {$$} (-1.6,-0.3) node(c2) {$$};
\path (-4.6,-1.6) node(c3) {$$} (-1.6,-1.6) node(c4) {$$};
\path (-4.4,-1) node(.) {$\Delta$} (-1.8,-1) node(.) {$\Delta$};
\draw[->] (a1) --(b1);
\draw[->] (a3) --(b4);
\draw[->] (c1) --(c3);
\draw[->] (c2) --(c4);
\
\path (0.2,0) node(1) {$$} (0.4,0) node(.) {$H$} (3.4,0) node(.) {$\overline{U}^{\rm cop}_q$} (2.6,0) node (.) {$$};
\path (0.2,-2) node(1) {$$} (0.4,-2) node(.) {$$} (3.3,-2) node(.) {$\Bbbk$} (2.6,-2) node (.) {$$};
\path (0.6,0) node(a1) {} (1.8, 0.20) node(.) {$\phi$} (3.0, 0) node(b1) {};
\path (0.8,-2) node(a3) {} (1.8, -1.8) node(.) {$$} (3.0, -2) node(b4) {};
\path (0.4,-0.2) node(c1) {$$} (3.3,-0.3) node(c2) {$$};
\path (0.4,-1.6) node(c3) {$$} (3.3,-1.8) node(c4) {$$};
\path (1.0,-1) node(.) {$\varepsilon$} (3.0,-1) node(.) {$\varepsilon$};
\draw[->] (a1) --(b1);
\draw[->] (c1) --(b4);
\draw[->] (c2) --(c4);
\end{tikzpicture}$$
Hence $\phi$ is a quasi-bialgebra epimorphism from $H$ to $\overline{U}_q^{\rm cop}$.
Note that a bialgebra (resp., Hopf algebra) is a quasi-bialgebra (resp., quasi-Hopf algebra).
So we have the following remark.
\begin{remark}
The category $\mathrm{Mod}\overline{U}^{\rm cop}_q$ is a monoidal full subcategory of $\mathrm{Mod}H$,
and consequently $r(\overline{U}^{\rm cop}_q)$ is a subring of $r(H)$. Since $\overline{U}_q$ is quasitriangular,
$M\ot N\cong N\ot M$ for any $M, N\in \mathrm{Mod}\overline{U}_q$. Thus, for any two $\overline{U}_q$-modules $M$ and $N$,
the decomposition rule of the tensor product $M\ot N$ in $\mathrm{Mod}\overline{U}_q$ is the same as that in mod$H$.
That is, $r(\overline{U}_q)$ ($=r(\overline{U}_q^{\rm cop}$)) is a subring of $r(H)$.
\end{remark}
\begin{remark}\label{s iso s}
The stable Green ring $r_{st}(\overline{U}_q)$ of the small quantum group  $\overline{U}_q$ is isomorphic to the stable Green ring $r_{st}(\widetilde{U}_q)$ of the small qusi-quantum group $\widetilde{U}_q$.
\end{remark}

The decomposition rules for the tensor products of any two indecomposable $He_0$-module has been studied in \cite[Proposition 3.1- Corollary 5.24]{ChHasSun}. Hence it remains to investigate the tensor products of the indecomposable $He_0$-modules and the simple $He_i$-modules, and the tensor products of the simple $He_i$-modules and the simple $He_j$-modules $1\<i,j\<n-1$.
We first investigate the decomposition rules for the tensor products of the simple $He_0$-modules and the simple $He_i$-modules for $1\<i\<n-1$.

\begin{lemma}\label{5.4.1}
Let $M$ be an $H$-module. If $v\in M$ and $1\<k\<m$ satisfy $Kv=\mathbf{q}^kv$, then
$1_{m-k}v=v\ { \rm and} \  1_jv=0$ for any $0\<j\<m-1$ with $j\neq m-k$.
\end{lemma}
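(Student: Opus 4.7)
The plan is to expand the idempotent $1_j$ using its definition from Subsection 2.3 and reduce the statement to the standard orthogonality identity for roots of unity. Recall from the discussion opening Section 4 that
$$1_j=\frac{1}{m}\sum_{r=0}^{m-1}\mathbf{q}^{jr}K^r,$$
using $\xi_m=\mathbf{q}$. The entire lemma then reduces to a one-line computation once the eigenvalue hypothesis $Kv=\mathbf{q}^kv$ is fed in.

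First, I would iterate $Kv=\mathbf{q}^kv$ to obtain $K^rv=\mathbf{q}^{kr}v$ for every integer $r\>0$, and substitute this into the expression for $1_jv$ to get
$$1_jv=\frac{1}{m}\left(\sum_{r=0}^{m-1}\mathbf{q}^{(j+k)r}\right)v.$$
Since $\mathbf{q}$ is a primitive $m$-th root of unity, the bracketed geometric sum equals $m$ when $m\mid(j+k)$ and vanishes otherwise.

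To conclude, I would observe that the hypotheses $0\<j\<m-1$ and $1\<k\<m$ force $1\<j+k\<2m-1$, so the unique $j$ in the required range satisfying $m\mid(j+k)$ is $j=m-k$. This simultaneously yields $1_{m-k}v=v$ and $1_jv=0$ for all other $j$. No real obstacle arises; the only point worth a second glance is the boundary case $k=m$, where $m-k=0$ and the relation $Kv=\mathbf{q}^mv=v$ is consistent with $1_0v=v$ via $K^rv=v$ for every $r$.
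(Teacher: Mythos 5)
Your computation is correct and is precisely the ``straightforward computation'' the paper alludes to: expanding $1_jv=\frac{1}{m}\sum_{r=0}^{m-1}\mathbf{q}^{(j+k)r}v$ and applying the orthogonality of roots of unity, with the range restrictions on $j$ and $k$ pinning down $j=m-k$ as the unique surviving index. Nothing further is needed.
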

\begin{proof}
It follows from a straightforward computation.
\end{proof}
\begin{lemma}\label{5.4.2}
Let $1\<t\<n-1$ and $r\in\mathbb{Z}$. Then
$$\begin{array}{c}
V_2\otimes {\bf V}(t,r)\cong {\bf V}(t,r)\otimes V_2\cong {\bf V}(t,r+\frac{n-1}{2})\oplus {\bf V}(t,r+\frac{n+1}{2}).
\end{array}$$
\end{lemma}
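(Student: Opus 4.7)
The plan is to produce the two isomorphisms by explicitly locating the $F$-annihilated weight vectors in each tensor product, identifying the simple summands they generate through the classification in Corollary~\ref{5.2.6}, and closing by a dimension count. Fix the standard bases $m_1,m_2$ of $V_2$ and $v_1,\dots,v_n$ of ${\bf V}(t,r)$ from Subsection~\ref{5.2}; these give $Km_1=\mathbf{q}^{-n}m_1$, $Km_2=\mathbf{q}^{n}m_2$, and $Kv_j=\mathbf{q}^{t+2n(r+j-1)}v_j$. By Lemma~\ref{5.4.1} the $K$-weights pin down which of $T_1,T_2$ (resp.\ $T_3,T_4$) acts as the identity on each basis vector, so $\Delta(E)$ and $\Delta(F)$ collapse to explicit expressions involving only $K$-powers when applied to a pure weight tensor.

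For $V_2\otimes{\bf V}(t,r)$, the vector $m_1\otimes v_1$ is automatically killed by $F$ (both the $(F\otimes 1)(T_3\otimes K^{n^2-1}+T_4\otimes K^{n-1})$ piece and the $1\otimes F$ piece vanish on it), and carries $K$-weight $\mathbf{q}^{-n}\mathbf{q}^{t+2nr}=\mathbf{q}^{t+2n(r+(n-1)/2)}$ using $\mathbf{q}^{n^2}=1$; this is exactly the highest weight of ${\bf V}(t,r+(n-1)/2)$. To realise the other summand, I seek an $F$-killed vector of weight $\mathbf{q}^{n}\mathbf{q}^{t+2nr}=\mathbf{q}^{t+2n(r+(n+1)/2)}$; the only weight-homogeneous candidates are $m_2\otimes v_1$ and $m_1\otimes v_2$. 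Set $u=m_2\otimes v_1+c\,m_1\otimes v_2$; a direct computation of $Fu$ (using $T_4m_2=m_2$, since $n^2-n\in[n^2-2n,n^2-1]$) produces a scalar multiple of $m_1\otimes v_1$, and one linear equation determines a unique nonzero $c$ forcing $Fu=0$.

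Each of $\langle m_1\otimes v_1\rangle$ and $\langle u\rangle$ therefore contains an $F$-killed $K$-eigenvector whose weight is of the form $\mathbf{q}^{t}q^{2s}$ with $1\le t\le n-1$. Since $t\not\equiv 0\pmod n$, the argument in the proof of Proposition~\ref{5.2.5} forces each submodule to be $n$-dimensional and isomorphic to ${\bf V}(t,r+(n-1)/2)$, respectively ${\bf V}(t,r+(n+1)/2)$; Proposition~\ref{5.2.4} guarantees these two simples are non-isomorphic, so the sum is direct and the dimension count $n+n=2n=\dim(V_2\otimes{\bf V}(t,r))$ yields the first isomorphism. The decomposition of ${\bf V}(t,r)\otimes V_2$ is obtained by the symmetric construction, with $F$-killed vectors $v_1\otimes m_1$ and $v_1\otimes m_2+c'v_2\otimes m_1$ for an analogously determined scalar $c'$.

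The main obstacle is the bookkeeping in evaluating $\Delta(E)$ and $\Delta(F)$ on mixed-weight tensors: one must track which of $T_1,\dots,T_4$ is active on each factor, combine $\mathbf{q}^{n^2}=1$ with the precise coefficients $\beta_i(l)$ and $\gamma_j$ governing the actions on $V_2$ and ${\bf V}(t,r)$, and solve a single linear equation to pin down $c$. Once $c$ is in hand the remainder reduces mechanically to comparing $K$-eigenvalues against Corollary~\ref{5.2.6}.
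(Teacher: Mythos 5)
Your proposal is correct and follows essentially the same route as the paper: locate the two $F$-annihilated weight vectors $m_1\otimes v_1$ and $u=m_2\otimes v_1+c\,m_1\otimes v_2$ (the paper's $u$ is the same vector up to normalization), show each generates a copy of ${\bf V}(t,r+\frac{n-1}{2})$ resp.\ ${\bf V}(t,r+\frac{n+1}{2})$, and conclude by a dimension count, with the symmetric construction for ${\bf V}(t,r)\otimes V_2$. The only (harmless) variation is that you invoke the structural argument of Proposition \ref{5.2.5} to see that $\langle w\rangle$ is forced to be $n$-dimensional of the claimed isomorphism type when $t\not\equiv 0\pmod n$, whereas the paper verifies the linear independence of $\{w,Ew,\dots,E^{n-1}w\}$ and the coefficients $FE^jw=\gamma_j'E^{j-1}w$ by direct computation.
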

\begin{proof}
Let $M=V_2\otimes {\bf V}(t,r)$. Let $\{m_1, m_2\}$ and $\{v_1, v_2, \cdots, v_n\}$ be the standard  bases of  $V_2$ and ${\bf V}(t,r)$ respectively, as stated in Subsection $3.2$.  By Lemma \ref{5.4.1}, we have

\begin{equation*}
\begin{array}{ll}
\vspace{0.2cm}
E(m_1\otimes v_j)&=\left\{
\begin{array}{ll}
q^{-t}m_2\otimes v_j+q^{-1}m_1\otimes v_{j+1},& 1\<j\<n-1,\\
q^{-t}m_2\otimes v_n,& j=n,\\
\end{array}\right.\\
\vspace{0.2cm}
E(m_2\otimes v_j)&=\left\{
\begin{array}{ll}
qm_2\otimes v_{j+1}, & 1\<j\<n-1,\\
0,& j=n,\\
\end{array}\right.\\
\vspace{0.2cm}
F(m_1\otimes v_j)&=\left\{
\begin{array}{ll}
0,& j=1,\\
\gamma_{j-1}m_1\otimes v_{j-1},& 2\<j\<n,\\
\end{array}\right.\\
F(m_2\otimes v_j)&=\left\{
\begin{array}{ll}
\mathbf{q}^{(t-2r)n-t}m_1\otimes v_1, & j=1,\\
\mathbf{q}^{(t-2r-2(j-1))n-t}m_1\otimes v_j+\gamma_{j-1}m_2\otimes v_{j-1},& 2\<j\<n,\\
\end{array}\right.\\
\end{array}
\end{equation*}\\
$$K(m_1\otimes v_j)=\mathbf{q}^tq^{2(r+j-1)-1}m_1\otimes v_j,\ 1\<j\<n,$$ and
$$K(m_2\otimes v_j)=\mathbf{q}^tq^{2(r+j-1)+1}m_2\otimes v_j, \ 1\<j\<n.$$
Now let $\langle m_1\otimes v_1\rangle $ be the submodule of $M$ generated by $m_1\otimes v_1$. By a straightforward computation, $K(m_1\otimes v_1)=\mathbf{q}^tq^{2r-1}m_1\otimes v_1$ and $E^{n-1}(m_1\otimes v_1)=-q^tm_2\otimes v_{n-1}+qm_1\otimes v_n\neq 0$. Moreover, it is easy to see that $\{m_1\otimes v_1,E(m_1\otimes v_1),\cdots,E^{n-1}(m_1\otimes v_1)\}$ is a linearly independent set over $\Bbbk$. One can check that $F(m_1\otimes v_1)=0$ and $FE^j(m_1\otimes v_1)=\gamma'_jE^{j-1}(m_1\otimes v_1)$, where $\gamma'_j=\frac{1}{q-q^{-1}}(\mathbf{q}^{-t}\frac{q^{1-2r}-q^{1-2r-2j}}{1-q^{-2}}-\mathbf{q}^t
\frac{q^{2r-1}-q^{2r-1+2j}}{1-q^{2}})$ for $1\<j\<n-1$. Hence $\langle m_1\otimes v_1\rangle \cong {\bf V}(t,r+\frac{n-1}{2})$.
Similarly, let $u=m_1\otimes v_2-\gamma_1\mathbf{q}^{t-(t-2r)n}m_2\otimes v_1$, and let $\langle u\rangle $ be the submodule of $M$ generated by $u$. One can check that $Ku=\mathbf{q}^tq^{2r+1}u$, $E^{n-1}u=(-\gamma_1q^{-1}\mathbf{q}^{t-(t-2r)n}-q^{-t})m_2\otimes v_n\neq 0$, and $\{u,Eu,\cdots,E^{n-1}u\}$ are linearly independent over $\Bbbk$. Moreover, $Fu=0$ and $FE^ju=\gamma''_jE^{j-1}u$, where $\gamma''_j=\frac{1}{q-q^{-1}}(\mathbf{q}^{-t}\frac{q^{-2r-1}-q^{-2r-1-2j}}{1-q^{-2}}-\mathbf{q}^t
\frac{q^{2r+1}-q^{2r+1+2j}}{1-q^{2}})$, $1\<j\<n-1$. Therefore, we have $\langle u\rangle \cong {\bf V}(t,r+\frac{n+1}{2})$. It follows that $M\cong {\bf V}(t,r+\frac{n-1}{2})\oplus {\bf V}(t,r+\frac{n+1}{2})$ by comparing their dimensions.

Now let $N={\bf V}(t,r)\otimes V_2$. Then a similar argument to the above shows that $\langle v_1\otimes m_1\rangle \cong {\bf V}(t,r+\frac{n-1}{2})$ and $\langle q\gamma_1v_1\otimes m_2-v_2\otimes m_1\rangle \cong {\bf V}(t,r+\frac{n+1}{2})$. Hence $N\cong {\bf V}(t,r+\frac{n-1}{2})\oplus {\bf V}(t,r+\frac{n+1}{2})$. This completes the  proof.
\end{proof}
\begin{proposition}\label{5.4.3}
Let $1\<l\<n$, $1\<t\<n-1$ and $r\in\mathbb{Z}$.
\begin{enumerate}
	\item[(1)] If $l$ is odd, then $V_l\otimes {\bf V}(t,r)\cong {\bf V}(t,r)\otimes V_l \cong \oplus_{j=0}^{l-1}{\bf V}(t,r-\frac{l-1}{2}+j).$
	\item[(2)] If $l$ is even, then $V_l\otimes {\bf V}(t,r)\cong {\bf V}(t,r)\otimes V_l \cong \oplus_{j=0}^{l-1}{\bf V}(t,r+\frac{n-l+1}{2}+j).$
	\end{enumerate}
\end{proposition}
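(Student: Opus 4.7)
The plan is to proceed by induction on $l$, using $V_2$ as a generator and the Clebsch--Gordan relation $V_2\otimes V_l\cong V_{l-1}\oplus V_{l+1}$ for $2\<l\<n-1$, which holds in $\mathrm{Mod}\overline{U}_q$ and hence, via the inclusion $r(\overline{U}_q)\subset r(H)$ noted in the remark preceding the proposition, equally in $\mathrm{Mod}H$. The base cases are $l=1$ (trivial, since $V_1$ is the tensor unit) and $l=2$ (Lemma~\ref{5.4.2}). Note that the induction reaches $l=n$ through the Clebsch--Gordan step $V_2\otimes V_{n-1}\cong V_{n-2}\oplus V_n$; one never needs to analyze $V_2\otimes V_n$ itself.

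For the inductive step, assume the proposition for $l-1$ and $l$, with $2\<l\<n-1$. By the natural associator isomorphism in $\mathrm{Mod}H$, $(V_2\otimes V_l)\otimes\mathbf{V}(t,r)\cong V_2\otimes(V_l\otimes\mathbf{V}(t,r))$. The left-hand side equals $V_{l-1}\otimes\mathbf{V}(t,r)\oplus V_{l+1}\otimes\mathbf{V}(t,r)$, while the right-hand side is computed by expanding $V_l\otimes\mathbf{V}(t,r)\cong\oplus_{j}\mathbf{V}(t,s_j)$ by the induction hypothesis and then applying Lemma~\ref{5.4.2} termwise to get $\oplus_j[\mathbf{V}(t,s_j+\frac{n-1}{2})\oplus\mathbf{V}(t,s_j+\frac{n+1}{2})]$. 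Since $He_{n-t}$ is semisimple (Corollary~\ref{5.3.8}) and the ambient category is Krull--Schmidt, one may cancel $V_{l-1}\otimes\mathbf{V}(t,r)$ from both sides to obtain $V_{l+1}\otimes\mathbf{V}(t,r)$ as a direct sum of simples $\mathbf{V}(t,\cdot)$. A short case analysis on the parity of $l$---noting that $\frac{n-1}{2}$ and $\frac{n+1}{2}$ are the two ``half-period'' shifts modulo $n$---then shows that the surviving indices are precisely the $l+1$ consecutive integers modulo $n$ prescribed by the two formulas in the proposition.

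The same induction, driven by the second isomorphism in Lemma~\ref{5.4.2} and the associator $(\mathbf{V}(t,r)\otimes V_2)\otimes V_l\cong\mathbf{V}(t,r)\otimes(V_2\otimes V_l)$, handles $\mathbf{V}(t,r)\otimes V_l$ and produces the same list of summands, simultaneously proving the commutation $V_l\otimes\mathbf{V}(t,r)\cong\mathbf{V}(t,r)\otimes V_l$ (which cannot be read off directly from a braiding, as $H$ is only quasi-Hopf). The main obstacle I anticipate is the index bookkeeping across the parity split: one must verify that the $l-1$ summands of $V_{l-1}\otimes\mathbf{V}(t,r)$ coincide with $l-1$ of the $2l$ summands produced by $V_2\otimes(V_l\otimes\mathbf{V}(t,r))$, so that exactly the prescribed $l+1$ summands remain. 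This reduces to a telescoping computation in $\mathbb{Z}_n$; organized according to the parity of $l$, it is routine but needs care to avoid off-by-one errors.
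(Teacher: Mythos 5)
Your proposal is correct and follows essentially the same route as the paper: induction on $l$ with base cases $l=1,2$ (Lemma \ref{5.4.2}), computing $V_2\otimes V_l\otimes\mathbf{V}(t,r)$ in two ways via the Clebsch--Gordan rule $V_2\otimes V_l\cong V_{l-1}\oplus V_{l+1}$ and the induction hypothesis, and cancelling by Krull--Schmidt, with the parity bookkeeping handled exactly as you describe. The observation that $l=n$ is reached through the step at $l=n-1$ (so $V_2\otimes V_n\cong P_{n-1}$ is never needed) matches the paper's restriction to $2\leqslant l<n$ in the inductive step.
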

\begin{proof}
We only consider $V_l\otimes {\bf V}(t,r)$ since the proof for ${\bf V}(t,r)\otimes V_l$ is similar. We show it by induction on $l$. For $l=1$ and $l=2$, it follows from Lemma \ref{5.4.2} and that $V_1$ is the trivial $H$-module. Now let $2\<l<n$. If $l$ is odd, then $l-1$ and $l+1$ are even. By \cite[Proposition 3.1]{ChHasSun}, Lemma \ref{5.4.2} and the induction hypothesis, we have
 $$\begin{array}{rl}
V_2\otimes V_{l}\otimes {\bf V}(t,r)
\cong&V_{l-1}\otimes {\bf V}(t,r)\oplus V_{l+1}\otimes {\bf V}(t,r)\\
\cong&(\oplus_{j=0}^{l-2}{\bf V}(t,r+\frac{n-l+2}{2}+j))\oplus V_{l+1}\otimes {\bf V}(t,r)
\end{array}$$
and
 $$\begin{array}{rl}
V_2\otimes V_{l}\otimes {\bf V}(t,r)
\cong&\oplus_{j=0}^{l-1}V_2\otimes{\bf V}(t,r-\frac{l-1}{2}+j) \\
\cong& (\oplus_{j=0}^{l-1}{\bf V}(t,r+\frac{n-l}{2}+j))\oplus (\oplus_{j=0}^{l-1}{\bf V}(t,r+\frac{n+2-l}{2}+j)).
\end{array}$$
Thus, it follows from the Krull-Schmidt Theorem that
$$\begin{array}{c}
V_{l+1}\otimes {\bf V}(t,r)\cong \oplus_{j=0}^{l}{\bf V}(t,r+\frac{n-l}{2}+j).
\end{array}$$
If $l$ is even, then $l-1$ and $l+1$ are odd. Similarly, by \cite[Proposition 3.1]{ChHasSun} and the induction hypothesis, we have
 $$\begin{array}{rl}
V_2\otimes V_{l}\otimes {\bf V}(t,r)
\cong&V_{l-1}\otimes {\bf V}(t,r)\oplus V_{l+1}\otimes {\bf V}(t,r)\\
\cong&(\oplus_{j=0}^{l-2}{\bf V}(t,r-\frac{l-2}{2}+j))\oplus V_{l+1}\otimes {\bf V}(t,r)
\end{array}$$
and
 $$\begin{array}{rl}
V_2\otimes V_{l}\otimes {\bf V}(t,r)\cong&\oplus_{j=0}^{l-1}V_2\otimes{\bf V}(t,r+\frac{n-l+1}{2}+j) \\
\cong&(\oplus_{j=0}^{l-1}{\bf V}(t,r+\frac{2n-l}{2}+j))\oplus (\oplus_{j=0}^{l-1}{\bf V}(t,r+\frac{2n-l+2}{2}+j)).
\end{array}$$
Thus, it follows from the Krull-Schmidt Theorem that
$$\begin{array}{c}
V_{l+1}\otimes {\bf V}(t,r)\cong \oplus_{j=0}^{l}{\bf V}(t,r-\frac{l}{2}+j).
\end{array}$$
This completes the proof.
\end{proof}
	
Let $M$ and $N$ be two $H$-modules. Recall from \cite[Proposition 4.2.12]{EGNO}, that $M\otimes N$ is projective if either $M$ or $N$ is projective.

\begin{lemma}\label{L1}
Let $M$ be an $H$-module and $P$ a projective $H$-module. Assume that $S_1, S_2, \cdots, S_t$ are all non-isomorphic simple factors of $M$ with the multiplicities  $l_1, l_2, \cdots, l_t$, respectively. Then
$$M\otimes P\cong\oplus_{j=1}^tl_jS_j\otimes P \text{ and } P\otimes M\cong\oplus_{j=1}^tl_jP\otimes S_j.$$
\end{lemma}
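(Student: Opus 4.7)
The plan is to prove both isomorphisms by induction on the composition length of $M$, using the fact that tensoring with a projective module yields a projective module, combined with the Frobenius property of $H$ which forces the relevant short exact sequences to split.

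The first input I would invoke is that the finite-dimensional quasi-Hopf algebra $H$ is Frobenius (equivalently, the finite tensor category $\mathrm{Mod}H$ is Frobenius in the sense of EGNO, so projectives coincide with injectives). In particular, any short exact sequence of $H$-modules whose left-hand term is projective must split. The second input is the cited fact \cite[Proposition 4.2.12]{EGNO}: tensoring on either side with a projective module produces a projective module.

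With these in hand, pick a simple submodule $S \subseteq M$ and form the short exact sequence
$$0 \to S \to M \to M/S \to 0.$$
Tensoring with $P$ over the field $\Bbbk$ is exact, so it yields a short exact sequence
$$0 \to S \otimes P \to M \otimes P \to (M/S) \otimes P \to 0,$$
in which each term is projective by the second input. Since $S \otimes P$ is projective and hence injective, this sequence splits:
$$M \otimes P \cong S \otimes P \oplus (M/S) \otimes P.$$
The composition factors of $M$ are those of $M/S$ together with $S$ (with appropriate multiplicities), so applying the induction hypothesis to $M/S$ and collecting summands $S_j \otimes P$ by isomorphism type gives $M \otimes P \cong \bigoplus_{j=1}^{t} l_j\, S_j \otimes P$. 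The case $P \otimes M$ is symmetric: tensor on the left with $P$ and repeat the identical splitting argument.

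The main obstacle is justifying the splitting step, i.e.\ the Frobenius property of the quasi-Hopf algebra $H$. For finite-dimensional Hopf algebras this is classical (Larson--Sweedler); for finite-dimensional quasi-Hopf algebras it follows from the existence of integrals established by Hausser--Nill (alternatively, from the general Frobenius property of finite tensor categories in EGNO, Chapter 6). Once this point is granted, the rest of the proof is purely formal bookkeeping via induction, and no information about the specific structure of $H$, $\overline{U}_q$, or the simple modules $V_l, \mathbf{V}(t,r)$ is needed.
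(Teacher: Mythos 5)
Your proof is correct, but it takes a slightly different route from the paper's. The paper inducts on the radical length of $M$ and peels off the semisimple top layer via $0\to\mathrm{rad}(M)\to M\to M/\mathrm{rad}(M)\to 0$; after tensoring with $P$, that sequence splits simply because its \emph{third} term $(M/\mathrm{rad}(M))\otimes P$ is projective, which is an elementary fact requiring nothing beyond the definition of projectivity. You instead induct on composition length, peel off a simple \emph{submodule} $S$, and split the resulting sequence because its \emph{first} term $S\otimes P$ is projective hence injective. This works, but it makes the argument depend on the self-injectivity (Frobenius property) of the finite-dimensional quasi-Hopf algebra $H$ --- a true but nontrivial input (Hausser--Nill integrals, or the quasi-Frobenius property of finite tensor categories), and one you correctly flag as the main obstacle. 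The paper's choice of direction avoids this entirely, so its proof is more elementary and self-contained; your version is no less valid (and the paper does use projective $=$ injective elsewhere, e.g.\ in the proof of Theorem \ref{5.4.11}), but if you peel off a simple \emph{quotient} rather than a simple submodule, or peel off the whole top $M/\mathrm{rad}(M)$ as the paper does, the splitting becomes free and the appeal to the Frobenius property disappears.
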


\begin{proof}
We prove the lemma by induction on the radical length $r:={\rm rl}(M)$. If $r=1$ then $M\cong\oplus_{j=1}^tl_jS_j$.
Hence $M\otimes P\cong(\oplus_{j=1}^tl_jS_j)\otimes P\cong\oplus_{j=1}^tl_jS_j\otimes P$. Now let $r>1$. Suppose $M/{\rm rad}(M)\cong\oplus_{j=1}^tn_jS_j$. Then $(M/{\rm rad}(M))\otimes P\cong\oplus_{j=1}^tn_jS_j\otimes P$. Obviously, there is an exact sequence of $H$-modules
$$0\rightarrow{\rm rad}(M)\rightarrow M\rightarrow M/{\rm rad}(M)\rightarrow 0.$$
By tensoring with $P$, one obtains the exact sequence of $H$-modules:
$$0\rightarrow{\rm rad}(M)\otimes P\rightarrow M\otimes P\rightarrow(M/{\rm rad}(M))\otimes P\rightarrow 0.$$
 Since $(M/{\rm rad}(M))\otimes P$ is projective, the above exact sequence is split. Thus, by the induction hypothesis, we have
$$\begin{array}{rl}
M\otimes P\cong&{\rm rad}(M)\otimes P \oplus (M/{\rm rad}(M))\otimes P\\
\cong&(\oplus_{j=1}^t(l_j-n_j)S_j\otimes P)\oplus(\oplus_{j=1}^tn_jS_j\otimes P)\\
\cong&\oplus_{j=1}^tl_jS_j\otimes P.\\
\end{array}$$
Similarly, one can show that $P\otimes M\cong\oplus_{j=1}^tl_jP\otimes S_j$.
\end{proof}

Now we investigate the decomposition rules for the tensor products of the non-simple indecomposable $He_0$-modules and the simple $He_i$-modules, $1\<i\<n-1$.

\begin{proposition}\label{5.4.4}
Let $1\<l,t\<n-1$ and $r\in\mathbb{Z}$. Then
	$$P_l\otimes {\bf V}(t,r)\cong {\bf V}(t,r)\otimes P_l\cong\oplus_{j=0}^{n-1}2{\bf V}(t,j).$$
\end{proposition}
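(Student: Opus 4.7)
My plan is to reduce the statement to a direct application of Lemma~\ref{L1} combined with Proposition~\ref{5.4.3}, using the standard structure of the projective cover $P_l$ in $\mathrm{Mod}\,\overline{U}_q$. By Corollary~\ref{5.3.9}, $\mathbf{V}(t,r)$ is a projective simple $H$-module. Recall from the classification cited in Subsection~\ref{4.1} (see \cite{Ch4}) that for $1\<l\<n-1$ the indecomposable projective $P_l$ is $2n$-dimensional with composition factors $V_l$ and $V_{n-l}$, each occurring with multiplicity two. Viewing $P_l$ as an $H$-module through the algebra isomorphism $\psi$ from $He_0$ to $\overline{U}_q$ and applying Lemma~\ref{L1} with $P=\mathbf{V}(t,r)$ yields
$$P_l\otimes\mathbf{V}(t,r)\cong 2\bigl(V_l\otimes\mathbf{V}(t,r)\bigr)\oplus 2\bigl(V_{n-l}\otimes\mathbf{V}(t,r)\bigr).$$
The analogous identity for $\mathbf{V}(t,r)\otimes P_l$ follows in the same way from the companion form of Lemma~\ref{L1}.

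Since $n$ is odd, exactly one of $l$ and $n-l$ is odd, so I would handle the two parities in parallel by invoking the appropriate clause of Proposition~\ref{5.4.3}. When $l$ is odd, clause~(1) contributes the $l$ summands $\mathbf{V}(t,r-\tfrac{l-1}{2}+j)$ with $0\<j\<l-1$, while clause~(2) applied to the even integer $n-l$ contributes the $n-l$ summands $\mathbf{V}(t,r+\tfrac{l+1}{2}+j)$ with $0\<j\<n-l-1$. The two ranges of second indices abut (the top of the first is $r+\tfrac{l-1}{2}$, the bottom of the second is $r+\tfrac{l+1}{2}$), and together they form the $n$ consecutive integers $r-\tfrac{l-1}{2},\,r-\tfrac{l-1}{2}+1,\,\ldots,\,r+n-\tfrac{l+1}{2}$. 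When $l$ is even the two clauses swap roles, but the offsets again concatenate into a single block of $n$ consecutive integers centred around $r$.

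By Proposition~\ref{5.2.4}, the isomorphism class of $\mathbf{V}(t,r)$ depends on $r$ only modulo $n$, so any $n$ consecutive integers represent each element of $\mathbb{Z}_n$ exactly once. Consequently
$$V_l\otimes\mathbf{V}(t,r)\oplus V_{n-l}\otimes\mathbf{V}(t,r)\cong\bigoplus_{j=0}^{n-1}\mathbf{V}(t,j),$$
and doubling produces the asserted decomposition. The main obstacle is nothing more than the arithmetic bookkeeping needed to verify, in both parity cases, that the two arithmetic progressions produced by Proposition~\ref{5.4.3} glue into a single interval of length $n$; no further representation-theoretic input beyond the composition-factor structure of $P_l$ is required.
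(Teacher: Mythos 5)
Your proposal is correct and follows essentially the same route as the paper's proof: both use the projectivity of $\mathbf{V}(t,r)$ (Corollary \ref{5.3.9}), the composition factors $V_l$ and $V_{n-l}$ of $P_l$ with multiplicity two, Lemma \ref{L1}, and the parity cases of Proposition \ref{5.4.3} to see that the two arithmetic progressions of second indices concatenate into $n$ consecutive residues modulo $n$. The arithmetic bookkeeping you describe matches the paper's computation exactly.
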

\begin{proof}
By Corollary \ref{5.3.9}, ${\bf V}(t,r)$ is projective.
By the Section 2 of \cite{Ch4}, $P_l$ has two simple factors $V_l$ and $V_{n-l}$, whose multiplicities are $2$.
Then by Propositions \ref{5.4.3} and Lemma \ref{L1}, we have
$$\begin{array}{rl}
P_l\otimes{\bf V}(t,r)\cong&2V_l\otimes{\bf V}(t,r)\oplus 2V_{n-l}\otimes{\bf V}(t,r)\\
\cong&2{\bf V}(t,r)\otimes V_l\oplus 2{\bf V}(t,r)\otimes V_{n-l}
\cong{\bf V}(t,r)\otimes P_l.\\
\end{array}$$
Furthermore, if $l$ is odd, then $n-l$ is even, and hence
$$\begin{array}{rl}
V_l\otimes {\bf V}(t,r)\cong&\oplus_{j=0}^{l-1} {\bf V}(t,r-\frac{l-1}{2}+j),\\
V_{n-l}\otimes {\bf V}(t,r)\cong&\oplus_{j=0}^{n-l-1} {\bf V}(t,r+\frac{n-(n-l)+1}{2}+j)\cong\oplus_{j=l}^{n-1} {\bf V}(t,r-\frac{l-1}{2}+j).\\	
\end{array}$$
If $l$ is even, then $n-l$ is odd, and so
$$\begin{array}{rl}
V_l\otimes {\bf V}(t,r)\cong&\oplus_{j=0}^{l-1} {\bf V}(t,r+\frac{n-l+1}{2}+j),\\
V_{n-l}\otimes {\bf V}(t,r)\cong&\oplus_{j=0}^{n-l-1} {\bf V}(t,r-\frac{n-l-1}{2}+j)\cong\oplus_{j=l}^{n-1} {\bf V}(t,r+\frac{n-l+1}{2}+j).\\
	\end{array}$$
In any case, we always have $P_l\otimes {\bf V}(t,r)\cong{\bf V}(t,r)\otimes P_l\cong\oplus_{j=0}^{n-1}2{\bf V}(t,j)$.
\end{proof}

\begin{corollary}\label{5.4.7}
Let $s\>1$, $1\<t,l\<n-1$ and  $r\in\mathbb{Z}$.
\begin{enumerate}
	\item[(1)] If both $s$ and $l$ are odd, then
$$\begin{array}{rl}
&\Omega^{\pm s}V_l\otimes {\bf V}(t,r)\cong {\bf V}(t,r)\otimes \Omega^{\pm s}V_l\\
\cong&(\oplus_{j=0}^{n-1}s{\bf V}(t,j))\oplus(\oplus_{j=l}^{n-1} {\bf V}(t,r-\frac{l-1}{2}+j)).
\end{array}$$
\item[(2)] If $s$ is odd but $l$  is even, then
$$\begin{array}{rl}
&\Omega^{\pm s}V_l\otimes {\bf V}(t,r)\cong {\bf V}(t,r)\otimes \Omega^{\pm s}V_l\\
\cong&(\oplus_{j=0}^{n-1}s{\bf V}(t,j))\oplus(\oplus_{j=l}^{n-1} {\bf V}(t,r+\frac{n-l+1}{2}+j)).
\end{array}$$
\item[(3)] If $s$ is even and $l$ is odd, then
$$\begin{array}{rl}
&\Omega^{\pm s}V_l\otimes {\bf V}(t,r)\cong {\bf V}(t,r)\otimes \Omega^{\pm s}V_l\\
\cong&(\oplus_{j=0}^{n-1}s{\bf V}(t,j))\oplus(\oplus_{j=0}^{l-1} {\bf V}(t,r-\frac{l-1}{2}+j)).
\end{array}$$
\item[(4)] If both $s$ and $l$ are even, then
$$\begin{array}{rl}
&\Omega^{\pm s}V_l\otimes {\bf V}(t,r)\cong {\bf V}(t,r)\otimes \Omega^{\pm s}V_l\\
\cong&(\oplus_{j=0}^{n-1}s{\bf V}(t,j))\oplus(\oplus_{j=0}^{l-1} {\bf V}(t,r+\frac{n-l+1}{2}+j)).
\end{array}$$
\end{enumerate}
\end{corollary}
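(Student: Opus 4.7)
The plan is to reduce the computation to tensor products of simple modules already handled by Proposition~\ref{5.4.3}. By Corollary~\ref{5.3.9}, $\mathbf{V}(t,r)$ is a projective $H$-module, so Lemma~\ref{L1} gives
\begin{equation*}
\Omega^{\pm s} V_l \otimes \mathbf{V}(t,r) \cong a_s\, V_l \otimes \mathbf{V}(t,r) \oplus b_s\, V_{n-l} \otimes \mathbf{V}(t,r),
\end{equation*}
and symmetrically for the other ordering, where $a_s$ and $b_s$ denote the multiplicities of the only two simple factors $V_l$ and $V_{n-l}$ appearing in $\Omega^{\pm s} V_l$ (both belong to the block of $V_l$, as is clear from Subsection~\ref{4.1}). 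Thus the entire calculation reduces to (i) identifying $(a_s, b_s)$ and (ii) combining the formulas from Proposition~\ref{5.4.3}.

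For step (i) I would argue by induction on $s$. The base case $s=1$ follows from $0 \to \Omega V_l \to P_l \to V_l \to 0$: by the proof of Proposition~\ref{5.4.4}, $P_l$ has $V_l$ and $V_{n-l}$ each occurring with multiplicity $2$, so $(a_1,b_1)=(1,2)$. For the inductive step, one uses $0 \to \Omega^{s+1} V_l \to P(\Omega^s V_l/\mathrm{rad}) \to \Omega^s V_l \to 0$ together with the fact that both indecomposable projectives $P_l,P_{n-l}$ in this block have the same composition factors $2V_l+2V_{n-l}$. Since each string module $\Omega^s V_l$ is indecomposable with simple top (alternating between $V_l$ and $V_{n-l}$ as $s$ grows, by the explicit description recorded in \cite{Ch4}), the recursion forces $(a_s,b_s)=(s,s+1)$ when $s$ is odd and $(s+1,s)$ when $s$ is even. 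For $\Omega^{-s}V_l$ the same formulas hold by duality, because $\overline{U}_q$ is self-injective and $P_l$ is also the injective hull of $V_l$ (as stated in Subsection~\ref{4.1}).

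For step (ii), since $n$ is odd, $l$ and $n-l$ have opposite parity, so Proposition~\ref{5.4.3} applies to both summands with different branches. A simple re-indexing of the direct summands then yields the key identity
\begin{equation*}
V_l \otimes \mathbf{V}(t,r) \oplus V_{n-l} \otimes \mathbf{V}(t,r) \cong \oplus_{j=0}^{n-1} \mathbf{V}(t,j),
\end{equation*}
valid for every $r\in\mathbb{Z}_n$. In each of the four cases, $\min(a_s,b_s)=s$, so extracting $s$ common copies of $V_l\oplus V_{n-l}$ produces the term $s\oplus_{j=0}^{n-1}\mathbf{V}(t,j)$; the remaining single copy is $V_{n-l}\otimes\mathbf{V}(t,r)$ if $s$ is odd and $V_l\otimes\mathbf{V}(t,r)$ if $s$ is even, which by Proposition~\ref{5.4.3} (applied according to the parity of $l$) is exactly the second summand in each of the four assertions.

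The main obstacle is step (i): rigorously establishing the multiplicities $(a_s,b_s)$. Once one accepts the structure of the string modules $\Omega^{\pm s}V_l$ from \cite{Ch4}—in particular that their projective covers in each step are single indecomposable projectives with top alternating between $V_l$ and $V_{n-l}$—the inductive argument is routine. The rest of the proof is a bookkeeping combination of Lemma~\ref{L1}, Corollary~\ref{5.3.9}, and Proposition~\ref{5.4.3}.
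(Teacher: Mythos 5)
Your proposal is correct and follows essentially the same route as the paper: the paper likewise takes the composition multiplicities of $V_l$ and $V_{n-l}$ in $\Omega^{\pm s}V_l$ (namely $(s,s+1)$ for $s$ odd and $(s+1,s)$ for $s$ even) from \cite{Ch4} and then applies the argument of Proposition \ref{5.4.4}, i.e.\ Lemma \ref{L1} together with Proposition \ref{5.4.3} and the re-indexing identity $V_l\otimes{\bf V}(t,r)\oplus V_{n-l}\otimes{\bf V}(t,r)\cong\oplus_{j=0}^{n-1}{\bf V}(t,j)$. Only a minor inaccuracy in your optional inductive rederivation of the multiplicities: the projective cover of $\Omega^{s}V_l$ is a direct sum of $s+1$ copies of one indecomposable projective (its top is $(s+1)$ copies of a single simple), not a single indecomposable projective, but this does not affect the recursion or the conclusion.
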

\begin{proof}
By the Section 3 of \cite{Ch4}, $\Omega^{\pm s}V_l$ has two simple factors $V_l$ and $V_{n-l}$.
When $s$ is odd, $V_l$ and $V_{n-l}$ have the multiplicities $s$ and $s+1$, as simple factors of $\Omega^{\pm s}V_l$, respectively. When $s$ is even, $V_l$ and $V_{n-l}$ have the multiplicities $s+1$ and $s$, as simple factors of $\Omega^{\pm s}V_l$, respectively. Then the corollary follows from an argument similar to the proof of Proposition \ref{5.4.4}.
\end{proof}

\begin{corollary}\label{5.4.9}
Let $s\>1$, $1\<l,t\<n-1$, $r\in\mathbb{Z}$ and $\eta \in \overline{\Bbbk}$. Then
$$M_s(l,\eta)\otimes {\bf V}(t,r)\cong {\bf V}(t,r)\otimes M_s(l,\eta)\cong\oplus_{j=0}^{n-1}s{\bf V}(t,j).$$
\end{corollary}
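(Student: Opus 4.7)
The plan is to follow the exact template of Proposition~\ref{5.4.4} and Corollary~\ref{5.4.7}, whose common engine is the identity from \leref{L1}: once one tensor factor is projective, the tensor product depends only on the composition factors (with multiplicities) of the other factor. Since $1\<t\<n-1$, \coref{5.3.9} tells us that ${\bf V}(t,r)$ is projective, so this machinery applies to both $M_s(l,\eta)\otimes {\bf V}(t,r)$ and ${\bf V}(t,r)\otimes M_s(l,\eta)$.

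The first step is to record the composition factors of the band module $M_s(l,\eta)$. By the classification of indecomposable $H_n(1,q^2)$-modules in \cite{Ch4} (restricted to $\ol{U}_q$-modules as described in Subsection~\ref{4.1}), the band module $M_s(l,\eta)$ has dimension $sn$ and its only simple factors are $V_l$ and $V_{n-l}$, each occurring with multiplicity $s$. This is the one external fact I need; everything else is then a bookkeeping argument using \prref{5.4.3}.

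Applying \leref{L1} with $P={\bf V}(t,r)$ and $M=M_s(l,\eta)$ gives
\[
M_s(l,\eta)\otimes{\bf V}(t,r)\cong sV_l\otimes{\bf V}(t,r)\oplus sV_{n-l}\otimes{\bf V}(t,r),
\]
and symmetrically for the other order. Now I invoke \prref{5.4.3}: when $l$ is odd (so $n-l$ is even), the two summands $V_l\otimes {\bf V}(t,r)$ and $V_{n-l}\otimes {\bf V}(t,r)$ contribute $\oplus_{j=0}^{l-1}{\bf V}(t,r-\tfrac{l-1}{2}+j)$ and $\oplus_{j=l}^{n-1}{\bf V}(t,r-\tfrac{l-1}{2}+j)$ respectively, and together they sweep out exactly one copy of each ${\bf V}(t,j)$ as $j$ ranges over $\mathbb{Z}_n$; the case $l$ even is analogous, as already verified inside the proof of \prref{5.4.4}. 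Multiplying by the common multiplicity $s$ yields $\oplus_{j=0}^{n-1}s{\bf V}(t,j)$.

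The only potential obstacle is confirming the multiplicities of $V_l$ and $V_{n-l}$ in $M_s(l,\eta)$, since the dimension count $s\cdot l + s\cdot(n-l)=sn$ must match and the multiplicities must be independent of $\eta$; but this is exactly what is recorded for the band modules in \cite{Ch4}. Everything else reduces to the residue-class computation already performed in \prref{5.4.4} and \coref{5.4.7}, so no new calculation is required. The argument for ${\bf V}(t,r)\otimes M_s(l,\eta)$ is identical, using the second half of \prref{5.4.3}.
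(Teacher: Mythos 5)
Your proposal is correct and follows essentially the same route as the paper: the paper's proof likewise records (from Section~3 of \cite{Ch4}) that $M_s(l,\eta)$ has the two composition factors $V_l$ and $V_{n-l}$, each with multiplicity $s$, and then appeals to the projectivity of ${\bf V}(t,r)$, \leref{L1}, and the residue-class computation already carried out in the proof of \prref{5.4.4}. Your write-up simply makes explicit the steps the paper compresses into ``an argument similar to the proof of Proposition~\ref{5.4.4}.''
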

\begin{proof}
By the Section 3 of \cite{Ch4}, $M_s(l,\eta)$ has two simple factors $V_l$ and $V_{n-l}$ with the same multiplicity $s$. Then the corollary follows from an argument similar to the proof of Proposition \ref{5.4.4}.
\end{proof}
Finally, we investigate the decomposition rules for the tensor products of the simple $He_i$-modules and the simple $He_j$-modules, $1\<i,j\<n-1$.

\begin{proposition}\label{5.4.10}
Let $1\<t,t'\<n-1$ with $t+t'\neq n$, and $r,r'\in\mathbb{Z}$.\\
$(1)$ If $t+t'<n$, then ${\bf V}(t,r)\otimes {\bf V}(t',r')\cong\oplus_{j=0}^{n-1}{\bf V}(t+t',j)$.\\
$(2)$ If $t+t'>n$, then ${\bf V}(t,r)\otimes {\bf V}(t',r')\cong\oplus_{j=0}^{n-1}{\bf V}(t+t'-n,j)$.
\end{proposition}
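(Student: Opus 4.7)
The approach is to realise $M := {\bf V}(t,r) \otimes {\bf V}(t',r')$ as a semisimple module over a single block $He_{i_0}$ of $H$ and then pin down the isomorphism class of every summand via the $K$-action on $\ker_M F$. On ${\bf V}(t,r)$ the central element $K^n$ acts as the scalar $q^t$ (raise $\mathbf{q}^{t+2n(r+i-1)}$ to the $n$-th power and use $\mathbf{q}^{n^2}=1$), so on $M$ it acts as $q^{t+t'}$, and the idempotent $e_{i_0}$ with $i_0 \equiv -(t+t') \pmod{n}$ acts as the identity. The hypothesis $t+t' \neq n$ is exactly what forces $i_0 \in \{1,\ldots,n-1\}$, so $M$ is an $He_{i_0}$-module, which is semisimple by \coref{5.3.8}. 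By \prref{5.3.7} its simple summands must be of the form ${\bf V}(a,j)$ with $a = n-i_0$ (equal to $t+t'$ in case~(1) and to $t+t'-n$ in case~(2)) and $j \in \mathbb{Z}_n$. Writing $M \cong \bigoplus_{j \in \mathbb{Z}_n} n_j\, {\bf V}(a,j)$ and counting dimensions gives $\sum_j n_j = n$, so it suffices to show $n_j = 1$ for every $j$.

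By the $K$-eigenvalue argument in the proof of \prref{5.2.4}, the simple ${\bf V}(a,j)$ is characterised inside $\{{\bf V}(a,j') : j' \in \mathbb{Z}_n\}$ by the $K$-weight $\mathbf{q}^a q^{2j}$ on its one-dimensional $F$-kernel. Therefore $n_j$ equals the dimension of the $\mathbf{q}^a q^{2j}$-eigenspace of $K$ inside $\ker_M F$, and I would proceed by computing $\ker_M F$ directly. The formula for $\Delta(F)$ displayed at the start of this section, combined with the fact that $K^n$ acts as the scalar $q^{t'}$ on ${\bf V}(t',r')$, simplifies to
\begin{equation*}
\Delta(F)(v_i \otimes w_k) = c_i\, \gamma_{i-1} \lambda_k^{-1}\, v_{i-1} \otimes w_k + \gamma'_{k-1}\, v_i \otimes w_{k-1},
\end{equation*}
where $c_i \in \{1, q^{t'}\}$ encodes the $T_3/T_4$ dichotomy, $\gamma_{i-1}$ and $\gamma'_{k-1}$ are the non-zero $F$-structure constants on ${\bf V}(t,r)$ and ${\bf V}(t',r')$ from Subsection~\ref{5.2}, and $\lambda_k$ is the $K$-eigenvalue on $w_k$. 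The triangular shape of this formula shows that $\Delta(F)$ respects the diagonal decomposition $M = \bigoplus_{s=2}^{2n} D_s$ with $D_s$ the span of $\{v_i \otimes w_{s-i}\}$, sending $D_s \to D_{s-1}$; moreover each $D_s$ sits inside a single $K$-weight space, and the weights are parametrised bijectively by $s \bmod n$.

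Writing a candidate kernel element as $x = \sum_i \alpha_i v_i \otimes w_{s-i} \in D_s$ and setting $\Delta(F) x = 0$ yields the one-term recurrence
\begin{equation*}
\alpha_{i+1}\, c_{i+1} \gamma_i \lambda_{s-i-1}^{-1} = -\alpha_i\, \gamma'_{s-i-1}.
\end{equation*}
For $s \in \{2, \ldots, n+1\}$ this recurrence admits a one-parameter family of solutions that terminates naturally at the boundary $\gamma_0 = 0$, giving $\dim(\ker \Delta(F)|_{D_s}) = 1$; for $s \in \{n+2, \ldots, 2n\}$ the missing basis vector $w_{n+1}$ of ${\bf V}(t',r')$ produces an extra boundary equation forcing $\alpha_{s-n} = 0$, which then propagates through the recurrence to kill every solution, so $\dim(\ker \Delta(F)|_{D_s}) = 0$. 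Summing, $\dim \ker_M F = n$, and the diagonals $D_2, \ldots, D_{n+1}$ hit each of the $n$ distinct $K$-weights of $M$ exactly once, forcing $n_j = 1$ for every $j \in \mathbb{Z}_n$. The main obstacle is precisely this diagonal/boundary bookkeeping: the recurrence itself is routine once $\Delta(F)$ is in the simplified form above, but recognising that the $w_{n+1}$-boundary selectively kills exactly the upper half of the diagonals is what ultimately pins the multiplicity of each simple summand to one.
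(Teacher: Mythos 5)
Your proposal is correct and follows essentially the same route as the paper: both arguments reduce to computing $\ker F$ diagonal by diagonal in ${\bf V}(t,r)\otimes{\bf V}(t',r')$ via the explicit formula for $\Delta(F)$, finding that exactly the diagonals $D_2,\dots,D_{n+1}$ contribute a one-dimensional kernel, and then separating the summands by the $K$-weight on the $F$-kernel. The only cosmetic difference is that you identify the block $He_{i_0}$ through the central idempotents and invoke Corollary~\ref{5.3.8} up front, whereas the paper reaches the same conclusion (every indecomposable summand is some ${\bf V}(l,i)$) from the $K$-eigenvalues together with Theorem~\ref{5.3.10}; the substance is identical.
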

\begin{proof}
Let $M={\bf V}(t,r)\otimes {\bf V}(t',r')$. Assume that $\{m_1,m_2,\cdots,m_n\}$ and $\{v_1,v_2,\cdots,v_n\}$ are the standard bases of ${\bf V}(t,r)$
and ${\bf V}(t',r')$, respectively. Then $\{m_i\otimes v_j|1\<i,j\<n\}$ is a basis of $M$. The $H$-module actions on ${\bf V}(t,r)$ and on ${\bf V}(t',r')$ are given respectively by
 $$Km_i=\mathbf{q}^tq^{2(r+i-1)}m_i, \ Kv_i=\mathbf{q}^{t'}q^{2(r'+i-1)}v_i,\ 1\<i\<n,$$
 \begin{equation*}
\begin{array}{ll}
Em_i=\left\{
\begin{array}{ll}
m_{i+1}, & 1\<i<n,\\
0,& i=n\\
\end{array}\right.&
Ev_i=\left\{
\begin{array}{ll}
v_{i+1}, & 1\<i<n,\\
0,& i=n,\\
\end{array}\right.\\
\end{array}
\end{equation*}

 \begin{equation*}
	\begin{array}{ll}
		Fm_i=\left\{
		\begin{array}{ll}
			0, & i=1,\\
			\gamma_{i-1}m_{i-1},& 2\<i\<n\\
		\end{array}\right.&
		Fv_i=\left\{
		\begin{array}{ll}
			0, & i=1,\\
			\gamma'_{i-1}v_{i-1},& 2\<i\<n,\\
		\end{array}\right.\\
	\end{array}
\end{equation*}

where $\gamma_j$ and $\gamma'_j$ are given in Subsection 3.2, $1\<j\<n-1$. For $2\<l\<2n$ and $s\in\mathbb Z$, let $M_l={\rm span}\{m_i\otimes v_j|1\<i,j\<n,i+j=l\}$ and $T^s=\{x\in M|Kx=\mathbf{q}^{t+t'}q^{2(r+r'+s-2)}x\}$. Then $T^s={\rm span}\{m_i\otimes v_j|i+j\equiv s\ ( {\rm mod} \ n)\}$ and $M=\oplus_{l=2}^{2n}M_l=\oplus_{s=2}^{n+1} T^s$. Moreover,
\begin{equation*}
\begin{array}{ll}
{\rm dim}(M_l)=\left\{
\begin{array}{ll}
l-1, & 2\<l\<n+1,\\
2n-l+1,& n+1<l\<2n.\\
\end{array}\right.&
\end{array}
\end{equation*}
Define a linear endomorphism $f$ of $M$ by $f(x)=Fx$, $x\in M$. It is easy to see that $f(M_l)\subseteq M_{l-1}$ for all $2\<l\<2n$, where $M_1=0$. Hence ${\rm ker}f=\oplus_{l=2}^{2n}{\rm ker}f\cap M_l$ and $FM_2=0$. Obviously, dim$({\rm ker}f\cap M_2)=1$. Let $3\<l\<n+1$ and $x\in M_l$. Then $x=\sum_{i=1}^{l-1}\a_im_i\otimes v_{l-i}$ for some $\a_i\in \Bbbk$, and hence
$$\begin{array}{rl}
Fx=&\sum_{i=1}^{l-1}\a_i(F\otimes 1)(T_3\otimes K^{n^2-1}+T_4\otimes K^{n-1})(m_i\otimes v_{l-i})\\
&+\sum_{i=1}^{l-1}\a_i(1\otimes F)(m_i\otimes v_{l-i})\\
=&\sum_{i=2}^{l-1}\a_{i}\gamma_{i-1}\b_{l-i}m_{i-1}\otimes v_{l-i}+\sum_{i=1}^{l-2}\a_i \gamma'_{l-i-1} m_{i}\otimes v_{l-i-1}\\
=&\sum_{i=1}^{l-2}\a_{i+1}\gamma_{i}\b_{l-i-1}m_{i}\otimes v_{l-i-1}+\sum_{i=1}^{l-2}\a_i \gamma'_{l-i-1} m_{i}\otimes v_{l-i-1}\\
=&\sum_{i=1}^{l-2}(\a_i\gamma'_{l-i-1}+\a_{i+1}\gamma_i\b_{l-i-1})m_i\otimes v_{l-i-1},
\end{array}$$
where $0\neq\b_j\in \Bbbk$ for $1\<j\<l-2$. It follows that $x\in {\rm ker}f\Leftrightarrow$ $\a_i\gamma'_{l-i-1}+\a_{i+1}\gamma_i\b_{l-i-1}=0$ for all $1\<i\<l-2$, which implies that dim(${\rm ker}f\cap M_l)=1$.

Now let $n+1<l\<2n$ and $x\in M_l$. Then $x=\sum_{i=l-n}^{n}\a_im_i\otimes v_{l-i}$ for some $\a_i\in \Bbbk$, and hence
$$\begin{array}{rl}
Fx=&\sum_{i=l-n}^{n}\a_i(F\otimes 1)(T_3\otimes K^{n^2-1}+T_4\otimes K^{n-1})(m_i\otimes v_{l-i})\\
&+\sum_{i=l-n}^{n}\a_i(1\otimes F)(m_i\otimes v_{l-i})\\
=&\sum_{i=l-n}^{n}\a_{i}\gamma_{i-1}\b_{l-i}m_{i-1}\otimes v_{l-i}+\sum_{i=l-n}^{n}\a_i \gamma'_{l-i-1} m_{i}\otimes v_{l-i-1}\\
=&\sum_{i=l-n}^{n}\a_{i}\gamma_{i-1}\b_{l-i}m_{i-1}\otimes v_{l-i}+\sum_{i=l-n+1}^{n+1}\a_{i-1} \gamma'_{l-i} m_{i-1}\otimes v_{l-i}\\
=&\a_{l-n}\b_n\gamma_{l-n-1}m_{l-n-1}\otimes v_n+\a_{n}\gamma'_{l-n-1}m_{n}\otimes v_{l-n-1}.\\
&+\sum_{l-n+1\<i\<n}(\a_i\b_{l-i}\gamma_{i-1}+\a_{i-1}\gamma'_{l-i})m_{i-1}\otimes v_{l-i},\\
\end{array}$$ where $0\neq\b_j\in \Bbbk$ for $l-n\<j\<n.$
It follows that $x\in {\rm ker}f\Leftrightarrow \a_i=0$ for all $l-n\<i\<n$ $\Leftrightarrow x=0$. Consequently, ${\rm ker}f\cap M_l=\{0\} $ for all $n+1<l\<2n$. Thus, one gets dim$({\rm ker}f\cap T^s)=1$ for all $2\<s\<n+1$,
and ${\rm ker}f=\oplus_{s=2}^{n+1}{\rm ker}f\cap T^s$.

Let $\phi_K$ be the linear endomorphism of $M$ defined by $\phi_K(x)=Kx$, $x\in M$. Then $\mathbf{q}^{t+t'}q^{2(r+r')},\mathbf{q}^{t+t'}q^{2(r+r'+1)},\cdots,\mathbf{q}^{t+t'}q^{2(r+r'+n-1)}$ are all distinct eigenvalues of $\phi_K$. It follows from $t+t'\neq n$ that each simple submodule of $M$ is isomorphic to ${\bf V}(l, i)$ for some $1\<l\<n-1$ and $0\<i\<n-1$. Thus by Theorem \ref{5.3.10}, each indecomposable summand of $M$ is isomorphic to some ${\bf V}(l, i)$. Moreover, by dim$({\rm ker}f\cap T^s)=1$ for all $0\<s\<n-1$ and the structure of ${\bf V}(l,i)$, all the indecomposable  summands of $M$ are pairwise nonisomorphic, and  the number of the indecomposable summands of $M$ is equal to $n$.
Consequently,  if $t+t'<n$ then $M\cong \oplus_{j=0}^{n-1}{\bf V}(t+t',j)$, and if  $t+t'>n$ then
 $M\cong \oplus_{j=0}^{n-1}{\bf V}(t+t'-n,j)$.
\end{proof}

\begin{theorem}\label{5.4.11}
Let $1\<t,t'\<n-1$ and $r,r'\in\mathbb Z$. Assume  $t+t'= n$ and $r+r'\equiv u \ ({\rm mod}\ n)$ with $0\<u\<n-1$.
Let $P_0=0$.
\begin{enumerate}
	\item[(1)] If $0\<u\<\frac{n-1}{2}$, then
$${\bf V}(t,r)\otimes {\bf V}(t',r')\cong(\oplus_{j=u}^{\frac{n-1}{2}}P_{2j})\oplus V_n\oplus (\oplus_{1\<j\<u-1}P_{n-2u+2j}).$$
\item[(2)] If $\frac{n+1}{2}\<u\<n-1$, then
$$\begin{array}{c}
{\bf V}(t,r)\otimes {\bf V}(t',r')\cong(\oplus_{n+1-u\<j\<\frac{n-1}{2}}P_{2j})\oplus(\oplus_{j=0}^{n-u}P_{2u+2j-n}).\\
\end{array}$$
\end{enumerate}
\end{theorem}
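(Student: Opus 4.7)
My plan has three stages. First, I would show that $M:=\mathbf{V}(t,r)\otimes\mathbf{V}(t',r')$ is a projective $\overline{U}_q$-module. Since $t+t'=n$ and $\mathbf{q}^n=q$, $K^n$ acts as $q^n=1$ on every $m_i\otimes v_j\in M$, so $M\in\mathrm{Mod}\,He_0$, equivalently a $\overline{U}_q$-module via $\psi$. Each tensor factor is projective as an $H$-module by Corollary~\ref{5.3.9}, and tensoring with a projective preserves projectivity (\cite[Proposition~4.2.12]{EGNO}), so $M$ is projective as a $\overline{U}_q$-module. The indecomposable projective $\overline{U}_q$-modules are $P_1,\dots,P_{n-1}$ and $V_n=P_n$ (Subsection~\ref{4.1}), so $M\cong\bigoplus_{l=1}^n a_l P_l$ for some multiplicities $a_l\ge 0$.

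Second, I would compute each $a_l$ via the rigid-monoidal adjunction
\[
a_l=\dim_{\Bbbk}\mathrm{Hom}_H(M,V_l)\;\cong\;\dim_{\Bbbk}\mathrm{Hom}_H(\mathbf{V}(t',r'),\mathbf{V}(t,r)^*\otimes V_l).
\]
The left dual $\mathbf{V}(t,r)^*$, built from the antipode $S$ and the element $\alpha$ of Subsection~\ref{C1.1.4}, is a simple $H$-module in $\mathrm{Mod}\,He_t=\mathrm{Mod}\,He_{n-t'}$, so by Corollary~\ref{5.2.6} it is isomorphic to $\mathbf{V}(t',r^*)$ for some $r^*\in\mathbb{Z}_n$; matching the multiset of $K$-eigenvalues of the two sides yields $r^*\equiv -r-(n-1)/2\pmod n$. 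Proposition~\ref{5.4.3} then decomposes $\mathbf{V}(t',r^*)\otimes V_l$ as $\bigoplus_{j=0}^{l-1}\mathbf{V}(t',r^*+c_{l,j})$, with $c_{l,j}=-(l-1)/2+j$ if $l$ is odd and $c_{l,j}=(n-l+1)/2+j$ if $l$ is even. By Proposition~\ref{5.2.4}, $a_l$ counts those $j\in\{0,\dots,l-1\}$ with $r'\equiv r^*+c_{l,j}\pmod n$; since the shifts $c_{l,j}$ take $l\le n$ distinct residues modulo $n$, we obtain $a_l\in\{0,1\}$, and $a_l=1$ iff $u^*:=u+(n-1)/2\pmod n$ lies in $\{c_{l,0},\dots,c_{l,l-1}\}$ modulo $n$.

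Third, I would convert the condition ``$u^*\in$ shift range of $V_l$'' into the theorem's index sets. For $0\le u\le(n-1)/2$, $u^*$ lies in $\{(n-1)/2,\dots,n-1\}$ without wrap-around, and the $l$'s with $a_l=1$ are exactly the even $l\in\{2u,2u+2,\dots,n-1\}$, the odd $l\in\{n-2u+2,\dots,n-2\}$ (with $l<n$), and $l=n$; these are precisely the summands of case~(1). For $(n+1)/2\le u\le n-1$, $u^*$ wraps past $0$, and the analogous analysis yields the two arithmetic progressions of case~(2), with $P_n=V_n$ appearing as the terminal entry of the second sum. The main technical obstacle is the explicit computation of the left dual $\mathbf{V}(t,r)^*$ in the quasi-Hopf setting (where $S$, $\alpha$, and potentially the associator $\Phi_c$ enter the definition of evaluation and coevaluation), together with the careful sorting of the surviving $l$'s into the theorem's two cases according to the wrap-around pattern of $u^*$ modulo $n$.
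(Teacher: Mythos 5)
Your overall strategy (projectivity of $M$, hence $M\cong\oplus_l a_lP_l$; then $a_l=\dim\Hom_H(M,V_l)$ computed by the duality adjunction and Propositions \ref{5.4.3} and \ref{5.2.4}) is a genuinely different route from the paper, which instead analyzes $\ker F$ and $\ker E$ on $M$ directly: it shows $\dim(\ker F\cap T^s)=1$ for each $K$-eigenspace $T^s$, deduces that the socle of $M$ is multiplicity-free, rules out specific simples $V_l$ from the socle by an injectivity argument for powers of $E$ on the graded pieces $M_l$, and finishes by a dimension count. Your method, if completed, would buy a cleaner multiplicity formula $a_l\in\{0,1\}$ without the case-by-case contradiction arguments.

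However, there is a concrete gap in Stage 2: you claim to determine $r^*$ with $\mathbf{V}(t,r)^*\cong\mathbf{V}(t',r^*)$ by ``matching the multiset of $K$-eigenvalues of the two sides.'' This cannot work. Since $n$ is odd, $2$ is invertible mod $n$, so the multiset of $K$-eigenvalues of $\mathbf{V}(t',s)$ is $\{\mathbf{q}^{t'}q^{2(s+i-1)}\mid 1\<i\<n\}=\{\mathbf{q}^{t'}\zeta\mid \zeta^n=1\}$, which is \emph{independent of} $s$; the modules $\mathbf{V}(t',s)$, $s\in\mathbb{Z}_n$, are pairwise non-isomorphic but have identical $K$-characters, being distinguished only by the $K$-eigenvalue on the one-dimensional space $\mathbf{V}(t',s)_F$. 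So to pin down $r^*$ you must actually compute the $H$-action on the dual space --- in particular the action of $F$ via $S(F)=\chi^{-1}(K)\sum_g\chi_g(K)\mathbf{F}_2(g)F1_g$ together with the $\alpha$-, $\beta$- and associator-twisted evaluation/coevaluation maps of the quasi-Hopf module category --- locate the vector annihilated by $F$, and read off its $K$-eigenvalue. (You also need to fix which dual, left or right, the adjunction $\Hom(X\otimes Y,Z)\cong\Hom(Y,{}^{*}X\otimes Z)$ requires; in a quasi-Hopf setting these need not coincide.) Your claimed value $r^*\equiv -r-\frac{n-1}{2}$ is consistent with the theorem in the low-$l$ cases I checked, so the plan is salvageable, but as written the key identification is unjustified; the same remark applies to Stage 3, where the translation into the theorem's index sets is asserted rather than verified.
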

\begin{proof}
Let $M={\bf V}(t,r)\otimes {\bf V}(t',r')$. We use the notations in the proof of  Proposition \ref{5.4.10}. Then $T^{s+n}=T^s$ for any $s\in\mathbb Z$, and  $M=\oplus_{l=2}^{2n}M_l=\oplus_{s=2}^{n+1} T^s$. Moreover, ${\rm dim}({\rm ker}(f)\cap T^s)=1$ for any $s$, ${\rm dim}({\rm ker}(f)\cap M_l)=1$ for $2\<l\<n+1$, ${\rm ker}(f)\cap M_l=0$ for $n+1<l\<2n$ and ${\rm ker}(f)=\oplus_{l=2}^{n+1}{\rm ker}(f)\cap M_l=\oplus_{s=2}^{n+1}{\rm ker}(f)\cap T^s$. Clearly, ${\rm ker}(f)\cap M_l={\rm ker}(f)\cap T^l$ for all for $2\<l\<n+1$. Let $g$ be the linear endomorphism of $M$ defined by $g(x)=Ex$, $x\in M$. Clearly, $g(M_l)\subseteq M_{l+1}$ for all $2\<l\<2n$, where $M_{2n+1}=0$. An argument similar to the proof of Proposition \ref{5.4.10} shows that ${\rm ker}(g)\cap M_l=0$ for all $2\<l\<n$, ${\rm dim}({\rm ker}(g)\cap M_l)=1$ for all $n+1\<l\<2n$ and ${\rm ker}(g)=\oplus_{l=n+1}^{2n}{\rm ker}(g)\cap M_l$.

By Corollary \ref{5.3.9}, $M$ is projective. Hence the summands of $M$ are all projective. Since $t+t'=n$, any simple submodule of $M$ is isomorphic to $V_l$ for some $1\<l\<n$, which is a  simple $He_0$-module. Hence any indecomposable summand of $M$ is isomorphic to some $P_l$, where $1\<l\<n$ and $P_n=V_n$. Note that $P_l$ is both a projective cover of $V_l$ and an injective envelope of $V_l$. It follows that $M$ is an injective envelope of ${\rm soc}(M)$. By the structures of the simple modules $V_l$, any simple submodule of $M$ is generated by a nonzero element of ${\rm ker}(f)\cap T^s$ for some $s$. Moreover, for any $0\neq x\in {\rm ker}(f)\cap T^s$ and $0\neq y\in {\rm ker}(f)\cap T^{s'}$ with $2\<s\neq s'\<n+1$, if the submodules $\langle x\rangle$ and
$\langle y\rangle$ are simple, then they are not isomorphic. Thus, it follows from the fact ${\rm dim}({\rm ker}(f)\cap T^s)=1$ that any two different simple submodules of $M$ are not isomorphic. Consequently, any two different indecomposable summands of $M$ are not isomorphic.

Assume $u=0$. Then $Kx=q^{2(s-2+\frac{1-n}{2})}x$ for any $x\in T^s$. Let $N$ be a simple $H$-submodule of $M$. We claim  that $N$ is not isomorphic to $V_{2k-1}$ for any $1\<k\<\frac{n-1}{2}$. Suppose on the contrary that $N\cong V_{2k-1}$ for some $1\<k\<\frac{n-1}{2}$. Then $N$ has a standard basis $\{x_1,x_2,\cdots,x_{2k-1}\}$ such that $Kx_i=q^{2(i-k)}x_i$ for all $1\<i\<2k-1$, $Fx_1=Ex_{2k-1}=0$ and $Ex_i=x_{i+1}$ for all $1\<i\<2k-2$. Hence $x_1\in{\rm ker}f\cap T^{\frac{n+5}{2}-k}$.
 Since $1\<k\<\frac{n-1}{2}$, one has $3\<\frac{n+5}{2}-k\<\frac{n+3}{2}\<n$. Hence ${\rm ker}f\cap T^{\frac{n+5}{2}-k}={\rm ker}f\cap M_{\frac{n+5}{2}-k}$, and so $x_1\in{\rm ker}f\cap M_{\frac{n+5}{2}-k}$.
Now we have $\frac{n+5}{2}-k+2k-2=\frac{n+1}{2}+k\<\frac{n+1}{2}+\frac{n-1}{2}=n$. Then by ${\rm ker}(g)\cap M_l=\{0\}$ for all $2\<l\<n$, one sees that the restriction $g^{2k-1}|_{ M_{\frac{n+5}{2}-k}}: M_{\frac{n+5}{2}-k}\rightarrow  M_{\frac{n+3}{2}+k}$ is injective. However,
$g^{2k-1}(x_1)=E^{2k-1}x_1=0$, a contradiction! Thus, we have shown the claim. Now from the discussion in the previous paragraph, one knows that ${\rm soc}(M)$ is isomorphic to a submodule of $(\oplus_{j=1}^{\frac{n-1}{2}}V_{2j})\oplus V_n$, and so $M$ is isomorphic to a submodule of $(\oplus_{j=1}^{\frac{n-1}{2}}P_{2j})\oplus V_n$.  By comparing their dimensions, one finds $M\cong(\oplus_{j=1}^{\frac{n-1}{2}}P_{2j})\oplus V_n$. In case $u=1$, a similar argument to the above shows that $M\cong(\oplus_{j=1}^{\frac{n-1}{2}}P_{2j})\oplus V_n$.

Now let $2\<u\<\frac{n-1}{2}$. Then $n\>5$ and $Kx=q^{2(u+s-2+\frac{1-n}{2})}x$ for any $x\in T^s$. Let $N$ be a simple $H$-submodule of $M$. We claim  that $N$ is not isomorphic to  $V_{2j}$ for any $1\<j\<u-1$.  Suppose on the contrary $N\cong V_{2j}$ for some $1\<j\<u-1$. Then there is a standard basis $\{x_1,x_2,\cdots,x_{2j}\}$ in $N$ such that $Kx_i=q^{2i-2j-1}x_i$ for all $1\<i\<2j$, $Fx_1=Ex_{2j}=0$ and $Ex_i=x_{i+1}$ for all $1\<i\<2j-1$. Hence $x_1\in{\rm ker}f\cap T^{n+2-u-j}$.
 Since $2\<u\<\frac{n-1}{2}$ and $1\<j\<u-1$, it follows that $4\<n+2-u-j\<n-1$. Hence ${\rm ker}f\cap T^{n+2-u-j}={\rm ker}f\cap M_{n+2-u-j}$, and so $x_1\in{\rm ker}f\cap M_{n+2-u-j}$.
Since $n+2-u-j+2j-1\<n$ and ${\rm ker}(g)\cap M_l=\{0\}$ for all $2\<l\<n$, the restriction $g^{2j}|_{ M_{n+2-u-j}}: M_{n+2-u-j}\rightarrow  M_{n+2-u+j}$ is injective. However,
$g^{2j}(x_1)=E^{2j}x_1=0$, a contradiction! Thus, we have shown the claim. Similarly, one can show that  $N$ is not isomorphic to  $V_{2j+1}$ for any $0\<j\<\frac{n-1}{2}-u$. Then a similar argument shows that
$$\begin{array}{c}
M\cong(\oplus_{j=u}^{\frac{n-1}{2}}P_{2j})\oplus (\oplus_{j=\frac{n+1}{2}-u}^{\frac{n-1}{2}}P_{2j+1})
\cong(\oplus_{j=u}^{\frac{n-1}{2}}P_{2j})\oplus (\oplus_{j=1}^uP_{n-2u+2j}).\\
\end{array}$$
This completes the proof of Part (1). The proof of Part (2) is similar.
\end{proof}
\section{\bf The representation ring of $H$}\selabel{6}

\subsection{\bf Green ring of $\overline{U}_q$}\selabel{4.2}
By the discussion in Subsection 3.1, $\mathrm{Mod}\ol{U}_q$ is a monoidal full subcategory of $\mathrm{Mod}H_n(1,q^2)$,
and hence $r(\ol{U}_q)$ is a subring of $r(H_n(1,q^2))$.
$r(H_n(1,q^2))$ was described in \cite{Ch5} for $n=2$ and in \cite{SunHasLinCh} for $n>2$, respectively.
In this subsection, we investigate the Green ring $r(\ol{U}_q)$ of $\ol{U}_q$ in case $n$ is odd.
Throughout this subsection, assume that $n>2$ is odd and $q\in\Bbbk$ is a root of unity with $|q|=n$ or $|q|=2n$.

Note that the projective class ring $r_p(\overline{U}_q)$ is commutative since $r(\ol{U}_q)$ is.
By \cite[Corollary 3.2]{ChHasSun}, the category consisting of semisimple modules and
projective modules in $\mathrm{Mod}\ol{U}_q$ is a monoidal subcategory of $\mathrm{Mod}\ol{U}_q$.
Therefore, we have the following proposition.

\begin{lemma}\label{4.2.1}
$r_p(\overline{U}_q)$ is a free $\mathbb Z$-module with a $\mathbb Z$-basis
$$\{ [V_k], [P_l]|1\<k\<n, 1\<l\<n-1\}.$$
\end{lemma}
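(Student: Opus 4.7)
The plan is to combine the monoidal closure result \cite[Corollary 3.2]{ChHasSun}, which asserts that the full subcategory of $\mathrm{Mod}\overline{U}_q$ consisting of semisimple and projective modules is closed under tensor products, with the Krull--Schmidt theorem and the free-abelian-group structure of $r(\overline{U}_q)$. Recall that $r(\overline{U}_q)$ is a free $\mathbb{Z}$-module on the isomorphism classes of indecomposable modules (cf. Subsection 2.1), and the list $V_1, \dots, V_n$ of simples together with $P_1, \dots, P_{n-1}$ (with $P_n = V_n$ already absorbed into the simples) exhausts the indecomposable semisimple and projective modules by Subsection 3.1. Hence linear independence of $\{[V_k], [P_l]\}$ in $r_p(\overline{U}_q) \subseteq r(\overline{U}_q)$ is automatic.

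For the spanning statement, let $S$ denote the $\mathbb{Z}$-submodule of $r(\overline{U}_q)$ generated by $\{[V_k]\}_{1 \leq k \leq n} \cup \{[P_l]\}_{1 \leq l \leq n-1}$. I would first observe that $S$ contains the unit $[V_1] = [\Bbbk]$ and contains all simple and all indecomposable projective classes by construction, so $S$ contains every generator of $r_p(\overline{U}_q)$. It then suffices to show that $S$ is closed under multiplication: this is precisely where \cite[Corollary 3.2]{ChHasSun} enters. For any two basis elements $X, Y \in \{V_k\} \cup \{P_l\}$, the tensor product $X \otimes Y$ lies in the tensor subcategory of semisimple-plus-projective modules, so by Krull--Schmidt it decomposes uniquely into a direct sum of copies of the $V_k$'s and $P_l$'s. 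Therefore $[X][Y] \in S$, which establishes that $S$ is a subring of $r(\overline{U}_q)$.

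Combining the two inclusions, $S = r_p(\overline{U}_q)$, and since $\{[V_k], [P_l]\}$ is a subset of the $\mathbb{Z}$-basis of $r(\overline{U}_q)$ consisting of all indecomposables, it is $\mathbb{Z}$-linearly independent, so it forms a $\mathbb{Z}$-basis of $r_p(\overline{U}_q)$.

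I do not anticipate any genuine obstacle here: the argument is a short, essentially formal consequence of the cited monoidal closure of semisimple-plus-projective modules together with the free-abelian structure of the Green ring. The only point requiring care is to avoid listing $V_n$ twice (it is simple projective and appears only as $[V_n]$ in the basis, with $P_n := V_n$ excluded from the $[P_l]$ list), which is already reflected in the indexing $1 \leq l \leq n-1$ of the statement.
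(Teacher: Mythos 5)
Your argument is correct and is essentially the paper's own: the lemma is deduced directly from \cite[Corollary 3.2]{ChHasSun} (monoidal closure of the semisimple-plus-projective subcategory) together with the freeness of $r(\overline{U}_q)$ on indecomposables, exactly as you do. Your extra care about counting $V_n=P_n$ only once matches the paper's convention.
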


\begin{lemma}\label{4.2.2}
Let $2\< k \< n-1$. Then
$$V_2^{\otimes k}\cong\oplus_{i=0}^{[\frac{k}{2}]}\frac{k-2i+1}{k-i+1}\binom{k}{i}V_{k+1-2i}.$$
\end{lemma}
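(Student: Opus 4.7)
The plan is to prove the formula by induction on $k$, using the Clebsch--Gordan rule for $V_2 \otimes V_l$ (already stated in the excerpt's Proposition~\ref{5.4.3} derivation and obtained from \cite[Proposition 3.1]{ChHasSun}). Specifically, for $2 \le l \le n-1$ we have $V_2 \otimes V_l \cong V_{l-1} \oplus V_{l+1}$, while $V_2 \otimes V_1 \cong V_2$. Since the hypothesis $k \le n-1$ implies that in every step of the induction the largest simple summand that appears in $V_2^{\otimes k'}$ has dimension at most $k'+1 \le n$, none of the pathological ``wrap-around'' cases can occur; only the harmless boundary $V_1 \otimes V_2 = V_2$ has to be accommodated.

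I would set $a_{k,i} = \tfrac{k-2i+1}{k-i+1}\binom{k}{i}$ (with the convention $a_{k,i}=0$ for $i<0$, $i>[k/2]$, or $k+1-2i \le 0$) and note two things: (i) $a_{k,(k+1)/2}=0$ for odd $k$, so the ``formal'' summand $V_0$ that would arise from tensoring a $V_1$ piece with $V_2$ contributes nothing; (ii) the recursion
\[
a_{k+1,i} \;=\; a_{k,i} + a_{k,i-1}
\]
holds for all $i\ge 0$. This identity is a short algebraic check: writing $\binom{k+1}{i}=\binom{k}{i}+\binom{k}{i-1}$ and expanding, one finds both sides equal to $\binom{k}{i}\cdot\frac{i}{(k-i+1)(k-i+2)}$ after collecting, using $\binom{k}{i-1}=\binom{k}{i}\cdot\frac{i}{k-i+1}$.

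The base case $k=2$ is $V_2\otimes V_2\cong V_1\oplus V_3$, matching $a_{2,0}=1$, $a_{2,1}=1$. For the inductive step, assuming the formula for some $k$ with $2\le k\le n-2$, I would compute
\[
V_2^{\otimes(k+1)} \;\cong\; V_2 \otimes \bigoplus_{i=0}^{[k/2]} a_{k,i}\,V_{k+1-2i}
\;\cong\; \bigoplus_{i} a_{k,i}\bigl(V_{k+2-2i}\oplus V_{k-2i}\bigr),
\]
where the $V_{k-2i}$ in the last term is interpreted as $0$ when $k-2i=0$ (and this only occurs with coefficient $a_{k,k/2}$, where $a_{k,k/2}(V_0\oplus V_2)$ contributes exactly $a_{k,k/2}\,V_2$, consistent with $a_{k,(k+1)/2}=0$ on the other side). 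Reindexing both sums so that $V_{(k+1)+1-2j}$ appears and applying the Krull--Schmidt theorem, the coefficient of $V_{(k+1)+1-2j}$ becomes $a_{k,j}+a_{k,j-1}$, which by the identity above is exactly $a_{k+1,j}$. This gives the formula at $k+1$.

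The main obstacle is essentially bookkeeping: verifying that the ballot-number recursion $a_{k+1,i}=a_{k,i}+a_{k,i-1}$ matches the Clebsch--Gordan combinatorics in the presence of the boundary $V_1\otimes V_2\cong V_2$ (no $V_0$), and confirming that the assumption $k\le n-1$ prevents ever using $V_2\otimes V_n$, whose decomposition differs from the generic one. Both points dissolve once the ``$V_0=0$, $a_{k,(k+1)/2}=0$'' convention is in place, leaving the combinatorial identity as the only nontrivial line of computation.
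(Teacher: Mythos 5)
Your argument is correct, and it takes a genuinely different (and more self-contained) route than the paper: the paper's proof of Lemma \ref{4.2.2} is a bare citation of \cite[Theorem 3.1]{Ch2} and \cite[Lemma 5.3]{ChHasLinSun}, where the formula is extracted from Chebyshev-type identities between the classes $[V_l]$ and powers of $y=[V_2]$ (the inverse expansion is Lemma \ref{4.2.4}(1)). You instead run a direct induction on $k$ using only the fusion rules $V_2\otimes V_1\cong V_2$ and $V_2\otimes V_l\cong V_{l-1}\oplus V_{l+1}$ for $2\le l\le n-1$ (Proposition \ref{4.2.3}(1)), the Krull--Schmidt theorem, and the ballot-number recursion $a_{k+1,i}=a_{k,i}+a_{k,i-1}$. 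Your observation that $k\le n-1$ keeps every tensor factor $V_l$ in the range $l\le n-1$, so the exceptional product $V_2\otimes V_n\cong P_{n-1}$ never occurs, is exactly what makes the induction close, and the conventions $V_0=0$, $a_{k,(k+1)/2}=0$ correctly absorb the boundary case $V_2\otimes V_1\cong V_2$. What each approach buys: yours makes the lemma independent of the two external references at the cost of some bookkeeping; the paper's citation is shorter but opaque.

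One small repair is needed. The recursion is true for the indices you actually use ($0\le i\le[\tfrac{k+1}{2}]$), but your parenthetical claim that after collecting ``both sides equal $\binom{k}{i}\cdot\frac{i}{(k-i+1)(k-i+2)}$'' cannot be what you meant: both sides are the integer $a_{k+1,i}$, which is far larger (e.g.\ $k=2$, $i=1$ gives $a_{3,1}=2$, not $\tfrac13$). The clean verification is to note that
$$\frac{k-2i+1}{k-i+1}\binom{k}{i}=\binom{k}{i}-\binom{k}{i-1},$$
whence
$$a_{k,i}+a_{k,i-1}=\binom{k}{i}-\binom{k}{i-2}=\binom{k+1}{i}-\binom{k+1}{i-1}=a_{k+1,i}$$
by Pascal's rule, and the same identity gives $a_{k,(k+1)/2}=\binom{k}{(k+1)/2}-\binom{k}{(k-1)/2}=0$ for odd $k$ by the symmetry of binomial coefficients. (Note also that under your vanishing conventions the recursion fails at $i=\tfrac{k}{2}+1$ for even $k$, so ``holds for all $i\ge 0$'' is slightly overstated; but that index corresponds to the nonexistent summand $V_0$ and is never used.) With these minor fixes the proof is complete.
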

\begin{proof}
It follows from \cite[Theorem 3.1]{Ch2} and \cite[Lemma 5.3]{ChHasLinSun} .
\end{proof}

Let $y=[V_2]$ in $r(\overline{U}_q)$.

\begin{proposition}\label{4.2.3}
The following equations hold in $r_p(\overline{U}_q)$ (or $r(\overline{U}_q)$):
\begin{enumerate}
\item[(1)] $y[V_j]=[V_{j+1}]+[V_{j-1}]$ for $2\<j\<n-1$;
\item[(2)] $y[V_n]=[P_{n-1}]$;
\item[(3)] $y[P_1]=[P_2]+2[V_n]$;
\item[(4)] $y[P_j]=[P_{j+1}]+[P_{j-1}]$ for $2\<j\<n-2$;
\item[(5)] $y[P_{n-1}]=2[V_n]+[P_{n-2}]$.
\end{enumerate}
\end{proposition}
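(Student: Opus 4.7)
The five identities all express $y[M]=[V_2\otimes M]$ as direct sums in the Green ring, so it suffices to establish the corresponding $\overline{U}_q$-module isomorphisms and then apply $[\cdot]$. I would handle them in three blocks: the semisimple range (1), the boundary case (2), and the projective products (3)--(5).

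For (1), the bound $j+1\le n$ keeps us inside the semisimple regime for $V_2\otimes V_j$, so the quantum Clebsch--Gordan rule gives $V_2\otimes V_j\cong V_{j+1}\oplus V_{j-1}$. I would derive this either from Lemma~\ref{4.2.2} by induction on $j$ (rewriting $V_2^{\otimes k}$ in two ways and using Krull--Schmidt), or by exhibiting highest-weight vectors in $V_2\otimes V_j$ of weights $q^{\,j}$ and $q^{\,j-2}$ together with a dimension count.

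For (2), $V_n$ is projective and injective (Subsection~\ref{4.1}), so $V_2\otimes V_n$ is projective of dimension $2n$. The indecomposable projective $\overline{U}_q$-modules are $V_n$ (dimension $n$) and $P_1,\dots,P_{n-1}$ (each of dimension $2n$), so $V_2\otimes V_n$ is either $V_n^{\oplus 2}$ or one of the $P_l$. A weight analysis rules out a $V_n$-summand and pins down $V_{n-1}$ as the simple top, uniquely identifying $V_2\otimes V_n\cong P_{n-1}$.

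For (3)--(5), each $P_l$ with $1\le l\le n-1$ has composition factors $2V_l+2V_{n-l}$, so Lemma~\ref{L1} gives
\[ V_2\otimes P_l\cong 2(V_2\otimes V_l)\oplus 2(V_2\otimes V_{n-l}). \]
Substituting (1) and (2) on the right-hand side determines the full list of composition factors of $V_2\otimes P_l$. This tensor product is projective (since $P_l$ is), hence decomposes as a direct sum of $V_n$'s and $P_j$'s. The crucial observation is that for $1\le j\le n-1$, the composition factors of $P_j$ lie in $\{V_1,\dots,V_{n-1}\}$, so $V_n$ is a composition factor only of the indecomposable projective $V_n$ itself; the multiplicity of $V_n$ as a summand therefore equals its composition-factor multiplicity, and subtracting these copies leaves a projective module whose remaining composition factors uniquely determine a direct sum of $P_j$'s matching (3)--(5) case by case. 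The main obstacle is Step (2): the isomorphism $V_2\otimes V_n\cong P_{n-1}$ cannot be concluded on dimension grounds alone since all of $P_1,\dots,P_{n-1}$ have the same dimension $2n$, so one must pin down the top via weights modulo $n$ or by exhibiting an explicit surjection $V_2\otimes V_n\twoheadrightarrow V_{n-1}$ and invoking the universal property of the projective cover.
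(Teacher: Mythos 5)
The paper itself disposes of this proposition by citing the decomposition rules already proved in \cite{ChHasSun}, so a self-contained argument is welcome; but yours has a genuine gap, and it occurs twice, both times for the same reason: neither $K$-weights nor composition factors separate the relevant projective modules. First, in step (2), the claim that ``a weight analysis rules out a $V_n$-summand'' fails: as class functions on $\langle K\rangle$ one has $\mathrm{char}(V_n)=\mathrm{char}(V_{n-1})+\mathrm{char}(V_1)=\sum_{z^n=1}z$, so $2V_n$ and $P_{n-1}$ have \emph{identical} characters (and the same dimension), and no analysis of $K$-eigenvalues can distinguish them. You must use the $E,F$-action; the fallback you mention is the one that works, e.g.\ $\Hom(V_2\otimes V_n,V_{n-1})\cong\Hom(V_n,V_2^{\vee}\otimes V_{n-1})\cong\Hom(V_n,V_n\oplus V_{n-2})\neq 0$ (using part (1) and $V_2^{\vee}\cong V_2$, the unique $2$-dimensional simple), which produces a surjection onto $V_{n-1}$ and hence, by projectivity of $V_2\otimes V_n$ and a dimension count, the isomorphism with $P_{n-1}$. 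Second, and more seriously, in steps (3)--(5) the assertion that ``the remaining composition factors uniquely determine a direct sum of $P_j$'s'' is false, because $P_j$ and $P_{n-j}$ are non-isomorphic indecomposable projectives with the \emph{same} composition factors $2V_j+2V_{n-j}$. For instance $[V_2\otimes P_1]=2[V_2]+2[V_{n-2}]+2[V_n]$ in the Grothendieck group is consistent with both $P_2\oplus 2V_n$ and $P_{n-2}\oplus 2V_n$. The decomposition of a projective module is determined by its top, not by its composition factors, so an extra input is needed; for example $\dim\Hom(V_2\otimes P_l,V_j)=\dim\Hom(P_l,V_2^{\vee}\otimes V_j)=[V_2\otimes V_j:V_l]$, which is computable from (1) and (2) and singles out the correct summands in each of (3)--(5).

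A secondary point: Lemma \ref{L1} does not give the module isomorphism $V_2\otimes P_l\cong 2(V_2\otimes V_l)\oplus 2(V_2\otimes V_{n-l})$. In that lemma the factor whose composition series is used is tensored against a \emph{projective} module, whereas here $V_2$ is not projective; and indeed the claimed isomorphism is false, since the right-hand side has the non-projective summand $V_2\otimes V_1\cong V_2$ while the left-hand side is projective. What is true --- and all you actually need for the composition-factor count --- is the corresponding identity in the Grothendieck ring, which holds simply because tensoring over $\Bbbk$ is exact. Part (1) of your argument (Clebsch--Gordan in the semisimple range, via Lemma \ref{4.2.2} and Krull--Schmidt or via highest-weight vectors) is fine as it stands.
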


\begin{proof}
It follows from \cite[Proposition 3.1, Theorems 3.3 and 3.5]{ChHasSun}.
\end{proof}

\begin{lemma}\label{4.2.4}
	The following equations hold in $r_p(\overline{U}_q)$.
\begin{enumerate}
\item[(1)] $[V_l]=\sum_{i=0}^{[\frac{l-1}{2}]}(-1)^{i}\binom{l-1-i}{i}y^{l-1-2i}$ for  $1\<l\<n$.
\item[(2)] $[P_l]=\sum_{i=0}^{[\frac{n-l}{2}]}(-1)^i\frac{n-l}{n-l-i}\binom{n-l-i}{i}y^{n-l-2i}[V_n]$ for $1\<l\<n-1$.
\end{enumerate}
\end{lemma}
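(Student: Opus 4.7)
For Part (1), I would proceed by induction on $l$. The base cases $[V_1]=1=\binom{0}{0}y^0$ and $[V_2]=y=\binom{1}{0}y^1$ are immediate. For the inductive step, assuming the formula for $[V_{l-1}]$ and $[V_l]$, I apply the three-term recursion $[V_{l+1}]=y[V_l]-[V_{l-1}]$ obtained by rearranging Proposition \ref{4.2.3}(1). Collecting the coefficient of $y^{l-2i}$ on both sides reduces the step to Pascal's identity
\[
\binom{l-i}{i}=\binom{l-1-i}{i}+\binom{l-1-i}{i-1},
\]
which finishes the argument (with the usual minor adjustments for the endpoints of the summations).

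For Part (2), I would rewrite the claim as $[P_{n-k}]=L_k(y)[V_n]$ for $1\<k\<n-1$, where
\[
L_k(y):=\sum_{i=0}^{[k/2]}(-1)^i\frac{k}{k-i}\binom{k-i}{i}y^{k-2i}
\]
is the $k$-th Lucas polynomial in $y$. The base cases $L_1(y)=y$ and $L_2(y)=y^2-2$ are furnished by Proposition \ref{4.2.3}(2) and (5): they give $[P_{n-1}]=y[V_n]$ and $[P_{n-2}]=y[P_{n-1}]-2[V_n]=(y^2-2)[V_n]$. For $2\<k\<n-2$, Proposition \ref{4.2.3}(4) supplies the down-recursion $[P_{n-k-1}]=y[P_{n-k}]-[P_{n-k+1}]$, so the formula will follow by induction on $k$ once one verifies the polynomial recursion $L_{k+1}(y)=yL_k(y)-L_{k-1}(y)$. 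Note that Proposition \ref{4.2.3}(3) is not used in the induction; it is automatically compatible, since the ring relations of $r_p(\overline{U}_q)$ are already built in.

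The main obstacle is this Lucas recursion. Matching coefficients of $y^{k+1-2i}$ on both sides reduces it to the identity
\[
\frac{k+1}{k+1-i}\binom{k+1-i}{i}=\frac{k}{k-i}\binom{k-i}{i}+\frac{k-1}{k-i}\binom{k-i}{i-1}.
\]
Unlike Pascal's rule, the weight $\frac{k}{k-i}$ prevents a one-line combinatorial proof, but clearing denominators and cancelling common factorials reduces the claim to the polynomial equality $k(k-2i+1)+i(k-1)=(k+1)(k-i)$, which is immediate. Once this is verified, the inductive argument goes through and both parts of the lemma follow.
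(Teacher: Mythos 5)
Your proof is correct. Note that the paper itself gives no in-text argument for this lemma: it simply cites \cite[Lemma 5.6]{ChHasLinSun}, \cite[Theorem 3.1]{Ch2} and \cite[Theorem 3.3]{ChHasSun} and remarks that the statement "can be shown in a similar way to" the first of these. Your explicit induction is exactly the computation being delegated: the relations of Proposition \ref{4.2.3} are the Chebyshev recursion $[V_{l+1}]=y[V_l]-[V_{l-1}]$ (started from $[V_1]=1$, $[V_2]=y$) and the Lucas-type recursion $[P_{n-k-1}]=y[P_{n-k}]-[P_{n-k+1}]$ (started from $[P_{n-1}]=y[V_n]$ and $[P_{n-2}]=(y^2-2)[V_n]$ via parts (2) and (5)), and the stated closed forms are precisely the Fibonacci and Lucas polynomials in $y$. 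Your coefficient checks are right: Pascal's rule handles Part (1), and for Part (2) the weighted identity does reduce, after clearing the common factor $(k-i-1)!/(i!\,(k+1-2i)!)$, to $k(k-2i+1)+i(k-1)=(k+1)(k-i)$, which holds identically (the boundary values of $i$ are covered by the usual convention $\binom{a}{-1}=0$ and by $\binom{a}{b}=0$ for $b>a\geqslant 0$). Your observation that Proposition \ref{4.2.3}(3) is not needed is also correct; that relation is not a step in the induction but the source of the extra constraint recorded later as Proposition \ref{4.2.6}. So your argument is a complete, self-contained replacement for the paper's citation.
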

\begin{proof}
It follows from \cite[Lemma 5.6]{ChHasLinSun}, \cite[Theorem 3.1]{Ch2} and \cite[Theorem 3.3]{ChHasSun}.
It also can be shown in a similar way to \cite[Lemma 5.6]{ChHasLinSun}.
\end{proof}

\begin{corollary}\label{4.2.5}
$r_p(\overline{U}_q)$ is generated as a ring by $y$.
\end{corollary}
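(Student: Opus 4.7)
The plan is to observe that this corollary is a direct consequence of Lemma \ref{4.2.1} and Lemma \ref{4.2.4}. By Lemma \ref{4.2.1}, the ring $r_p(\overline{U}_q)$ is generated as a $\mathbb{Z}$-module by the set $\{[V_k], [P_l]\mid 1\<k\<n,\ 1\<l\<n-1\}$, so to show it is generated as a ring by $y=[V_2]$ it suffices to express each basis element as a polynomial in $y$ with integer coefficients.

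First I would invoke Lemma \ref{4.2.4}(1), which directly expresses each $[V_l]$ ($1\<l\<n$) as a polynomial in $y$. In particular, $[V_n]$ itself is a polynomial in $y$. Next I would substitute this expression for $[V_n]$ into the formula of Lemma \ref{4.2.4}(2), which writes $[P_l]$ as a polynomial in $y$ multiplied by $[V_n]$. Since the product of two polynomials in $y$ is again a polynomial in $y$, each $[P_l]$ is a polynomial in $y$ as well. Hence every generator of the underlying $\mathbb{Z}$-module lies in the subring $\mathbb{Z}[y]\subseteq r_p(\overline{U}_q)$, and the conclusion follows.

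Since the entire argument is an immediate assembly of previously established formulas, there is essentially no obstacle; the only point worth noting is that one should explicitly remark that $[V_n]$ is itself expressible via $y$ (so the appearance of $[V_n]$ in Lemma \ref{4.2.4}(2) does not introduce a new generator). With that remark, the proof reduces to a single sentence citing Lemmas \ref{4.2.1} and \ref{4.2.4}.
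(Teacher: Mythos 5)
Your proposal is correct and follows exactly the paper's argument: the paper proves this corollary by citing Lemmas \ref{4.2.1} and \ref{4.2.4}, and your write-up simply spells out the same reasoning, including the (correct) observation that $[V_n]$ appearing in Lemma \ref{4.2.4}(2) is itself a polynomial in $y$ by part (1).
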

\begin{proof}
It follows from Lemmas \ref{4.2.1} and \ref{4.2.4}.
\end{proof}

\begin{proposition}\label{4.2.6}
In $r(\overline{U}_q)$, we have

$$\begin{array}{c}
(\sum_{i=0}^{\frac{n-1}{2}}(-1)^i\frac{n}{n-i}\binom{n-i}{i}y^{n-2i}-2)(\sum_{i=0}^{\frac{n-1}{2}}(-1)^i\binom{n-1-i}{i}y^{n-1-2i})=0.
\end{array}$$
\end{proposition}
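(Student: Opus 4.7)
The plan is to rewrite both factors as polynomial values in $y$ using Lemma \ref{4.2.4}, and then reduce the identity to the single relation in Proposition \ref{4.2.3}(3). First, by Lemma \ref{4.2.4}(1), the second factor is just $[V_n]$, so it suffices to show $(h_n(y)-2)[V_n]=0$, where I set
$$h_m(y):=\sum_{i=0}^{[m/2]}(-1)^i\frac{m}{m-i}\binom{m-i}{i}y^{m-2i}, \qquad m\geq 1,$$
and $h_0(y):=2$. With this notation, Lemma \ref{4.2.4}(2) reads $[P_l]=h_{n-l}(y)[V_n]$ for $1\leq l\leq n-1$. Note that $h_n(y)$ is precisely the degree-$n$ polynomial in the first factor (using that $n$ is odd, so $[n/2]=(n-1)/2$).

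Next, I would establish the three-term polynomial recurrence
$$y\,h_m(y)=h_{m+1}(y)+h_{m-1}(y) \quad \text{for every } m\geq 1.$$
These $h_m$ are the dilated Chebyshev polynomials of the first kind, namely $h_m(y)=2T_m(y/2)$, so the recurrence is the classical Chebyshev identity; it can also be checked directly from the explicit coefficients by a short binomial manipulation.

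With the recurrence in hand, setting $m=n-1$ gives $h_n(y)=y\,h_{n-1}(y)-h_{n-2}(y)$. Multiplying by $[V_n]$ and inserting $[P_l]=h_{n-l}(y)[V_n]$ on the right-hand side yields
$$h_n(y)[V_n]=y\,h_{n-1}(y)[V_n]-h_{n-2}(y)[V_n]=y[P_1]-[P_2].$$
Since Proposition \ref{4.2.3}(3) gives $y[P_1]=[P_2]+2[V_n]$, we conclude $h_n(y)[V_n]=2[V_n]$, i.e., $(h_n(y)-2)[V_n]=0$, as required.

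The only step that is not pure bookkeeping is the three-term recurrence for the $h_m$, and this is the main (though minor) obstacle. In the write-up I would either invoke the classical Chebyshev relation $2T_{m+1}(x)=2x\cdot 2T_m(x)-2T_{m-1}(x)$ with $x=y/2$, or give a short combinatorial derivation from the explicit closed form. Everything else reduces to substituting the formulas of Lemma \ref{4.2.4} and applying the single projective recurrence \ref{4.2.3}(3).
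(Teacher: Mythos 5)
Your proof is correct and follows essentially the same route as the paper: both reduce the claim via Lemma \ref{4.2.4}(1) to showing $(h_n(y)-2)[V_n]=0$ and then exploit the single relation $y[P_1]=[P_2]+2[V_n]$ from Proposition \ref{4.2.3}(3) together with Lemma \ref{4.2.4}(2). The only difference is cosmetic: the paper carries out the index shift and the binomial identity $\frac{n-1}{n-1-i}\binom{n-1-i}{i}+\frac{n-2}{n-1-i}\binom{n-1-i}{i-1}=\frac{n}{n-i}\binom{n-i}{i}$ inline, whereas you package exactly the same combinatorial content as the Chebyshev three-term recurrence $y\,h_m=h_{m+1}+h_{m-1}$.
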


\begin{proof}
Since $n$ is odd, by Lemma \ref{4.2.4}(2), we have
$$\begin{array}{c}
y[P_1]=\sum_{i=0}^{\frac{n-1}{2}}(-1)^i\frac{n-1}{n-1-i}\binom{n-1-i}{i}y^{n-2i}[V_n].
\end{array}$$
By Proposition \ref{4.2.3}(3) and Lemma \ref{4.2.4}(2), we also have
$$\begin{array}{rcl}
y[P_1]&=&[P_2]+2[V_n]\\
&=&(\sum_{i=0}^{\frac{n-3}{2}}(-1)^i\frac{n-2}{n-2-i}\binom{n-2-i}{i}y^{n-2-2i}+2)[V_n]\\
&=&(\sum_{i=1}^{\frac{n-1}{2}}(-1)^{i-1}\frac{n-2}{n-1-i}\binom{n-1-i}{i-1}y^{n-2i}+2)[V_n].\\
\end{array}$$

Therefore, one obtains
$$\begin{array}{c}
(\sum_{i=0}^{\frac{n-1}{2}}(-1)^i\frac{n-1}{n-1-i}\binom{n-1-i}{i}y^{n-2i}+
\sum_{i=1}^{\frac{n-1}{2}}(-1)^i\frac{n-2}{n-1-i}\binom{n-1-i}{i-1}y^{n-2i}-2)[V_n]=0,\\
\end{array}$$
which is equivalent to $(\sum_{i=0}^{\frac{n-1}{2}}(-1)^i\frac{n}{n-i}\binom{n-i}{i}y^{n-2i}-2)[V_n]=0$.
Thus, the proposition follows from Lemma \ref{4.2.4}(1).
\end{proof}

\begin{corollary}\label{4.2.7}
$\{y^l|0\<l\<2n-2\}$ is a $\mathbb Z$-basis of $r_p(\overline{U}_q)$.
\end{corollary}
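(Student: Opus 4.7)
The plan is to combine Corollary \ref{4.2.5} with the polynomial identity of Proposition \ref{4.2.6} to obtain the spanning property, and then use Lemma \ref{4.2.4} to establish linear independence via a triangular change-of-basis.

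First I would observe that Proposition \ref{4.2.6} exhibits a \emph{monic} polynomial relation of degree $2n-1$ satisfied by $y$. Indeed, the leading term of the first factor is $\frac{n}{n}\binom{n}{0}y^n=y^n$ and the leading term of the second factor is $\binom{n-1}{0}y^{n-1}=y^{n-1}$, so their product has leading term $y^{2n-1}$. Combined with Corollary \ref{4.2.5}, which says that $r_p(\overline{U}_q)$ is generated as a ring by $y$, this monic relation allows one to rewrite $y^{2n-1}$ as a $\mathbb Z$-linear combination of $\{1,y,y^2,\ldots,y^{2n-2}\}$, and then inductively every higher power of $y$ as well. Hence $\{y^l\mid 0\<l\<2n-2\}$ generates $r_p(\overline{U}_q)$ as a $\mathbb Z$-module.

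For linear independence I would invoke Lemma \ref{4.2.4} to express the known $\mathbb Z$-basis from Lemma \ref{4.2.1} in terms of powers of $y$. By part (1), $[V_l]$ is a polynomial in $y$ of degree $l-1$ with leading coefficient $1$, for $1\<l\<n$. By part (2), $[P_l]$ equals $[V_n]$ times a polynomial in $y$ of degree $n-l$ with leading coefficient $1$, so $[P_l]$ itself is a polynomial in $y$ of degree $(n-l)+(n-1)=2n-l-1$ with leading coefficient $1$. Reordering the basis as
$$[V_1],[V_2],\ldots,[V_n],[P_{n-1}],[P_{n-2}],\ldots,[P_1],$$
the corresponding degrees in $y$ are $0,1,\ldots,n-1,n,n+1,\ldots,2n-2$, exactly one element per degree. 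The transition matrix expressing this reordered basis in terms of $\{1,y,y^2,\ldots,y^{2n-2}\}$ is therefore lower triangular with $1$'s on the diagonal, and hence invertible over $\mathbb Z$.

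Putting the two steps together, $\{y^l\mid 0\<l\<2n-2\}$ is obtained from an honest $\mathbb Z$-basis of $r_p(\overline{U}_q)$ via a $\mathbb Z$-invertible transformation, so it is itself a $\mathbb Z$-basis. No step looks genuinely delicate beyond careful bookkeeping of leading coefficients; the only point that needs a brief verification is that the relation in Proposition \ref{4.2.6} really is monic of degree $2n-1$, rather than collapsing to something of smaller degree, which is immediate from the closed form of the two factors.
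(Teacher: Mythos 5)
Your proof is correct. The first half — deducing that $\{y^l\mid 0\le l\le 2n-2\}$ spans $r_p(\overline{U}_q)$ as a $\mathbb Z$-module from the monic degree-$(2n-1)$ relation of Proposition \ref{4.2.6} together with the ring generation of Corollary \ref{4.2.5} — is exactly the paper's argument, including the observation that the product of the two factors has leading term $y^{2n-1}$. Where you diverge is in the final step: the paper simply counts, invoking Lemma \ref{4.2.1} to say that $r_p(\overline{U}_q)$ is a free $\mathbb Z$-module of rank $2n-1$, so a spanning set of $2n-1$ elements is automatically a basis. You instead verify independence directly by using Lemma \ref{4.2.4} to exhibit a unitriangular transition matrix from the known basis $[V_1],\ldots,[V_n],[P_{n-1}],\ldots,[P_1]$ (of $y$-degrees $0,1,\ldots,2n-2$, one per degree, each with leading coefficient $1$) to the powers of $y$. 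Both routes are sound; the paper's counting argument is shorter, while your explicit change of basis is self-contained enough that it actually renders the spanning step redundant — the invertibility of the unitriangular matrix over $\mathbb Z$ already gives both spanning and independence at once.
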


\begin{proof}
By Proposition \ref{4.2.6}, we have

$$\begin{array}{rcl}
y^{2n-1}&=&-\sum_{i=1}^{\frac{n-1}{2}}(-1)^i\binom{n-1-i}{i}y^{2n-1-2i}\\
&&-\sum_{i=1}^{\frac{n-1}{2}}(-1)^i\frac{n}{n-i}\binom{n-i}{i}y^{2n-1-2i}+2y^{n-1}\\
&&-(\sum_{i=1}^{\frac{n-1}{2}}(-1)^i\frac{n}{n-i}\binom{n-i}{i}y^{n-2i}-2)
(\sum_{i=1}^{\frac{n-1}{2}}(-1)^i\binom{n-1-i}{i}y^{n-1-2i}).\\
\end{array}$$
Then it follows from Corollary \ref{4.2.5} that $r_p(\ol{U}_q)$ is generated as a $\mathbb Z$-module by $\{y^l|0\<l\<2n-2\}$.
By Lemma \ref{4.2.1}, $r_p(\overline{U}_q)$ is a free $\mathbb Z$-module of rank $2n-1$,
and hence $\{y^l|0\<l\<2n-2\}$ forms a $\mathbb Z$-basis of $r_p(\overline{U}_q)$.
\end{proof}

\begin{theorem}\label{4.2.8}
Let $\mathbb Z[y]$ be the polynomial ring in variable $y$, and $I$ the ideal of $\mathbb Z[y]$ generated by
$$\begin{array}{c}
(\sum_{i=0}^{\frac{n-1}{2}}(-1)^i\frac{n}{n-i}\binom{n-i}{i}y^{n-2i}-2)
(\sum_{i=0}^{\frac{n-1}{2}}(-1)^i\binom{n-1-i}{i}y^{n-1-2i}).
\end{array}$$
Then $r_p(\overline{U}_q)$ is isomorphic to the quotient ring $\mathbb{Z}[y]/I$.
\end{theorem}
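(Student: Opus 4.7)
The plan is to exhibit a surjective ring homomorphism $\mathbb{Z}[y] \to r_p(\overline{U}_q)$ sending the variable $y$ to $[V_2]$, show that its kernel contains the ideal $I$, and then use a rank/basis comparison to conclude that the induced map $\mathbb{Z}[y]/I \to r_p(\overline{U}_q)$ is an isomorphism.

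First I would invoke Corollary \ref{4.2.5} to obtain the surjective ring map $\pi: \mathbb{Z}[y] \twoheadrightarrow r_p(\overline{U}_q)$, $y\mapsto [V_2]$. By Proposition \ref{4.2.6}, the generator of $I$ lies in $\ker(\pi)$, so $\pi$ factors through $\mathbb{Z}[y]/I$, yielding a surjective ring homomorphism $\bar{\pi}: \mathbb{Z}[y]/I \twoheadrightarrow r_p(\overline{U}_q)$.

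Next I would verify that $\mathbb{Z}[y]/I$ is a free $\mathbb{Z}$-module with basis $\{1,y,\ldots, y^{2n-2}\}$. The key observation is that the generator of $I$ is a \emph{monic} polynomial of degree $2n-1$: its first factor is monic of degree $n$ (the leading term $y^n$ coming from $i=0$ with coefficient $\tfrac{n}{n}\binom{n}{0}=1$) and its second factor is monic of degree $n-1$ (the leading term $y^{n-1}$ from $i=0$ with coefficient $\binom{n-1}{0}=1$). Hence the usual polynomial division algorithm over $\mathbb{Z}$ applies, and $\{y^l \mid 0\<l\<2n-2\}$ is a $\mathbb{Z}$-basis of $\mathbb{Z}[y]/I$.

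Finally, Corollary \ref{4.2.7} provides that $\{y^l \mid 0\<l\<2n-2\}$ is a $\mathbb{Z}$-basis of $r_p(\overline{U}_q)$. Since $\bar{\pi}$ sends the basis of $\mathbb{Z}[y]/I$ bijectively to a basis of $r_p(\overline{U}_q)$, the surjection $\bar{\pi}$ is in fact an isomorphism. No single step should present genuine difficulty here, because the preliminary results have already isolated the relation (Proposition \ref{4.2.6}) and pinned down the rank (Corollary \ref{4.2.7}); the only subtle point is confirming the monicity of the defining polynomial so that the $\mathbb{Z}$-module rank of $\mathbb{Z}[y]/I$ equals $2n-1$, which is what allows surjectivity plus basis-matching to upgrade to an isomorphism over $\mathbb{Z}$.
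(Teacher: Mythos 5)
Your proposal is correct and takes essentially the same approach as the paper: the surjection $\mathbb{Z}[y]\twoheadrightarrow r_p(\overline{U}_q)$ from Corollary~\ref{4.2.5}, the vanishing of the generator of $I$ from Proposition~\ref{4.2.6}, and the basis match with Corollary~\ref{4.2.7}. The only (harmless) difference is that you establish freeness of $\mathbb{Z}[y]/I$ on $\{\,y^l \mid 0\leqslant l\leqslant 2n-2\,\}$ directly from the monicity of the defining polynomial via the division algorithm, whereas the paper only notes that these classes generate $\mathbb{Z}[y]/I$ as a $\mathbb{Z}$-module and deduces their linear independence from that of their images $[V_2]^l$ in $r_p(\overline{U}_q)$.
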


\begin{proof}
By Corollary \ref{4.2.5}, one can define a ring epimorphism $\phi: \mathbb{Z}[y]\rightarrow r_p(\overline{U}_q)$
by $\phi(y)=[V_2]$. By Proposition \ref{4.2.6}, $\phi(I)=0$. Hence $\phi$ induces a ring epimorphism
$\ol{\phi}: \mathbb{Z}[y]/I\rightarrow r_p(\overline{U}_q)$ such that $\phi=\ol{\phi}\circ \pi$,
where $\pi: \mathbb{Z}[y]\rightarrow\mathbb{Z}[y]/I$ is the canonical projection.
Now denote $\pi(t)$ by $\overline{t}$ for any $t\in\mathbb{Z}[y]$. Then one can easily check
that $\mathbb Z[y]/I$ is generated, as a $\mathbb Z$-module, by $\{\overline{y}^l|0\<l\<2n-2\}$,
and $\overline{\phi}(\overline{y}^l)=[V_2]^l$ for $0\<l\<2n-2$. Thus, Corollary \ref{4.2.7} implies
that $\{\overline{y}^l|0\<l\<2n-2\}$ is $\mathbb Z$-basis of $\mathbb{Z}[y]/I$.
Consequently, $\overline{\phi}$ is a $\mathbb Z$-module isomorphism. Hence, it is a ring isomorphism.
\end{proof}

Now let $z_{+}=[\O V_1]$, $z_{-}=[\O^{-1}V_1]$. Let $R$ be the subring of $r(\overline{U}_q)$ generated
by $y$, $z_+$ and $z_-$, where $y=[V_2]$ is defined as before. Then $r_p(\overline{U}_q)\subseteq R$.

\begin{proposition}\label{4.2.9}
$R$ is a free $\mathbb Z$-module with a $\mathbb Z$-basis
$$\{[V_k], [P_l], [\O^{\pm s}V_l]|1\leqslant k\leqslant n,
1\leqslant l\leqslant n-1, s\>1\}.$$
\end{proposition}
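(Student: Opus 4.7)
The plan has two parts: show that the claimed elements all lie in $R$, and show they are $\mathbb Z$-linearly independent. Linear independence is immediate from Theorem \ref{5.3.10} and the Krull--Schmidt theorem, which together say that $r(\overline{U}_q)$ is a free $\mathbb Z$-module on the isomorphism classes of all indecomposable $\overline{U}_q$-modules; our proposed basis is a subset of that basis.

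For the generation claim, I first observe that $r_p(\overline{U}_q)\subseteq R$: indeed $y\in R$ by definition, and by Corollary \ref{4.2.5} this already generates $r_p(\overline{U}_q)$. Thus all the $[V_k]$ with $1\leqslant k\leqslant n$ and all the $[P_l]$ with $1\leqslant l\leqslant n-1$ lie in $R$. It then remains to show that $[\Omega^{\pm s}V_l]\in R$ for every $s\geqslant 1$ and every $1\leqslant l\leqslant n-1$, and I would handle this in two nested inductions.

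The first (induction on $s$, with $l=1$) shows $[\Omega^{\pm s}V_1]\in R$. The base case $s=1$ is the very definition of $z_\pm$. For the inductive step, I would compute the product $z_{\pm}\cdot[\Omega^{\pm s}V_1]$ in $r(\overline{U}_q)$ using the decomposition rules of \cite{ChHasSun}; the result is $[\Omega^{\pm(s+1)}V_1]$ plus a $\mathbb Z$-combination of $[P_l]$'s and of lower classes $[\Omega^{\pm t}V_{l'}]$ with $t\leqslant s$, all of which are already in $R$ by the inductive hypothesis. This isolates $[\Omega^{\pm(s+1)}V_1]$ inside $R$. The second (induction on $l$) then shows $[\Omega^{\pm s}V_l]\in R$ for $1\leqslant l\leqslant n-1$: one decomposes $[V_l]\cdot[\Omega^{\pm s}V_1]$ (equivalently, $y\cdot[\Omega^{\pm s}V_{l-1}]$) using the same source, and solves for $[\Omega^{\pm s}V_l]$.

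The main obstacle is a bookkeeping one: at every step of these inductions one must verify from the decomposition tables in \cite{ChHasSun} that \emph{no band module} $M_s(l,\eta)$ appears in the tensor products involving $y$, $z_+$, $z_-$, and the classes $[\Omega^{\pm t}V_{l'}]$ already put into $R$. This is precisely the content needed to keep the computation closed inside the list of candidate basis elements, and it is also what distinguishes our ring $R$ from $r(\overline{U}_q)$ itself. Once this closure is confirmed, combining membership with the immediate linear independence gives the proposition.
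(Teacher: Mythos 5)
Your proposal is correct and follows essentially the same route as the paper, which simply defers to the analogous argument in \cite[Proposition 3.1]{SunHasLinCh}: linear independence is automatic from the Krull--Schmidt theorem, membership of the $[\Omega^{\pm s}V_l]$ in $R$ follows from the two inductions you describe, and the absence of band modules in the relevant tensor products is exactly what makes the $\mathbb Z$-span of the candidate set closed under multiplication by $y$, $z_+$, $z_-$, hence equal to $R$. Just present that closure observation explicitly as the proof of the spanning direction (the third requirement for a basis), rather than as a side condition of the membership inductions --- your opening ``two parts'' (membership and independence) by themselves would only show the candidate set is a basis of the submodule it spans, not of $R$.
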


\begin{proof}
It is similar to \cite[Proposition 3.1]{SunHasLinCh}.
%
%
\end{proof}

%
Similarly to \cite[Lemma 3.3]{SunHasLinCh}, one can show the following lemma.

\begin{lemma}\label{4.2.10}
The following relations are satisfied in $r(\overline{U}_q)$.
\begin{enumerate}
\item[(1)] $\sum_{i=0}^{\frac{n-1}{2}}[P_{2i+1}]=[V_n]^2$.
\item[(2)] $\sum_{i=1}^{\frac{n-1}{2}}[P_{2i+1}]
=\sum_{i=1}^{\frac{n-1}{2}}(-1)^{i-1}\binom{n-i-2}{i-1}y^{n-1-2i}[V_n]$.
\item[(3)] $\sum_{i=1}^{\frac{n-1}{2}}[P_{2i}]
=\sum_{i=1}^{\frac{n-1}{2}}(-1)^{i-1}\binom{n-i-1}{i-1}y^{n-2i}[V_n]$.
\end{enumerate}
\end{lemma}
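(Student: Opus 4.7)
My plan is to prove the three identities in sequence, relying on Lemma~\ref{4.2.4} to translate everything into polynomial identities in $y$ and $[V_n]$ over the subring $r_p(\overline{U}_q)$, together with the tensor product recursions of Proposition~\ref{4.2.3}.

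For part (1), I first note that since $V_n$ is both projective and simple, the tensor product $V_n \otimes V_n$ is projective, hence a direct sum of indecomposable projectives $P_l$ (with the convention $P_n = V_n$). To pin down the multiplicities, I substitute the polynomial expansion $[V_n] = \sum_{i=0}^{(n-1)/2}(-1)^{i}\binom{n-1-i}{i}y^{n-1-2i}$ from Lemma~\ref{4.2.4}(1) into $[V_n]^2$, and evaluate the products $y^{k}[V_n]$ iteratively via $y[V_n] = [P_{n-1}]$ and $y[P_j] = [P_{j+1}] + [P_{j-1}]$ from Proposition~\ref{4.2.3}. Collecting contributions (only odd-indexed $[P_{2i+1}]$ survive, by a parity argument on the length of the Chebyshev recursion starting at $[V_n]$) yields $[V_n]^2 = \sum_{i=0}^{(n-1)/2}[P_{2i+1}]$.

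For part (2), I rewrite the left-hand side as $[V_n]^2 - [P_1]$ using (1). By Lemma~\ref{4.2.4}, both $[V_n]^2$ and $[P_1]$ are expressible as polynomials in $y$ times $[V_n]$. Comparing coefficients of $y^{n-1-2i}[V_n]$ reduces the claim to the elementary binomial identity $\binom{n-i-2}{i-1} = \tfrac{i}{n-1-i}\binom{n-1-i}{i}$, which follows immediately by writing both sides as ratios of factorials. For part (3), I apply Lemma~\ref{4.2.4}(2) to each $[P_{2i}]$ with $1 \leq i \leq (n-1)/2$ and interchange the order of summation; for each power $y^{n-2j}[V_n]$ the resulting inner sum over $i$ is of Vandermonde--Chu type and should collapse to $(-1)^{j-1}\binom{n-j-1}{j-1}$, giving the claimed formula.

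The main obstacle will be the binomial coefficient manipulations in parts (2) and especially (3); while elementary, they are easy to mishandle. A clean approach is to verify the relevant inner sum either by induction on $j$ via Pascal's identity or by a generating-function argument, isolating the coefficient in $(1 - x)^{a}(1 + x)^{b}$ of the appropriate monomial. As an alternative route, one could first establish (2) and (3) directly by the binomial-identity method (mirroring \cite[Lemma~3.3]{SunHasLinCh}) and then deduce (1) by adding $[P_1]$ back to the sum in (2) and matching the expansion of $[V_n]^2$ given by Lemma~\ref{4.2.4}(1); this bypasses the need to compute $V_n \otimes V_n$ structurally.
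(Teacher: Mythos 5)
Your proposal is correct and is essentially the route the paper intends (the paper gives no proof of its own, deferring to the analogous computation in \cite[Lemma 3.3]{SunHasLinCh}): everything reduces, via Lemma \ref{4.2.4} and Proposition \ref{4.2.3}, to elementary binomial identities. For the record, the two identities you leave as sketches do hold: $\frac{i}{n-1-i}\binom{n-1-i}{i}=\binom{n-i-2}{i-1}$ gives (2) from (1) exactly as you describe, and in (3), after setting $k=i+j$, the inner sum $\sum_{j=0}^{k-1}(-1)^j\frac{a+2j}{a+j}\binom{a+j}{j}$ with $a=n-2k$ telescopes (since $\frac{a+2j}{a+j}\binom{a+j}{j}=\binom{a+j}{j}+\binom{a+j-1}{j-1}$) to $(-1)^{k-1}\binom{n-k-1}{k-1}$ as required; your fallback of proving (2)--(3) first and recovering (1) by adding $[P_1]$ also cleanly replaces the somewhat vague ``parity argument'' for the decomposition of $[V_n]^2$.
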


%
%

Similarly to \cite[Lemma 3.4]{SunHasLinCh}, one can check the following lemma.

\begin{lemma}\label{4.2.11}
The following relations are satisfied in $r(\overline{U}_q)$.
\begin{enumerate}
\item[(1)] $z_+z_-=1+(2y+4\sum_{i=1}^{\frac{n-1}{2}}(-1)^{i-1}\binom{n-i-2}{i-1}y^{n-1-2i})[V_n]$.
\item[(2)] $z_{+}[V_n]=z_{-}[V_n]=(1+2\sum_{i=1}^{\frac{n-1}{2}}(-1)^{i-1}\binom{n-i-1}{i-1}y^{n-2i})[V_n]$.
\end{enumerate}
\end{lemma}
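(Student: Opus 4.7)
For part (2), the key point is that $V_n$ is a projective simple $\overline{U}_q$-module, so tensor products with it are controlled by composition factors. Recall from Subsection 4.1 and \cite[Section 3]{Ch4} that $\Omega V_1$ has composition factors $V_1$ (with multiplicity one) and $V_{n-1}$ (with multiplicity two), and the same holds for $\Omega^{-1}V_1$. Applying Lemma L1 (the analogue for $\overline{U}_q$-modules) with $P=V_n$, I would obtain
\[
z_\pm [V_n] \;=\; [V_1][V_n] + 2[V_{n-1}][V_n] \;=\; [V_n] + 2[V_{n-1}][V_n].
\]
Substituting the polynomial expression $[V_{n-1}]=\sum_{i=0}^{(n-3)/2}(-1)^i\binom{n-2-i}{i}y^{n-2-2i}$ from Lemma 4.2.4(1) and reindexing via $j=i+1$ yields the stated closed form.

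For part (1), the plan is to apply Schanuel's lemma to two short exact sequences of $\overline{U}_q$-modules whose common right-hand term is $\Omega^{-1}V_1$. The first is the projective-cover sequence: since $\Omega^{-1}V_1=P_1/\mathrm{soc}(P_1)$ has top $V_1$, its projective cover is $P_1$, so there is
\[
0\longrightarrow V_1 \longrightarrow P_1 \longrightarrow \Omega^{-1}V_1\longrightarrow 0,
\]
using that $\Omega^{-1}V_1$ has no projective summand so $\Omega(\Omega^{-1}V_1)=V_1$. The second is obtained by tensoring $0\to\Omega V_1\to P_1\to V_1\to 0$ with $\Omega^{-1}V_1$, whose middle term $P_1\otimes\Omega^{-1}V_1$ is projective because $P_1$ is. Schanuel's lemma then gives
\[
\Omega V_1\otimes\Omega^{-1}V_1 \;\oplus\; P_1 \;\cong\; V_1 \;\oplus\; (P_1\otimes\Omega^{-1}V_1).
\]
Passing to $r(\overline{U}_q)$ and applying Lemma L1 once more (now with $P=P_1$) to evaluate $[P_1\otimes\Omega^{-1}V_1]=[P_1]+2[P_1][V_{n-1}]$, I get $z_+z_- = 1 + 2[P_1][V_{n-1}]$.

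It then remains to match $2[P_1][V_{n-1}]$ with the stated expression. I would expand $[P_1]$ using Lemma 4.2.4(2) and $[V_{n-1}]$ using Lemma 4.2.4(1), and reduce the resulting powers $y^k[V_n]$ via the Chebyshev-type relations of Proposition 4.2.3; Lemma 4.2.10(2) then identifies the outcome as $2[P_{n-1}] + 4\sum_{i=1}^{(n-1)/2}[P_{2i+1}]$, which is precisely $(2y+4\sum_{i=1}^{(n-1)/2}(-1)^{i-1}\binom{n-i-2}{i-1}y^{n-1-2i})[V_n]$. The main obstacle is this final binomial/polynomial identity: the Schanuel step is clean, but turning the intrinsic product $2[P_1][V_{n-1}]$ into the explicit closed form requires careful bookkeeping with the recursion from Proposition 4.2.3 and the auxiliary identities of Lemma 4.2.10.
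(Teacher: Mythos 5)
Your argument is correct, and it is genuinely more self-contained than what the paper does: the paper gives no proof at all here, deferring to the analogous computation in \cite[Lemma 3.4]{SunHasLinCh}, which ultimately rests on the explicit decomposition rules for tensor products of string modules established in \cite{ChHasSun}. Your part (2) is essentially the standard argument (composition factors of $\Omega^{\pm1}V_1$ are $V_1$ once and $V_{n-1}$ twice, then Lemma \ref{L1} with $P=V_n$, then Lemma \ref{4.2.4}(1) for $[V_{n-1}]$), and it checks out. For part (1), your Schanuel route is the real departure: instead of quoting the decomposition of $\Omega V_1\otimes\Omega^{-1}V_1$ from the literature, you derive $z_+z_-=1+2[P_1][V_{n-1}]$ from the two presentations of $\Omega^{-1}V_1$ by the projective $P_1$ and by $P_1\otimes\Omega^{-1}V_1$ (the latter projective by \cite[Proposition 4.2.12]{EGNO}), which buys independence from \cite{ChHasSun} at the cost of the closing polynomial identity. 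That identity does hold: an easy induction with the relations of Proposition \ref{4.2.3} gives $[V_{2k}][P_1]=[P_{2k}]+2\sum_{i=1}^{k}[P_{n+2-2i}]$ (with $P_n:=V_n$), so at $2k=n-1$ one gets $[V_{n-1}][P_1]=[P_{n-1}]+2\sum_{i=1}^{(n-1)/2}[P_{2i+1}]$, and then $[P_{n-1}]=y[V_n]$ together with Lemma \ref{4.2.10}(2) produces exactly the stated closed form; I verified this (e.g.\ for $n=3$ both sides equal $1+2[P_2]+4[V_3]$). The only points worth making explicit in a write-up are that $I(V_1)=P_1$ (self-injectivity) is what gives the sequence $0\to V_1\to P_1\to\Omega^{-1}V_1\to0$, and that commutativity of $r(\overline{U}_q)$ lets you ignore the order of the tensor factors when applying Lemma \ref{L1}.
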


Similarly to \cite[Proposition 3.5]{SunHasLinCh}, one can show the following proposition.

\begin{proposition}\label{4.2.12}
The following set is also a $\mathbb Z$-basis of $R$:
$$\{y^j, y^lz_+^k, y^lz_-^k| 0\<j\<2n-2, 0\<l\<n-2, k\>1\}.$$
\end{proposition}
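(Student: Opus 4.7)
The plan is to mimic the strategy used for \cite[Proposition 3.5]{SunHasLinCh}: set up a triangular change of basis between the generating set of Proposition \ref{4.2.9} and the proposed set, and then read off the conclusion from linear algebra over $\mathbb{Z}$. First, by Corollary \ref{4.2.7} the monomials $\{y^j : 0 \leq j \leq 2n-2\}$ already form a $\mathbb{Z}$-basis of the subring $r_p(\overline{U}_q) \subseteq R$, and this subring contains all classes $[V_k]$ and $[P_l]$. So the $\{y^j\}$-portion of the proposed set corresponds bijectively to the $\{[V_k], [P_l]\}$-portion of the Proposition \ref{4.2.9} basis, contributing $2n-1$ elements on each side.

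Next I would show, by induction on $s\geq 1$, that every $[\Omega^{\pm s} V_l]$ with $1 \leq l \leq n-1$ lies in the $\mathbb{Z}$-span of the proposed set. For $s=1$, by definition $z_+ = [\Omega V_1]$ and $z_- = [\Omega^{-1} V_1]$, and the products $y^a z_+$ for $0\leq a\leq n-2$ decompose, via the tensor decomposition rules of \cite{ChHasSun}, into $\mathbb{Z}$-combinations of the $[\Omega V_l]$'s plus projective classes. Inverting this triangular system expresses each $[\Omega V_l]$ in terms of $\{y^a z_+ : 0 \leq a \leq n-2\}$ modulo $r_p(\overline{U}_q)$. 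The inductive step follows by writing $[\Omega^{\pm s} V_l] = z_\pm \cdot [\Omega^{\pm(s-1)} V_l]$ up to a projective correction that is already in the $\{y^j\}$-span.

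To keep the resulting expressions inside the proposed set, I invoke three reductions: (i) $y^{2n-1}$ reduces to lower powers of $y$ by Proposition \ref{4.2.6}; (ii) $z_+ z_-$ already lies in $r_p(\overline{U}_q)$, being a polynomial in $y$ of degree at most $2n-2$, because Lemma \ref{4.2.11}(1) combined with $[V_n]$ being a polynomial in $y$ via Lemma \ref{4.2.4}(1) produces the required identity, so that mixed products $z_+^a z_-^b$ collapse to $z_+^{a-b}$ or $z_-^{b-a}$ times an element of the $\{y^j\}$-span; (iii) $y^{n-1} z_\pm^k$ reduces, since $y^{n-1} = [V_n] + (\text{lower powers of } y)$ and $[V_n] z_\pm^k$ is itself a polynomial in $y$ by iterating Lemma \ref{4.2.11}(2). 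Together, these closure relations force the $\mathbb{Z}$-span of the proposed set to be all of $R$.

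For linear independence I would compare the grading on both sides. Filtering by the maximal power of $z_+$ or $z_-$ that appears, the proposed set contributes exactly $2n-1$ elements in the $r_p$-part and $n-1$ elements in each $z_+^k$-strand and each $z_-^k$-strand for $k \geq 1$, matching the Proposition \ref{4.2.9} basis grade by grade. Since the proposed set $\mathbb{Z}$-generates a free module of the same rank as the Proposition \ref{4.2.9} basis in each grade, it is itself a $\mathbb{Z}$-basis. The main obstacle will be verifying that the change-of-basis matrix is upper-triangular with unimodular diagonal in a well-chosen total order respecting both the $z_\pm$-level and the $y$-degree; all the required ingredients are already present in the paper, so once triangularity is confirmed the conclusion is immediate.
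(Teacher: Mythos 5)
Your proposal is correct and follows essentially the same route as the paper, which simply defers to the analogous argument of \cite[Proposition 3.5]{SunHasLinCh}: a block-unitriangular change of basis between the set of Proposition \ref{4.2.9} and the proposed monomials, with spanning secured by the reductions coming from Proposition \ref{4.2.6} and Lemmas \ref{4.2.4} and \ref{4.2.11}. The details you supply (the syzygy--tensor relation giving $y^a z_\pm = [\Omega^{\pm}V_{a+1}]+\text{lower terms}+\text{projectives}$, and the collapse of $[V_n]z_\pm^k$ and $z_+z_-$ into $r_p(\overline{U}_q)$) are exactly the ingredients the cited proof uses.
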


Now let $w_{k,\eta}=[M_k(1,\eta)]$ in $r(\overline{U}_q)$ for any $k\>1$ and $\eta\in\ol{\Bbbk}$.
Then similarly to \cite[Lemma 3.6]{SunHasLinCh}, one can check the following lemma.

\begin{lemma}\label{4.2.13}
Let $k, s\>1$ and $\eta, \a\in\ol{\Bbbk}$. Then we have the following relations in $r(\overline{U}_q)$.
\begin{enumerate}
\item[(1)] $w_{k,\eta}[V_n]=k(1+\sum_{i=1}^{\frac{n-1}{2}}(-1)^{i-1}\binom{n-1-i}{i-1}y^{n-2i})[V_n]$.
\item[(2)] $z_+w_{k,\eta}=(\sum_{i=1}^{\frac{n-1}{2}}(-1)^{i-1}\binom{n-1-i}{i-1}y^{n-2i})w_{k,\eta}+k[V_n]^2$.
\item[(3)] $z_-w_{k,\eta}=(\sum_{i=1}^{\frac{n-1}{2}}(-1)^{i-1}\binom{n-i-1}{i-1}y^{n-2i})w_{k,\eta}$\\
\mbox{\hspace{1.5cm}}$+k(y+\sum_{i=1}^{\frac{n-1}{2}}(-1)^{i-1}\binom{n-i-2}{i-1}y^{n-1-2i})[V_n]$.
\item[(4)] If $\eta\neq\a$, then $w_{k,\eta}w_{s,\a}=ks[V_n]^2$.
\item[(5)] If $k\<s$, then
$$\begin{array}{c}
w_{k,\eta}w_{s,\eta}=w_{k,\eta}(1+\sum_{i=1}^{\frac{n-1}{2}}(-1)^{i-1}\binom{n-i-1}{i-1}y^{n-2i})
+(s-1)k[V_n]^2.\\
\end{array}$$
\end{enumerate}
\end{lemma}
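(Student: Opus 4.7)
The plan is to parallel the argument of \cite[Lemma 3.6]{SunHasLinCh}: compute each tensor product on the left-hand side as a direct sum of indecomposable $\ol{U}_q$-modules using the decomposition rules established in \cite{ChHasSun}, and then translate the resulting classes back into polynomial expressions in $y$, $[V_n]$, $z_\pm$ and $w_{k,\eta}$ by invoking Lemmas \ref{4.2.4} and \ref{4.2.10}. The basis of Proposition \ref{4.2.12} (together with $w_{k,\eta}$) guarantees that every expression appearing on the right-hand side has a unique realization, so verification reduces to matching Jordan--H\"older multiplicities of projective summands.

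For part (1), I would use that $V_n$ is projective and simple. The band module $M_k(1,\eta)$ has exactly two distinct simple composition factors $V_1$ and $V_{n-1}$, each with multiplicity $k$ (see the structure recalled in Section~3 of \cite{Ch4}). Lemma \ref{L1} then gives
$$M_k(1,\eta)\otimes V_n\;\cong\; k\,V_1\otimes V_n\ \oplus\ k\,V_{n-1}\otimes V_n\;\cong\; k[V_n]+k\sum_{i=1}^{\frac{n-1}{2}}[P_{2i}],$$
where the decomposition $V_{n-1}\otimes V_n$ is taken from \cite{ChHasSun}. Substituting Lemma \ref{4.2.10}(3) produces the polynomial identity claimed in (1). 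For parts (2) and (3), I would invoke the tensor product decompositions of $\Omega^{\pm 1}V_1\otimes M_k(1,\eta)$ proved in \cite{ChHasSun}; these output a copy of the band module $M_k(1,\eta)$ together with several projective indecomposables. Replacing each projective summand by its polynomial expression via Lemma \ref{4.2.4}(2) (or equivalently the sums recorded in Lemma \ref{4.2.10}) then collects the $y$-polynomial coefficient of $w_{k,\eta}$ on one side and the $[V_n]^2$-term on the other.

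For (4) and (5), I would again appeal to the decomposition rules of \cite{ChHasSun} for $M_k(1,\eta)\otimes M_s(1,\alpha)$: when $\eta\ne\alpha$ the product is a pure direct sum of projective indecomposables (specifically $ks$ copies of the sum $\sum_{i=0}^{(n-1)/2}[P_{2i+1}]$, where $P_n=V_n$), which by Lemma \ref{4.2.10}(1) is exactly $ks[V_n]^2$, yielding (4). When $\eta=\alpha$ and $k\<s$, the same source gives one extra nonprojective band summand isomorphic to $M_k(1,\eta)$ tensored with the trivial-character shift of $V_{n-1}\otimes V_n$, plus $(s-1)k$ copies of $[V_n]^2$-type projectives; combining with Lemma \ref{4.2.10}(3) yields (5).

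The main obstacle is bookkeeping: the decomposition rules of \cite{ChHasSun} for $\Omega^{\pm 1}V_1\otimes M_k(1,\eta)$ and $M_k(1,\eta)\otimes M_s(1,\alpha)$ are stated as explicit multiplicities of $P_l$ and $V_n$, and these must be matched carefully against the coefficients $\binom{n-1-i}{i-1}$ and $\binom{n-i-2}{i-1}$ appearing in the statement. This is purely a linear-algebra computation in the free $\mathbb Z$-module with basis from Proposition \ref{4.2.12}; no conceptual difficulty is expected beyond checking that the two presentations of the same projective part agree.
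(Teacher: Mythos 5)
Your proposal is correct and follows essentially the same route as the paper, which itself only states that the lemma "can be checked similarly to [SunHasLinCh, Lemma 3.6]": decompose each tensor product via the rules of \cite{ChHasSun} (using Lemma \ref{L1} and the composition factors $kV_1$, $kV_{n-1}$ of $M_k(1,\eta)$ where one factor is projective), then rewrite the projective summands as polynomials in $y$ times $[V_n]$ via Lemmas \ref{4.2.4} and \ref{4.2.10}. Your identifications $V_1\otimes V_n\cong V_n$, $V_{n-1}\otimes V_n\cong\oplus_{i=1}^{(n-1)/2}P_{2i}$ and $\sum_{i=0}^{(n-1)/2}[P_{2i+1}]=[V_n]^2$ are exactly the conversions needed, so the remaining work is the bookkeeping you describe.
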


Similarly to \cite[Proposition 3.7]{SunHasLinCh}, one can prove the following proposition.

\begin{proposition}\label{4.2.14}
$r(\overline{U}_q)$ is generated, as a ring, by $$\{y, z_+, z_-, w_{k,\eta}|k\>1, \eta\in\ol{\Bbbk}\}.$$
\end{proposition}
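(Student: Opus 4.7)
The plan is to start from Proposition~\ref{4.2.9}, which says that the subring $R$ generated by $y$, $z_+$, $z_-$ contains the classes of all simple modules $[V_k]$, all non-simple indecomposable projectives $[P_l]$, and all string modules $[\Omega^{\pm s}V_l]$ as $\mathbb Z$-basis elements. Combined with the complete classification of indecomposable $\overline{U}_q$-modules recalled in Subsection~\ref{4.1}, the only indecomposable classes not already in $R$ are the band module classes $[M_s(l,\eta)]$ with $1\<l\<n-1$, $s\>1$ and $\eta\in\ol{\Bbbk}$. So I need to show that each $[M_s(l,\eta)]$ lies in the subring generated by $R$ together with $\{w_{k,\eta}\mid k\>1,\ \eta\in\ol{\Bbbk}\}$.

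I would fix $\eta\in\ol{\Bbbk}$ and $s\>1$ and prove by induction on $l$ that $[M_s(l,\eta)]$ belongs to the subring in question. The base case $l=1$ is immediate since $w_{s,\eta}=[M_s(1,\eta)]$ by definition. For the inductive step, the key input is the tensor product decomposition of $V_2\otimes M_s(l,\eta)$ (equivalently, $y\cdot [M_s(l,\eta)]$) from \cite[Section 3]{ChHasSun}: this decomposition should produce, up to Krull--Schmidt, exactly one summand isomorphic to $M_s(l+1,\eta)$ together with one summand isomorphic to $M_s(l-1,\eta)$ (interpreting $M_s(0,\eta):=0$) plus a correction term consisting of copies of projective modules $P_j$ and possibly some string modules $\Omega^{\pm t}V_j$. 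Since $y\in R$ and since by Proposition~\ref{4.2.9} both the correction term and $[M_s(l-1,\eta)]$ (the latter by the inductive hypothesis) already lie in the subring generated by $R$ and the $w_{k,\eta}$, we can solve for $[M_s(l+1,\eta)]$ in that subring. This completes the induction and shows that every basis element of $r(\overline{U}_q)$ is a polynomial expression in $\{y,z_+,z_-,w_{k,\eta}\}$, proving the proposition.

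The main obstacle is verifying the precise form of the decomposition of $V_2\otimes M_s(l,\eta)$: one needs the coefficient of $M_s(l+1,\eta)$ in the Krull--Schmidt decomposition to be exactly $1$ (or at least a nonzero integer, so that we can solve over $\mathbb Z$), and one needs the remaining summands to be of types already controlled by $R$ or by the inductive hypothesis. This is a somewhat delicate computation at the boundary cases $l=1$ and $l=n-1$, where the band module sequence must close off correctly (cf.\ the analogous issue in \cite[Proposition~3.7]{SunHasLinCh}). Once that decomposition is in hand, the rest is purely a bookkeeping argument using Proposition~\ref{4.2.9} and Lemma~\ref{4.2.13}, and indeed the proof can be modeled on the proof of \cite[Proposition~3.7]{SunHasLinCh}, which handles the analogous assertion for $r(H_n(1,q^2))$.
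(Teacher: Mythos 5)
Your proposal is correct and follows essentially the route the paper intends: the paper gives no details and simply defers to \cite[Proposition~3.7]{SunHasLinCh}, whose argument is exactly your induction on $l$ via the decomposition of $V_2\otimes M_s(l,\eta)$ (which by \cite{ChHasSun} contributes $M_s(l+1,\eta)$ with multiplicity one, plus $M_s(l-1,\eta)$ and summands already controlled by Proposition~\ref{4.2.9}). Your identification of the boundary cases $l=1,\,n-1$ as the only delicate point is also consistent with that reference.
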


Similarly to \cite[Proposition 3.8]{SunHasLinCh}, one can show the following proposition.

\begin{proposition}\label{4.2.15}
The following set forms a $\mathbb Z$-basis of $r(\overline{U}_q)$:
$$\left\{y^j, y^lz_+^k, y^lz_-^k, y^lw_{k,\eta}
\left|0\<j\<2n-2,
0\<l\<n-2, k\>1, \eta\in\ol{\Bbbk}
\right\}\right..$$
\end{proposition}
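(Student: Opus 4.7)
The plan is to model this on \cite[Proposition 3.8]{SunHasLinCh}. Denote the proposed set by $B$ and split it as $B = B_0 \cup B_1$, with $B_0 = \{y^j,\ y^l z_+^k,\ y^l z_-^k\}$ and $B_1 = \{y^l w_{k,\eta}\}$. By Proposition \ref{4.2.12}, $B_0$ is already a $\mathbb{Z}$-basis of the subring $R$. Since $r(\overline{U}_q)$ is free as a $\mathbb{Z}$-module on the isomorphism classes of all indecomposables, and these split into the $R$-part (simples, non-simple projectives, syzygies of simples) together with the band classes $\{[M_s(l,\eta)] : s \ge 1,\ 1 \le l \le n-1,\ \eta \in \overline{\Bbbk}\}$, it suffices to show that the images of $B_1$ in $r(\overline{U}_q)/R$ form a $\mathbb{Z}$-basis of the free $\mathbb{Z}$-module freely generated by the band classes.

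The key technical ingredient will be the decomposition of $V_j \otimes M_k(1,\eta)$ modulo projectives for $1 \le j \le n-1$, extracted from \cite{Ch4} together with the tensor rules already established in Section~4; the expected form is
$$V_j \otimes M_k(1,\eta) \;\equiv\; [M_k(j,\eta)] \pmod{R}.$$
Combined with the Clebsch--Gordan-type identity $V_2^{\otimes l}\cong\bigoplus_{i=0}^{[l/2]}\frac{l-2i+1}{l-i+1}\binom{l}{i}V_{l+1-2i}$ of Lemma \ref{4.2.2}, this yields, for each $0 \le l \le n-2$,
$$y^l w_{k,\eta}\;\equiv\;[M_k(l+1,\eta)]+\sum_{i=1}^{[l/2]}\frac{l-2i+1}{l-i+1}\binom{l}{i}[M_k(l+1-2i,\eta)]\pmod{R}.$$
For each fixed $(k,\eta)$, the matrix expressing $\{y^l w_{k,\eta}\}_{l=0}^{n-2}$ in the basis $\{[M_k(l',\eta)]\}_{l'=1}^{n-1}$ modulo $R$ is thus unipotent lower-triangular over $\mathbb{Z}$, hence invertible over $\mathbb{Z}$. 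Since distinct pairs $(k,\eta)$ involve disjoint band classes, the union $B_1$ is in unipotent-triangular correspondence with the canonical $\mathbb{Z}$-basis of $r(\overline{U}_q)/R$, so $B_1$ is a $\mathbb{Z}$-basis there. Combined with Proposition \ref{4.2.12}, $B = B_0\cup B_1$ is a $\mathbb{Z}$-basis of $r(\overline{U}_q)$.

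The main obstacle I anticipate is verifying the congruence $V_j\otimes M_k(1,\eta)\equiv[M_k(j,\eta)]\pmod{R}$ with the unit coefficient, as this requires the explicit decomposition of tensor products of simple $\overline{U}_q$-modules with band modules together with careful bookkeeping of projective correction terms (which absorb cleanly into $R$ but must be shown not to contaminate the $[M_k(j,\eta)]$ component). Once this triangularity is in place, both spanning of and linear independence over $\mathbb{Z}$ for $B$ follow automatically from Proposition \ref{4.2.12} and the unipotent change of basis on the band part.
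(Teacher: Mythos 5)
Your argument is correct and is essentially the route the paper takes: its proof is a direct appeal to \cite[Proposition 3.8]{SunHasLinCh}, whose argument is precisely your unipotent-triangular change of basis between $\{y^l w_{k,\eta}\}_{l=0}^{n-2}$ and the band classes $\{[M_k(l',\eta)]\}_{l'=1}^{n-1}$ modulo the free $\mathbb Z$-submodule $R$ of Proposition \ref{4.2.12}. The congruence $V_j\otimes M_k(1,\eta)\equiv M_k(j,\eta)\pmod{R}$ that you flag as the remaining obstacle is exactly what the decomposition rules of \cite{ChHasSun} (invoked in \seref{5.4} for all tensor products of indecomposable $He_0$-modules) provide, so no genuine gap remains.
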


Let $\mathbb{Z}[y,z_+,z_-]$ be the polynomial ring in variables $ y, z_+, z_-$.
Define the polynomials $f_1(y)$, $f_2(y)$, $f_3(y)$ and $f_4(y)$ in $\mathbb{Z}[y]\subset\mathbb{Z}[y,z_+,z_-]$ by
$$\begin{array}{l}
f_1(y)=\sum_{i=0}^{\frac{n-1}{2}}(-1)^i\binom{n-1-i}{i}y^{n-1-2i},\\
f_2(y)=\sum_{i=0}^{\frac{n-1}{2}}(-1)^i\frac{n}{n-i}\binom{n-i}{i}y^{n-2i}-2,\\
f_3(y)=\sum_{i=1}^{\frac{n-1}{2}}(-1)^{i-1}\binom{n-i-2}{i-1}y^{n-1-2i},\\
f_4(y)=\sum_{i=1}^{\frac{n-1}{2}}(-1)^{i-1}\binom{n-i-1}{i-1}y^{n-2i}.\\
\end{array}$$
Let $I$ be the ideal of $\mathbb{Z}[y,z_+,z_-]$ generated by the following elements:
$$\begin{array}{c}
  f_1(y)f_2(y),\  z_+z_--1-f_1(y)(2y+4f_3(y)),\\
 f_1(y)(z_+-1-2f_4(y)),\ f_1(y)(z_+-z_-).\\
 \end{array}$$
Similar to \cite[Proposition 3.9]{SunHasLinCh}, one can show the following proposition.

\begin{proposition}\label{4.2.16}
$R$ is isomorphic to the quotient ring $\mathbb{Z}[y,z_+,z_-]/I$.
\end{proposition}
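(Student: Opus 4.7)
The plan is to build the isomorphism via the universal property of the polynomial ring. Define a ring homomorphism
$$\varphi : \mathbb{Z}[y,z_+,z_-] \longrightarrow R, \quad y \mapsto [V_2],\ z_+\mapsto [\Omega V_1],\ z_-\mapsto [\Omega^{-1}V_1].$$
By the very definition of $R$, the map $\varphi$ is surjective. To check that $\varphi$ kills the four generators of $I$: first, $f_1(y)f_2(y)$ vanishes in $R$ by Proposition \ref{4.2.6} together with Lemma \ref{4.2.4}(1), which identifies $f_1(y)$ with $[V_n]$; next, the relation $z_+z_- = 1 + f_1(y)(2y+4f_3(y))$ is precisely Lemma \ref{4.2.11}(1); finally, both $f_1(y)(z_+-1-2f_4(y))$ and $f_1(y)(z_+-z_-)$ lie in $\ker\varphi$ by Lemma \ref{4.2.11}(2). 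Hence $\varphi$ descends to a surjective ring map $\overline{\varphi}:\mathbb{Z}[y,z_+,z_-]/I \to R$.

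To prove injectivity, I would show that the quotient is $\mathbb{Z}$-spanned by the set
$$\mathcal{B}=\{\overline{y}^j,\ \overline{y}^l\,\overline{z}_+^k,\ \overline{y}^l\,\overline{z}_-^k \mid 0\leq j\leq 2n-2,\ 0\leq l\leq n-2,\ k\geq 1\},$$
where $\overline{f}$ denotes the class of $f$ modulo $I$. Once this is established, Proposition \ref{4.2.12} guarantees that $\overline{\varphi}$ sends $\mathcal{B}$ onto a $\mathbb{Z}$-basis of $R$, forcing $\overline{\varphi}$ to be a $\mathbb{Z}$-module isomorphism and hence a ring isomorphism.

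The reduction of an arbitrary monomial $y^az_+^bz_-^c$ modulo $I$ proceeds by three rewriting rules. Whenever both $b\geq 1$ and $c\geq 1$, apply $\overline{z_+z_-} = 1 + \overline{f_1(y)}(2y+4f_3(y))$ to remove one mixed pair, yielding a shorter mixed monomial plus an $f_1(y)$-multiple. Combining $f_1(y)(z_+-z_-)\in I$ with $f_1(y)(z_+-1-2f_4(y))\in I$ gives $\overline{f_1(y)}\,\overline{z}_\pm^k = \overline{f_1(y)(1+2f_4(y))^k}$, a pure polynomial in $y$, so every $f_1(y)$-multiple of a power of $z_\pm$ collapses to a $y$-polynomial. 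Iterating, each monomial reduces to a $\mathbb{Z}$-combination of elements of the form $\overline{y}^a$, $\overline{y}^a\overline{z}_+^k$, or $\overline{y}^a\overline{z}_-^k$ with $k\geq 1$. The pure-$y$ terms are then trimmed using the fact that $\overline{f_1(y)f_2(y)}=0$ supplies a monic relation of degree $2n-1$ in $y$, bounding $a\leq 2n-2$. For the mixed terms with $a\geq n-1$, write $\overline{y}^{n-1} = \overline{f_1(y)} - (\text{lower-degree polynomial in }y)$ and fold the $\overline{f_1(y)}\,\overline{z}_\pm^k$ piece back into a $y$-polynomial via the previous identity.

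The main obstacle is the bookkeeping: each reduction of the $y$-degree in a mixed term produces new $f_1(y)$-multiples that must be converted back to pure $y$-polynomials, and one has to verify that the procedure actually terminates with a representative supported on $\mathcal{B}$. I would formalise this by equipping monomials with the lexicographic ordering (total $z$-degree, then number of mixed $z_+z_-$ factors, then $y$-degree) and checking that each rewriting rule strictly decreases the ordering. This is exactly the strategy carried out in \cite[Proposition~3.9]{SunHasLinCh}, and the proof transfers verbatim once the polynomials $f_1,f_2,f_3,f_4$ are in place; as the statement notes, one need only indicate this parallel.
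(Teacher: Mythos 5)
Your proposal is correct and follows exactly the strategy the paper intends: the paper gives no details here, deferring to the analogous argument in \cite[Proposition 3.9]{SunHasLinCh}, which is precisely the surjection-plus-spanning-set argument you describe (and which the paper carries out explicitly in the proofs of Theorem \ref{4.2.8} and Theorem \ref{5.5.10}). Your verification that the four generators of $I$ die under $\varphi$ via Proposition \ref{4.2.6} and Lemma \ref{4.2.11}, and your reduction of $\mathbb{Z}[y,z_+,z_-]/I$ onto the spanning set matching the basis of Proposition \ref{4.2.12}, are both sound.
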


Let $X=\{ y, z_+, z_-, w_{k,\eta}|k\>1, \eta\in\ol{\Bbbk}\}$, and let $\mathbb{Z}[X]$
be the corresponding polynomial ring.
Then $\mathbb{Z}[y,z_+,z_-]$ is a subring of $\mathbb{Z}[X]$. Let $U$ be the following subset of $\mathbb{Z}[X]$:
$$U=\left\{\left.\begin{array}{l}
  f_1(y)f_2(y),\\
z_+z_--1-f_1(y)(2y+4f_3(y)),\\
f_1(y)(z_+-1-2f_4(y)),\ f_1(y)(z_+-z_-),\\
f_1(y)(w_{k,\eta}-k-kf_4(y)),\\
(z_+-f_4(y))w_{k,\eta}-kf_1(y)^2,\\
(z_--f_4(y))w_{k,\eta}-kf_1(y)(y+f_3(y)),\\
w_{k,\eta}w_{s,\a}-ksf_1(y)^2,\\
w_{k,\eta}(w_{t,\eta}-1-f_4(y))-(t-1)kf_1(y)^2\\
 \end{array}
\right|\begin{array}{l}
k, s, t\>1\\
\mbox{with } k\<t,\\
\eta, \a\in\ol{\Bbbk}\\
\mbox{with } \eta\neq\a\\
\end{array}\right\},$$
where $f_1(y)$, $f_2(y)$, $f_3(y)$, $f_4(y)\in\mathbb{Z}[y]\subset\mathbb{Z}[y,z_+,z_-]\subset\mathbb{Z}[X]$
are given as before.
Let $I_0=(U)$, the ideal of $\mathbb{Z}[X]$ generated by $U$.
Now similar to \cite[Theorem 3.10]{SunHasLinCh}, one can show the following theorem.

\begin{theorem}\label{4.2.17}
When $|q|=n$ or $|q|=2n$ for some odd $n>2$, the Green ring $r(\overline{U}_q)$ is isomorphic to $\mathbb{Z}[X]/I_0$.
\end{theorem}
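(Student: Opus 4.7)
The plan is to mimic the strategy that established Theorem 4.2.8 and Proposition 4.2.16, using the basis given in Proposition 4.2.15 as the target. I would first define the ring homomorphism
\[
\phi:\mathbb{Z}[X]\longrightarrow r(\overline{U}_q),\qquad y\mapsto[V_2],\ z_+\mapsto[\Omega V_1],\ z_-\mapsto[\Omega^{-1}V_1],\ w_{k,\eta}\mapsto[M_k(1,\eta)],
\]
and invoke Proposition 4.2.14 to conclude $\phi$ is surjective.

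Next I would verify that every generator listed in the set $U$ lies in $\ker\phi$, so that $\phi$ descends to a surjection $\overline{\phi}:\mathbb{Z}[X]/I_0\twoheadrightarrow r(\overline{U}_q)$. The first four families of relations ($f_1f_2$, the $z_+z_-$ relation, and the two relations involving $f_1(y)(z_\pm-\cdots)$) are exactly the generators of the ideal $I$ appearing in Proposition 4.2.16, so they lie in $\ker\phi$ by that proposition (equivalently: Proposition 4.2.6, Lemma 4.2.11, Lemma 4.2.10). The remaining five families encode Lemma 4.2.13 (1)--(5), rewritten using Lemma 4.2.10 to express $[V_n]^2$ and $(1+f_4(y))[V_n]$ in the polynomial form appearing in $U$; this is a direct translation exercise, not a computation.

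The key step is injectivity. Here I would produce a $\mathbb{Z}$-spanning set of $\mathbb{Z}[X]/I_0$ that $\overline{\phi}$ sends bijectively to the basis
\[
\{y^j,\ y^l z_+^k,\ y^l z_-^k,\ y^l w_{k,\eta}\mid 0\<j\<2n-2,\ 0\<l\<n-2,\ k\>1,\ \eta\in\overline{\Bbbk}\}
\]
from Proposition 4.2.15. Using the relation $f_1(y)f_2(y)=0$ one reduces powers of $y$ beyond $y^{2n-2}$ (exactly as in Corollary 4.2.7). Using $z_+z_-=1+f_1(y)(2y+4f_3(y))$ one kills mixed monomials $z_+^a z_-^b$ with $a,b\geq 1$. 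The relations $f_1(y)(z_\pm-\cdots)$, $f_1(y)(w_{k,\eta}-\cdots)$, and the three relations of the shape $(z_\pm-f_4(y))w_{k,\eta}$, $w_{k,\eta}w_{s,\a}$, $w_{k,\eta}(w_{t,\eta}-\cdots)$ allow one, inside any monomial, to push the $y$-degree below $n-1$ whenever a $z_\pm$ or $w_{*,*}$ factor appears, and simultaneously to eliminate any product of two band-module or one string-and-one-band generators. Iterating these reductions rewrites every monomial as a $\mathbb{Z}$-linear combination of the four listed families.

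The main obstacle is organising these rewrite rules into a confluent reduction so that the spanning argument is rigorous; once that is done, counting against Proposition 4.2.15 forces $\overline{\phi}$ to take a spanning set to a basis, hence to be a $\mathbb{Z}$-module isomorphism, and therefore a ring isomorphism. The whole argument is parallel to \cite[Theorem 3.10]{SunHasLinCh}, so I would state the three reduction lemmas explicitly and then refer to that paper for the bookkeeping.
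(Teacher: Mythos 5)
Your proposal is correct and is exactly the route the paper takes: the paper's own ``proof'' is just the remark that one argues as in \cite[Theorem 3.10]{SunHasLinCh}, i.e.\ surjectivity from Proposition \ref{4.2.14}, the relations in $U$ from Proposition \ref{4.2.6} and Lemmas \ref{4.2.4}, \ref{4.2.11}, \ref{4.2.13}, and injectivity by matching a spanning set of $\mathbb{Z}[X]/I_0$ against the $\mathbb Z$-basis of Proposition \ref{4.2.15}. One small remark: you do not actually need confluence of the rewriting system, since linear independence of the spanning set is inherited from its image under $\overline{\phi}$, so any terminating reduction to the listed monomials suffices.
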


\subsection{\bf Green ring of $\widetilde{U}_q$}\selabel{5.5}
In this subsection, we investigate the projective class ring $r_p(\widetilde{U}_q)$ and the Green ring $r(\widetilde{U}_q)$ of the small quasi-quantum group $\widetilde{U}_q$. First of all, we have $r(\overline{U}_q)\subseteq r(\widetilde{U}_q)$ and $r_p(\overline{U}_q)\subseteq r_p(\widetilde{U}_q)$.
It follows from \seref{5.4} that both $r_p(\widetilde{U}_q)$ and $r(\widetilde{U}_q)$ are commutative.

By \cite[Corollary 3.2]{ChHasSun}, Corollary \ref{5.2.6}, Theorem \ref{5.3.10} and  the discussion in \seref{5.4}, it follows that the subcategory consisting of semisimple $H$-modules and projective $H$-modules is a monoidal subcategory of $\mathrm{Mod}H$. Hence we have the following lemma.

\begin{lemma}\label{5.5.1}
$r_p(\widetilde{U}_q)$ is a free $\mathbb Z$-module with a $\mathbb Z$-basis
$$\{ [V_l], [P_t], [{\bf V}(t,r)]|1\<l\<n, 1\<t\<n-1, r\in\mathbb{Z}_n\}.$$
\end{lemma}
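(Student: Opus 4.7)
The plan is to identify the listed elements as exactly the classes of the indecomposable projectives together with the simples of $H$, then to verify that this combined collection is closed under tensor product (modulo direct sums), which makes its $\mathbb{Z}$-span a subring containing all projectives and simples, hence equal to $r_p(\widetilde{U}_q)$.

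First I would enumerate the indecomposable projective $H$-modules. By the block decomposition $H = He_0 \times He_1 \times \cdots \times He_{n-1}$, a module is projective iff its restriction to each $He_i$ is projective. For $i=0$, the algebra isomorphism $He_0\cong\overline{U}_q$ identifies the indecomposable projectives with $P_1,\ldots,P_{n-1}$ together with the projective-simple $V_n = P_n$. For $1\leq i\leq n-1$, Corollary \ref{5.3.8} says $He_i$ is semisimple, so every indecomposable $He_i$-module is projective-simple; by Corollary \ref{5.2.6} these are exactly the $\mathbf{V}(n-i,r)$ for $r\in\mathbb{Z}_n$. Combining with the simple modules from Corollary \ref{5.2.6}, the list $\{V_l, P_t, \mathbf{V}(t,r)\}$ exhausts all simples and all indecomposable projectives of $H$ (with $V_n=P_n$ counted once).

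Next I would establish linear independence. Since $r(H)$ is the free abelian group on the isomorphism classes of indecomposable $H$-modules, and since the elements in our list are pairwise non-isomorphic indecomposables (using Proposition \ref{5.2.4} for distinctness among the $\mathbf{V}(t,r)$), they form part of a $\mathbb{Z}$-basis of $r(H)$ and hence are $\mathbb{Z}$-linearly independent in $r_p(\widetilde{U}_q)\subseteq r(H)$.

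For spanning, I would show that the $\mathbb{Z}$-span $R_0$ of the listed classes is a subring of $r(H)$ containing all $[V]$ with $V$ simple or projective. Containment is clear. To see $R_0$ is a subring, it suffices to check that every pairwise tensor product of simple or indecomposable projective $H$-modules decomposes as a direct sum of simples and indecomposable projectives. Tensor products with a projective are projective by \cite[Proposition 4.2.12]{EGNO}, and decompose into indecomposable projectives. For the remaining simple-by-simple case, one uses the decomposition rules developed earlier: \cite[Corollary 3.2]{ChHasSun} handles $V_l\otimes V_{l'}$ (a sum of $V_k$'s and $P_k$'s); Proposition \ref{5.4.3} handles $V_l\otimes \mathbf{V}(t,r)$ (a sum of $\mathbf{V}$'s); Proposition \ref{5.4.10} and Theorem \ref{5.4.11} handle $\mathbf{V}(t,r)\otimes \mathbf{V}(t',r')$, giving a sum of $\mathbf{V}$'s when $t+t'\not\equiv 0\pmod n$ and a sum of $P_l$'s (together with $V_n$) when $t+t'=n$. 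Thus $R_0$ is a subring, and by definition $r_p(\widetilde{U}_q)\subseteq R_0$, completing the proof.

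The only potentially delicate point is the closure under tensor products, but this is already carried out in Section 4 and in \cite{ChHasSun}, so the argument is essentially a bookkeeping assembly of those decomposition rules rather than new work.
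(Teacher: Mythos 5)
Your proposal is correct and follows essentially the same route as the paper: the paper's justification is precisely that the subcategory of semisimple and projective $H$-modules is a monoidal subcategory (citing \cite[Corollary 3.2]{ChHasSun}, Corollary \ref{5.2.6}, Theorem \ref{5.3.10} and the decomposition rules of Section 4), which is exactly your closure-under-tensor argument; you merely spell out the bookkeeping of which simples and indecomposable projectives occur in each block. No gaps.
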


For any $1\<t\<n-1$, let $x_t=[{\bf V}(t,0)]$, $\varepsilon_t=[{\bf V}(t,1)]$ and $y=[V_2]$ in $r(\widetilde{U}_q)$.
\begin{lemma}\label{5.5.2}
Let $1\<t\<n-1$ and $r\>2$. Then
$$\begin{array}{rl}
[{\bf V}(t,r)]=&\sum_{i=0}^{[\frac{r-1}{2}]}(-1)^{i}\binom{r-1-i}{i}(y^2-2)^{r-1-2i}\varepsilon_t\\
&- \sum_{i=0}^{[\frac{r-2}{2}]}(-1)^{i}\binom{r-2-i}{i}(y^2-2)^{r-2-2i}x_t.\\
\end{array}$$
\end{lemma}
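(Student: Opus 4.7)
The plan is to establish a three-term recurrence in $r$ for $[{\bf V}(t,r)]$ and then induct. The recurrence is obtained by tensoring ${\bf V}(t,r)$ with $V_3$. First, from $V_2 \otimes V_2 \cong V_1 \oplus V_3$ in $\mathrm{Mod}\overline{U}_q$ (the $l = 2$ instance of the classical Clebsch-Gordan rule from \cite[Proposition 3.1]{ChHasSun} already invoked in the proof of Proposition \ref{5.4.3}), one has $[V_3] = y^2 - 1$ in $r(\widetilde{U}_q)$. Next, apply Proposition \ref{5.4.3}(1) with the odd value $l = 3$: for every $r$,
$$V_3 \otimes {\bf V}(t,r) \cong {\bf V}(t, r-1) \oplus {\bf V}(t, r) \oplus {\bf V}(t, r+1),$$
and comparing classes in $r(\widetilde{U}_q)$ gives $(y^2 - 1)[{\bf V}(t,r)] = [{\bf V}(t, r-1)] + [{\bf V}(t, r)] + [{\bf V}(t, r+1)]$, i.e.,
$$[{\bf V}(t, r+1)] = (y^2 - 2)[{\bf V}(t, r)] - [{\bf V}(t, r-1)]. \quad (\ast)$$

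Setting $u := y^2 - 2$ and writing $S_r(u) := \sum_{i=0}^{[(r-1)/2]}(-1)^i\binom{r-1-i}{i} u^{r-1-2i}$ with the convention $S_0(u) = 0$ and $S_1(u) = 1$, the stated formula becomes the compact identity $[{\bf V}(t,r)] = S_r(u)\,\varepsilon_t - S_{r-1}(u)\, x_t$. I would prove this by induction on $r$. The base case $r = 2$ reads $(u)\varepsilon_t - x_t$ and is precisely $(\ast)$ with $r$ replaced by $1$, using $[{\bf V}(t,0)] = x_t$ and $[{\bf V}(t,1)] = \varepsilon_t$. For the inductive step, substituting the inductive hypothesis at $r$ and $r-1$ into $(\ast)$ reduces the claim to the single polynomial identity
$$S_{r+1}(u) = u\, S_r(u) - S_{r-1}(u),$$
applied separately to the $\varepsilon_t$-coefficient and to the $x_t$-coefficient. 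This is the standard Chebyshev-type recurrence for $U$-polynomials and follows by Pascal's rule $\binom{r-i}{i} = \binom{r-i-1}{i} + \binom{r-i-1}{i-1}$, comparing coefficients of each $u^{r-2i}$.

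The main obstacle is purely bookkeeping: matching the two telescoping binomial sums (one attached to $\varepsilon_t$, one to $x_t$) against $(\ast)$, and verifying that both reduce to the same Chebyshev identity above. There is no analytic or structural difficulty once $(\ast)$ is in hand. Finally, since the argument lives entirely inside $r(\widetilde{U}_q)$ and the labels $r \in \mathbb{Z}_n$ from Proposition \ref{5.2.4} are respected by $(\ast)$, no separate treatment of the wrap-around $r \equiv 0 \pmod{n}$ is needed.
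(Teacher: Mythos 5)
Your proposal is correct and follows essentially the same route as the paper's proof: both derive the recurrence $[{\bf V}(t,r+1)]=(y^2-2)[{\bf V}(t,r)]-[{\bf V}(t,r-1)]$ from $V_3\otimes {\bf V}(t,r)$ via Proposition \ref{5.4.3} together with $[V_3]=y^2-1$, and then induct on $r$. Your Chebyshev-polynomial packaging $S_{r+1}(u)=uS_r(u)-S_{r-1}(u)$ is just a cleaner bookkeeping of the binomial manipulation the paper carries out explicitly.
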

\begin{proof}
We work by induction on $r$. Let $k\in\mathbb Z$. By Proposition \ref{5.4.3}, one gets $$V_3\otimes {\bf V}(t,k)\cong {\bf V}(t,k-1)\oplus {\bf V}(t,k)\oplus {\bf V}(t,k+1).$$ Then $[{\bf V}(t,k+1)]=([V_3]-1)[{\bf V}(t,k)]-[{\bf V}(t,k-1)].$ By Proposition \ref{4.2.3}, we have $[V_3]=y^2-1$. It follows that
$$[{\bf V}(t,k+1)]=(y^2-2)[{\bf V}(t,k)]-[{\bf V}(t,k-1)].$$ Putting $k=1$, one gets the lemma for $r=2$. Putting $k=2$, one gets the lemma for $r=3$. Now assume $r\>3$. Then by the above equation  and the induction hypothesis, we have
$$\begin{array}{rl}
[{\bf V}(t,r+1)]=&(y^2-2)[{\bf V}(t,r)]-[{\bf V}(t,r-1)]\\
=&(y^2-2)(\sum_{i=0}^{[\frac{r-1}{2}]}(-1)^{i}\binom{r-1-i}{i}(y^2-2)^{r-1-2i}\varepsilon_t\\ &-\sum_{i=0}^{[\frac{r-2}{2}]}(-1)^{i}\binom{r-2-i}{i}(y^2-2)^{r-2-2i}x_t)\\
&-\sum_{i=0}^{[\frac{r-2}{2}]}(-1)^{i}\binom{r-2-i}{i}(y^2-2)^{r-2-2i}\varepsilon_t\\ &+\sum_{i=0}^{[\frac{r-3}{2}]}(-1)^{i}\binom{r-3-i}{i}(y^2-2)^{r-3-2i}x_t\\
=&\sum_{i=0}^{[\frac{r}{2}]}(-1)^{i}\binom{r-i}{i}(y^2-2)^{r-2i}\varepsilon_t\\ &-\sum_{i=0}^{[\frac{r-1}{2}]}(-1)^{i}\binom{r-1-i}{i}
 (y^2-2)^{r-1-2i}x_t.
\end{array}$$
This completes the proof.
\end{proof}

\begin{corollary}\label{5.5.3}
 $r_p(\widetilde{U}_q)$ is generated as a ring by $\{y,x_t,\varepsilon_t|1\<t\<n-1\}$.
\end{corollary}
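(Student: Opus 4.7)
The proof will be a direct assembly of earlier results, so the plan is essentially organizational rather than conceptual. By \leref{5.5.1}, $r_p(\widetilde{U}_q)$ is spanned as a $\mathbb{Z}$-module by the classes $[V_l]$, $[P_t]$, and $[{\bf V}(t,r)]$ for $1\<l\<n$, $1\<t\<n-1$ and $r\in\mathbb{Z}_n$. It therefore suffices to express each of these basis elements as a polynomial (with integer coefficients) in the proposed generators $y, x_t, \varepsilon_t$.

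First I would handle the $\ol{U}_q$-side. By \coref{4.2.5} (or equivalently \leref{4.2.4}(1)), every $[V_l]$ with $1\<l\<n$ is already a polynomial in $y=[V_2]$. For the indecomposable projectives $[P_t]$, \leref{4.2.4}(2) expresses $[P_t]$ as a polynomial in $y$ multiplied by $[V_n]$; since $[V_n]$ itself is a polynomial in $y$, so is $[P_t]$. Thus the $\ol{U}_q$-part of the basis lies in the subring $\mathbb{Z}[y]$.

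Next I would deal with the simple $He_i$-modules ${\bf V}(t,r)$ for $1\<i\<n-1$. The cases $r=0$ and $r=1$ are trivial since $[{\bf V}(t,0)]=x_t$ and $[{\bf V}(t,1)]=\varepsilon_t$ by definition. For $r\>2$, apply \leref{5.5.2}, which expresses $[{\bf V}(t,r)]$ as a polynomial in $y$, $x_t$ and $\varepsilon_t$ (with integer coefficients involving $(y^2-2)^{\bullet}$). Combining these three observations, every basis vector from \leref{5.5.1} lies in the subring generated by $\{y, x_t, \varepsilon_t\mid 1\<t\<n-1\}$, so this subring is all of $r_p(\widetilde{U}_q)$.

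There is no real obstacle here: the content of the corollary is already packaged in \leref{5.5.1}, \leref{4.2.4} and \leref{5.5.2}, and the proof amounts to checking that each of the three types of basis elements is covered by one of these three results.
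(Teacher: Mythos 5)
Your proposal is correct and follows exactly the paper's route: the paper's proof simply cites Corollary~\ref{co:4.2.5} (for the $[V_l]$ and, via Lemma~\ref{le:4.2.4}, the $[P_t]$), Lemma~\ref{le:5.5.1} (for the $\mathbb{Z}$-basis), and Lemma~\ref{le:5.5.2} (for $[{\bf V}(t,r)]$ with $r\>2$), which is precisely the assembly you carry out. Your write-up just makes the implicit details explicit.
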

\begin{proof}
Follows from Corollary \ref{4.2.5} and Lemmas \ref{5.5.1} and \ref{5.5.2}.
\end{proof}

\begin{proposition}\label{5.5.6}
Let $1\<t\<n-1$ and $r\>1$.
\begin{enumerate}
\item[(1)] If $r$ is odd, then
$$\begin{array}{l}
\vspace{0.1cm}
\sum_{i=0}^{\frac{r-1}{2}}(-1)^i\frac{r}{r-i}\binom{r-i}{i}y^{r-2i}x_t=[{\bf V}(t,\frac{n-r}{2})]+ [{\bf V}(t,\frac{n+r}{2})],\\
\sum_{i=0}^{\frac{r-1}{2}}(-1)^i\frac{r}{r-i}\binom{r-i}{i}y^{r-2i}\varepsilon_t=[{\bf V}(t,1+\frac{n-r}{2})]+ [{\bf V}(t,1+\frac{n+r}{2})].\\
\end{array}$$
\item[(2)] If $r$ is even, then
$$\begin{array}{l}
\vspace{0.1cm}
\sum_{i=0}^{\frac{r}{2}}(-1)^i\frac{r}{r-i}\binom{r-i}{i}y^{r-2i}x_t=[{\bf V}(t,-\frac{r}{2})]+ [{\bf V}(t,\frac{r}{2})],\\
\sum_{i=0}^{\frac{r}{2}}(-1)^i\frac{r}{r-i}\binom{r-i}{i}y^{r-2i}\varepsilon_t=[{\bf V}(t,1-\frac{r}{2})]+ [{\bf V}(t,1+\frac{r}{2})].\\
\end{array}$$
\end{enumerate}
 \end{proposition}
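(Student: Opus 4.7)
The plan is to recognize the coefficient polynomial
\[ D_r(y) := \sum_{i=0}^{[r/2]} (-1)^i \frac{r}{r-i}\binom{r-i}{i} y^{r-2i} \]
as a Chebyshev-type polynomial satisfying the three-term recursion $D_{r+1}(y) = y D_r(y) - D_{r-1}(y)$ with $D_0(y)=2$ and $D_1(y)=y$. This is a classical identity that can be verified directly by binomial manipulation (equivalently, via the substitution $y=\alpha+\alpha^{-1}$, under which $D_r(y)=\alpha^r+\alpha^{-r}$). With the recursion in hand, both identities will be proved by induction on $r$; the argument for $\varepsilon_t$ is parallel to the argument for $x_t$, obtained by replacing the initial module $\mathbf{V}(t,0)$ with $\mathbf{V}(t,1)$ so that every second coordinate shifts by $1$.

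For the base cases, the case $r=1$ is literally Lemma \ref{5.4.2} applied to $\mathbf{V}(t,0)$, since $D_1(y)=y=[V_2]$. For $r=2$, one applies $y$ twice to $x_t$ using Lemma \ref{5.4.2}; the four resulting summands are $[\mathbf{V}(t,n-1)]$, $[\mathbf{V}(t,n)]$, $[\mathbf{V}(t,n)]$, $[\mathbf{V}(t,n+1)]$, which after reduction modulo $n$ via Proposition \ref{5.2.4} give $[\mathbf{V}(t,-1)]+2x_t+[\mathbf{V}(t,1)]$, whence $(y^2-2)x_t=[\mathbf{V}(t,-1)]+[\mathbf{V}(t,1)]$.

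For the inductive step, assume the identities for $D_{r-1}(y)x_t$ and $D_r(y)x_t$. By the Chebyshev recursion,
\[ D_{r+1}(y)x_t \;=\; y\bigl(D_r(y)x_t\bigr) - D_{r-1}(y)x_t. \]
Expand $y(D_r(y)x_t)$ by applying Lemma \ref{5.4.2} to each of the two summands $[\mathbf{V}(t,\cdot)]$ provided by the induction hypothesis for $D_r(y)x_t$. Reducing second coordinates modulo $n$ (justified by Proposition \ref{5.2.4}), the four resulting simple summands split exactly as the two summands of $D_{r-1}(y)x_t$ plus the two summands predicted for $D_{r+1}(y)x_t$, with the parity of the formula switching correctly from $r$ to $r+1$. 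The subtraction then cancels the $D_{r-1}(y)x_t$ contribution and leaves the claimed identity.

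The main technical burden is bookkeeping: verifying the Chebyshev recursion for $D_r(y)$ and tracking the two parity regimes simultaneously, since the claimed index pair alternates between $\{\tfrac{n-r}{2},\tfrac{n+r}{2}\}$ (for odd $r$) and $\{-\tfrac{r}{2},\tfrac{r}{2}\}$ (for even $r$). No conceptually new ingredient beyond Lemma \ref{5.4.2} and the simple-module classification of Proposition \ref{5.2.4} is needed.
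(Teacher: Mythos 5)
Your proposal is correct and follows essentially the same route as the paper: induction on $r$ with base cases $r=1,2$ from Lemma \ref{5.4.2}, multiplying the case $r$ by $y$ via Lemma \ref{5.4.2}, and subtracting the case $r-1$; the Chebyshev/Dickson three-term recursion you isolate up front is exactly the binomial identity the paper applies inline in its final step to recombine the coefficients. The only cosmetic difference is that you name the recursion $D_{r+1}=yD_r-D_{r-1}$ explicitly rather than verifying it term-by-term inside the induction.
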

\begin{proof}
We only prove the first equations of $(1)$ and $(2)$ since the proofs are similar for the second equations. We work  by induction on $r$. For $r=1$, it follows from Lemma \ref{5.4.2}. For $r=2$, it follows from the proof of Lemma \ref{5.5.2}. Now let $r\>2$. If $r$ is even, then $r-1$ and $r+1$ are odd. In this case, by the induction hypothesis and Lemma \ref{5.4.2}, we have
$$\begin{array}{rl}
\vspace{0.1cm}
&\sum_{i=0}^{\frac{r}{2}}(-1)^i\frac{r}{r-i}\binom{r-i}{i}y^{r+1-2i}x_t=y[{\bf V}(t, -\frac{r}{2})]+y[{\bf V}(t, \frac{r}{2})]\\
=&[{\bf V}(t,\frac{n-1-r}{2})]+ [{\bf V}(t,\frac{n+1-r}{2})]+[{\bf V}(t,\frac{n-1+r}{2})]+[{\bf V}(t,\frac{n+1+r}{2})]\\
\end{array}$$
and
$$\begin{array}{c}
\sum_{i=0}^{\frac{r-2}{2}}(-1)^i\frac{r-1}{r-1-i}\binom{r-1-i}{i}y^{r-1-2i}x_t=[{\bf V}(t,\frac{n-(r-1)}{2})]+ [{\bf V}(t,\frac{n+(r-1)}{2})].\\
\end{array}$$
Then we have
$$\begin{array}{rl}
\vspace{0.1cm}
&[{\bf V}(t,\frac{n-(r+1)}{2})]+ [{\bf V}(t,\frac{n+(r+1)}{2})]\\
\vspace{0.1cm}
=&\sum_{i=0}^{\frac{r}{2}}(-1)^i\frac{r}{r-i}\binom{r-i}{i}y^{r+1-2i}x_t-\sum_{i=0}^{\frac{r-2}{2}}
(-1)^i\frac{r-1}{r-1-i}\binom{r-1-i}{i}y^{r-1-2i}x_t \\
=&\sum_{i=0}^{\frac{r}{2}}(-1)^i\frac{r+1}{r+1-i}\binom{r+1-i}{i}y^{r+1-2i}x_t.\\
\end{array}$$

Similarly, if $r$ is odd, then $r-1$ and $r+1$ are even. Hence
$$\begin{array}{rl}
\vspace{0.1cm}
&\sum_{i=0}^{\frac{r-1}{2}}(-1)^i\frac{r}{r-i}\binom{r-i}{i}y^{r+1-2i}x_t=y[{\bf V}(t,\frac{n-r}{2})]+y[{\bf V}(t,\frac{n+r}{2})]\\
=& [{\bf V}(t, -\frac{r+1}{2})]+ [{\bf V}(t, -\frac{r-1}{2})]+[{\bf V}(t,\frac{r-1}{2})]+[{\bf V}(t,\frac{r+1}{2})]
\end{array}$$
and
$$\begin{array}{c}
\sum_{i=0}^{\frac{r-1}{2}}(-1)^i\frac{r-1}{r-1-i}\binom{r-1-i}{i}y^{r-1-2i}x_t=[{\bf V}(t, -\frac{r-1}{2})]+ [{\bf V}(t, \frac{r-1}{2})].\\
\end{array}$$
Thus, we have
$$\begin{array}{rl}
\vspace{0.1cm}
&[{\bf V}(t, -\frac{r+1}{2})]+ [{\bf V}(t, \frac{r+1}{2})]\\
\vspace{0.1cm}
=&\sum_{i=0}^{\frac{r-1}{2}}(-1)^i\frac{r}{r-i}\binom{r-i}{i}y^{r+1-2i}x_t-\sum_{i=0}^{\frac{r-1}{2}}
(-1)^i\frac{r-1}{r-1-i}\binom{r-1-i}{i}y^{r-1-2i}x_t\\
=&\sum_{i=0}^{\frac{r+1}{2}}(-1)^i\frac{r+1}{r+1-i}\binom{r+1-i}{i}y^{r+1-2i}x_t.\\
\end{array}$$
This completes the proof.
\end{proof}

\begin{corollary}\label{5.5.8}
Let $1\<t\<n-1$. Then in $r_p(\widetilde{U}_q)$, we have
$$\begin{array}{c}
(\sum_{k=1}^{\frac{n-1}{2}}\sum_{j=0}^{[\frac{n-1-2k}{4}]}(-1)^j\frac{k+2j}{k+j}\binom{k+j}{j}y^{k}
+\sum_{1\<j\<[\frac{n-1}{4}]}(-1)^j2+1)(x_t-\varepsilon_t)=0.
\end{array}$$
\end{corollary}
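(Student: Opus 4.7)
The plan is to first rewrite the long polynomial in the statement in the compact form
$$g(y):=1+\sum_{r=1}^{(n-1)/2}\Psi_r(y),$$
where $\Psi_r(y)$ denotes the Chebyshev-type polynomial $\sum_{i=0}^{[r/2]}(-1)^i\frac{r}{r-i}\binom{r-i}{i}y^{r-2i}$ appearing on the left side of Proposition \ref{5.5.6}, and then to prove that $g(y)\,x_t=g(y)\,\varepsilon_t=\sum_{s\in\mathbb{Z}_n}[{\bf V}(t,s)]$, from which $g(y)(x_t-\varepsilon_t)=0$ is immediate. The rewriting itself is a routine re-indexing: substituting $r=k+2j$ and swapping the order of summation identifies the coefficient of $y^k$ for $k\>1$ with $\sum_{j=0}^{[(n-1-2k)/4]}(-1)^j\frac{k+2j}{k+j}\binom{k+j}{j}$, while the constant contributions from those $\Psi_r$ with $r$ even (each equal to $(-1)^{r/2}\cd 2$) combine with the explicit $+1$ to produce the stated constant term $1+2\sum_{j=1}^{[(n-1)/4]}(-1)^j$.

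Next, Proposition \ref{5.5.6} gives $\Psi_r(y)\,x_t=[{\bf V}(t,a_r)]+[{\bf V}(t,b_r)]$ for each $1\<r\<(n-1)/2$, where $(a_r,b_r)=(\frac{n-r}{2},\frac{n+r}{2})$ if $r$ is odd and $(a_r,b_r)=(-\frac{r}{2},\frac{r}{2})$ if $r$ is even. Because $n$ is odd, $2$ is invertible in $\mathbb{Z}_n$ with inverse $2^{-1}=(n+1)/2$, and one checks (via $(n\pm r)/2\equiv\pm r\cd 2^{-1}\ ({\rm mod}\ n)$ in the odd case) that both cases satisfy the uniform description $\{a_r,b_r\}=\{r\cd 2^{-1},-r\cd 2^{-1}\}$ in $\mathbb{Z}_n$. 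This uniform form bypasses a parity case-split in what follows.

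The main obstacle, and the combinatorial heart of the proof, is to verify that as $r$ ranges over $\{1,2,\ldots,(n-1)/2\}$, the pairs $\{r\cd 2^{-1},-r\cd 2^{-1}\}$ are exactly the $(n-1)/2$ antipodal pairs $\{x,-x\}$ into which $\mathbb{Z}_n\setminus\{0\}$ decomposes. This reduces to observing that multiplication by $2^{-1}$ is a bijection on $\mathbb{Z}_n$ and that no two distinct elements of $\{1,\ldots,(n-1)/2\}$ are negatives of one another modulo $n$. Granted this,
$$g(y)\,x_t=[{\bf V}(t,0)]+\sum_{r=1}^{(n-1)/2}\bigl([{\bf V}(t,a_r)]+[{\bf V}(t,b_r)]\bigr)=\sum_{s\in\mathbb{Z}_n}[{\bf V}(t,s)],$$
and the analogous computation starting from the $\varepsilon_t$-identities in Proposition \ref{5.5.6} produces shifted pairs $\{a_r+1,b_r+1\}$, which partition $\mathbb{Z}_n\setminus\{1\}$ and, combined with $\varepsilon_t=[{\bf V}(t,1)]$, give the same total $\sum_{s\in\mathbb{Z}_n}[{\bf V}(t,s)]$. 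Subtracting the two identities yields $g(y)(x_t-\varepsilon_t)=0$, as required.
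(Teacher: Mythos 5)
Your proposal is correct and follows essentially the same route as the paper: both apply Proposition \ref{5.5.6} to show that $(1+\sum_{r=1}^{(n-1)/2}\Psi_r(y))x_t$ and $(1+\sum_{r=1}^{(n-1)/2}\Psi_r(y))\varepsilon_t$ each equal $\sum_{j=0}^{n-1}[{\bf V}(t,j)]$, and then re-index the double sum to match the stated polynomial. The only difference is that you spell out, via the uniform description $\{a_r,b_r\}=\{r\cdot 2^{-1},-r\cdot 2^{-1}\}$ and the antipodal-pair partition of $\mathbb{Z}_n\setminus\{0\}$, the verification that the paper compresses into ``one can check.''
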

\begin{proof}
 By Proposition \ref{5.5.6}, one can check that
 $$\begin{array}{c}
 (\sum_{r=1}^{\frac{n-1}{2}}\sum_{i=0}^{[\frac{r}{2}]}(-1)^i\frac{r}{r-i}\binom{r-i}{i}y^{r-2i}+1)x_t=\sum_{j=0}^{n-1}[{\bf V}(t,j)]\\
 \end{array}$$
and
$$\begin{array}{c}
(\sum_{r=1}^{\frac{n-1}{2}}\sum_{i=0}^{[\frac{r}{2}]}(-1)^i\frac{r}{r-i}\binom{r-i}{i}y^{r-2i}+1)\varepsilon_t=\sum_{j=0}^{n-1}[{\bf V}(t,j)].\\
\end{array}$$
By a straightforward computation, one obtains
$$\begin{array}{rl}
\vspace{0.1cm}
&\sum_{r=1}^{\frac{n-1}{2}}\sum_{i=0}^{[\frac{r}{2}]}(-1)^i\frac{r}{r-i}\binom{r-i}{i}y^{r-2i}\\
=&\sum_{1\<j\<[\frac{n-1}{4}]}(-1)^j2+\sum_{k=1}^{\frac{n-1}{2}}\sum_{j=0}^{[\frac{n-1-2k}{4}]}(-1)^j\frac{k+2j}{k+j}\binom{k+j}{j}y^{k}.
\end{array}$$
Thus, the corollary follows.
\end{proof}

\begin{corollary}\label{5.5.5}
Let $1\<t,t'\<n-1$. Then the following hold in $r_p(\widetilde{U}_q)$.\\
\begin{enumerate}
\item[(1)] If $t+t'<n$, then
$$\begin{array}{rl}
&x_tx_{t'}=x_{t}\varepsilon_{t'}=\varepsilon_t \varepsilon_{t'}\\
=&\sum_{k=1}^{\frac{n-1}{2}}\sum_{j=0}^{[\frac{n-1-2k}{4}]}(-1)^j\frac{k+2j}{k+j}\binom{k+j}{j}y^{k}
+\sum_{1\<j\<[\frac{n-1}{4}]}(-1)^j2+1)x_{t+t'}.
\end{array}$$
\item[(2)] If $t+t'>n$, then
$$\begin{array}{rl}
&x_tx_{t'}=x_{t}\varepsilon_{t'}=\varepsilon_t \varepsilon_{t'}\\
=&(\sum_{k=1}^{\frac{n-1}{2}}\sum_{j=0}^{[\frac{n-1-2k}{4}]}(-1)^j\frac{k+2j}{k+j}\binom{k+j}{j}y^{k}
+\sum_{1\<j\<[\frac{n-1}{4}]}(-1)^j2+1)x_{t+t'-n}.
\end{array}$$
\item[(3)] If $t+t'=n$, then
$$\begin{array}{rl}
x_tx_{t'}=x_{t}\varepsilon_{t'}
=&(1+\sum_{i=1}^{\frac{n-1}{2}}(-1)^{i-1}\binom{n-1-i}{i-1}y^{n-2i})\\
&\times(\sum_{i=0}^{\frac{n-1}{2}}(-1)^{i}\binom{n-1-i}{i}y^{n-1-2i}).\\
\end{array}$$
and
$$\begin{array}{rl}
\varepsilon_{t}\varepsilon_{t'}
=&(\sum_{2\<i\<\frac{n-1}{2}}(-1)^{i}\binom{n-2-i}{i-2}y^{n-2i}+y^2-1)\\
&\times(\sum_{i=0}^{\frac{n-1}{2}}(-1)^{i}\binom{n-1-i}{i}y^{n-1-2i}).\\
\end{array}$$
\end{enumerate}
\end{corollary}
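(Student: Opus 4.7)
The plan is to derive all three formulas by combining the tensor product decompositions from Proposition \ref{5.4.10} and Theorem \ref{5.4.11} with the polynomial expressions for the simple and projective classes given by Lemmas \ref{4.2.4} and \ref{4.2.10}.

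For parts (1) and (2), the key observation is that in Proposition \ref{5.4.10} the decomposition $\mathbf{V}(t,r)\otimes\mathbf{V}(t',r')\cong\oplus_{j=0}^{n-1}\mathbf{V}(s,j)$, with $s=t+t'$ or $s=t+t'-n$, does not depend on $r$ or $r'$. Hence the three products $x_tx_{t'}$, $x_t\varepsilon_{t'}$, $\varepsilon_t\varepsilon_{t'}$ all coincide with $\sum_{j=0}^{n-1}[\mathbf{V}(s,j)]$, and the conversion of this sum into the displayed polynomial in $y$ times $x_s$ has already been carried out inside the proof of Corollary \ref{5.5.8}. Substituting $s=t+t'$ or $s=t+t'-n$ then gives (1) and (2).

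For part (3), where $t+t'=n$, I would apply Theorem \ref{5.4.11}(1) with $u\equiv r+r'\pmod n$ equal to $0,1,2$ for $x_tx_{t'}$, $x_t\varepsilon_{t'}$, $\varepsilon_t\varepsilon_{t'}$ respectively. When $u=0$ or $u=1$ the summand $\oplus_{1\<j\<u-1}P_{n-2u+2j}$ is empty, so both decompositions reduce to $V_n\oplus\oplus_{j=1}^{(n-1)/2}P_{2j}$, which immediately establishes $x_tx_{t'}=x_t\varepsilon_{t'}$; applying Lemma \ref{4.2.10}(3) to rewrite $\sum_{j=1}^{(n-1)/2}[P_{2j}]$ as a polynomial in $y$ times $[V_n]$, and expanding $[V_n]$ by Lemma \ref{4.2.4}(1), yields the first displayed formula of (3). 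The marginal case $n=3$ (where $u=2$ invokes Theorem \ref{5.4.11}(2) instead of (1)) is handled by a direct check.

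For $\varepsilon_t\varepsilon_{t'}$ with $u=2$ (assuming $n\>5$), Theorem \ref{5.4.11}(1) gives $V_n\oplus\oplus_{j=2}^{(n-1)/2}P_{2j}\oplus P_{n-2}$. By Lemma \ref{4.2.4}(2), $[P_{n-2}]=(y^2-2)[V_n]$, and $\sum_{j=2}^{(n-1)/2}[P_{2j}]$ is obtained from Lemma \ref{4.2.10}(3) by subtracting the $j=1$ term, whose polynomial form comes from Lemma \ref{4.2.4}(2) applied to $[P_2]$. The verification of the claimed formula then reduces to the identity
\begin{equation*}
\sum_{j=2}^{(n-1)/2}[P_{2j}]=\left(\sum_{k=2}^{(n-1)/2}(-1)^k\binom{n-2-k}{k-2}y^{n-2k}\right)[V_n],
\end{equation*}
which after comparing coefficients of each $y^{n-2k}[V_n]$ follows from the Pascal-type identity $\binom{n-k-1}{k-1}+\binom{n-k-2}{k-2}=\frac{n-2}{n-1-k}\binom{n-1-k}{k-1}$; this is immediate by factoring $(n-k-2)!/((n-2k)!(k-1)!)$ from the left-hand side. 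The main obstacle is precisely this coefficient-matching step, which hides a nontrivial binomial simplification; the rest of the argument is a direct assembly of the already established decomposition rules and polynomial expansions, so once this identity is in hand, all three parts of the corollary follow.
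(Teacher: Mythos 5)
Your proposal is correct and follows essentially the same route as the paper, whose proof is just a citation of Proposition \ref{5.4.10} together with the computation inside the proof of Corollary \ref{5.5.8} for parts (1)--(2), and of Theorem \ref{5.4.11} with Lemmas \ref{4.2.4} and \ref{4.2.10} for part (3). You have merely supplied the details the paper leaves implicit, including the correct binomial identity $\binom{n-k-1}{k-1}+\binom{n-k-2}{k-2}=\frac{n-2}{n-1-k}\binom{n-1-k}{k-1}$ needed to match coefficients in the $\varepsilon_t\varepsilon_{t'}$ formula and the separate check for $n=3$.
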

\begin{proof}
Part (1) and Part (2) follow from Proposition \ref{5.4.10} and the proof of Corollary \ref{5.5.8}, Part (3) follows from Theorem \ref{5.4.11}, Lemmas \ref{4.2.4} and \ref{4.2.10}.
\end{proof}

\begin{proposition}\label{5.5.7}
Let $1\<t\<n-1$. Then in $r_p(\widetilde{U}_q)$, we have
$$\begin{array}{c}
(\sum_{i=0}^{[\frac{n+1}{4}]}(-1)^i\frac{n+1}{n+1-2i}\binom{\frac{n+1}{2}-i}{i}y^{\frac{n+1}{2}-2i}-
\sum_{i=0}^{[\frac{n-1}{4}]}(-1)^i\frac{n-1}{n-1-2i}\binom{\frac{n-1}{2}-i}{i}y^{\frac{n-1}{2}-2i})x_t=0.
\end{array}$$
\end{proposition}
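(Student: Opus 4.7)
The plan is to recognize the two polynomial sums in the statement as exactly the expressions appearing on the left-hand side of Proposition \ref{5.5.6}, and then use that result to rewrite the product with $x_t$ as a sum of two classes $[\mathbf{V}(t,\cdot)]$ in each case; a congruence mod $n$ together with Proposition \ref{5.2.4} will show that the two pairs coincide, so their difference vanishes.

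More precisely, let $Q_r(y) := \sum_{i=0}^{[r/2]}(-1)^i \frac{r}{r-i}\binom{r-i}{i} y^{r-2i}$. The first step is to verify that with $r=(n+1)/2$ one has $\frac{r}{r-i} = \frac{n+1}{n+1-2i}$ and $\binom{r-i}{i} = \binom{(n+1)/2 - i}{i}$, and that $[r/2] = [(n+1)/4]$ in both parities of $(n+1)/2$; the same matching holds for $r=(n-1)/2$. Thus the statement to prove is exactly
\[
\bigl(Q_{(n+1)/2}(y) - Q_{(n-1)/2}(y)\bigr)\, x_t = 0.
\]

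Next I would split into two cases according to $n \bmod 4$, since Proposition \ref{5.5.6} has two formulas depending on the parity of $r$.

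\textbf{Case} $n \equiv 1 \pmod 4$: then $r_1=(n+1)/2$ is odd and $r_2=(n-1)/2$ is even. Applying Proposition \ref{5.5.6}(1) and (2) respectively,
\[
Q_{r_1}(y)\,x_t = [\mathbf{V}(t,\tfrac{n-1}{4})] + [\mathbf{V}(t,\tfrac{3n+1}{4})], \qquad
Q_{r_2}(y)\,x_t = [\mathbf{V}(t,-\tfrac{n-1}{4})] + [\mathbf{V}(t,\tfrac{n-1}{4})].
\]
Since $\tfrac{3n+1}{4} \equiv -\tfrac{n-1}{4} \pmod n$, Proposition \ref{5.2.4} gives $[\mathbf{V}(t,\tfrac{3n+1}{4})] = [\mathbf{V}(t,-\tfrac{n-1}{4})]$, so the difference is $0$.

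\textbf{Case} $n \equiv 3 \pmod 4$: then $r_1=(n+1)/2$ is even and $r_2=(n-1)/2$ is odd, and a symmetric computation gives
\[
Q_{r_1}(y)\,x_t = [\mathbf{V}(t,-\tfrac{n+1}{4})] + [\mathbf{V}(t,\tfrac{n+1}{4})], \qquad
Q_{r_2}(y)\,x_t = [\mathbf{V}(t,\tfrac{n+1}{4})] + [\mathbf{V}(t,\tfrac{3n-1}{4})],
\]
and again $-\tfrac{n+1}{4} \equiv \tfrac{3n-1}{4} \pmod n$ yields the cancellation.

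The only real obstacle is bookkeeping: matching the index ranges $[(n\pm 1)/4]$ with $[r/2]$ for the two parities, and checking the two congruences mod $n$ that ensure the unpaired terms coincide as classes in $r_p(\widetilde{U}_q)$. Everything else is a direct application of Propositions \ref{5.5.6} and \ref{5.2.4}, with no further computation required.
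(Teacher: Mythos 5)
Your proposal is correct and follows essentially the same route as the paper: both proofs split into the two cases according to the parity of $\frac{n\pm1}{2}$ (equivalently $n \bmod 4$), apply Proposition \ref{5.5.6} to write each polynomial times $x_t$ as a sum of two classes $[{\bf V}(t,\cdot)]$, and observe that the resulting pairs agree modulo $n$. The index-range and congruence checks you flag are exactly the bookkeeping the paper carries out.
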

\begin{proof}
If $\frac{n-1}{2}$ is odd, then $\frac{n+1}{2}$ is even. In this case, by Proposition \ref{5.5.6}, we have
$$\begin{array}{rl}
\sum_{i=0}^{\frac{n-3}{4}}(-1)^i\frac{n-1}{n-1-2i}\binom{\frac{n-1}{2}-i}{i}y^{\frac{n-1}{2}-2i}x_t
&=[{\bf V}(t, \frac{n+1}{4})]+[{\bf V}(t,\frac{3n-1}{4})]\\
&=[{\bf V}(t, \frac{n+1}{4})]+[{\bf V}(t, -\frac{n+1}{4})].
\end{array}$$
and
$$\begin{array}{rl}
\sum_{i=0}^{\frac{n+1}{4}}(-1)^i\frac{n+1}{n+1-2i}\binom{\frac{n+1}{2}-i}{i}y^{\frac{n+1}{2}-2i}x_t
=[{\bf V}(t, -\frac{n+1}{4})]+[{\bf V}(t,\frac{n+1}{4})].
\end{array}$$
It follows that the equation in the proposition holds in case $\frac{n-1}{2}$ is odd.

Similarly, if $\frac{n-1}{2}$ is even, then  $\frac{n+1}{2}$ is odd. By Proposition \ref{5.5.6}, we have
$$\begin{array}{c}
\sum_{i=0}^{\frac{n-1}{4}}(-1)^i\frac{n-1}{n-1-2i}\binom{\frac{n-1}{2}-i}{i}y^{\frac{n-1}{2}-2i}x_t=[{\bf V}(t,-\frac{n-1}{4})]+ [{\bf V}(t,\frac{n-1}{4})]\\
\end{array}$$ and
 $$\begin{array}{rl}
\sum_{i=0}^{\frac{n-1}{4}}(-1)^i\frac{n+1}{n+1-2i}\binom{\frac{n+1}{2}-i}{i}y^{\frac{n+1}{2}-2i}x_t
&=[{\bf V}(t,\frac{n-1}{4})]+ [{\bf V}(t,\frac{3n+1}{4})]\\
&=[{\bf V}(t,\frac{n-1}{4})]+ [{\bf V}(t,-\frac{n-1}{4})].
\end{array}$$
Thus, the equation in the proposition holds in case $\frac{n-1}{2}$ is even.
\end{proof}

\begin{corollary}\label{5.5.9}
The following set forms a $\mathbb Z$-basis of $r_p(\widetilde{U}_q)$:
$$\begin{array}{c}
\left\{y^i,y^jx_t,y^l\varepsilon_t\left|0\<i\<2n-2,0\<j\<\frac{n-1}{2},0\<l\<\frac{n-3}{2},1\<t\<n-1\right\}\right..\end{array}$$
\end{corollary}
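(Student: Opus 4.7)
My plan is to verify that the proposed set has exactly the rank given by Lemma \ref{5.5.1}, and then to show that it spans $r_p(\widetilde{U}_q)$; $\mathbb{Z}$-linear independence will then follow from the coincidence of cardinalities.

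The counting check is direct. The proposed set has
$(2n-1)+(n-1)\cdot\frac{n+1}{2}+(n-1)\cdot\frac{n-1}{2}=(2n-1)+n(n-1)=n^2+n-1$
elements, which matches the $\mathbb{Z}$-rank $n+(n-1)+n(n-1)=n^2+n-1$ given by Lemma \ref{5.5.1}.

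For the spanning claim, Corollary \ref{5.5.3} reduces the task to rewriting every word in the generators $y$, $x_t$, $\varepsilon_t$ as a $\mathbb{Z}$-combination of elements of the proposed set. By Corollary \ref{5.5.5}, every product of two indexed generators ($x_t x_{t'}$, $x_t \varepsilon_{t'}$, or $\varepsilon_t \varepsilon_{t'}$) collapses to a $\mathbb{Z}$-polynomial in $y$ times at most one $x_{t+t'}$ or $x_{t+t'-n}$, or to a pure polynomial in $y$ when $t+t'=n$. Hence after iterating, every word reduces to one of the shapes $y^i$, $y^j x_t$, or $y^l \varepsilon_t$. It then remains to bound the exponent in each shape. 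For pure $y^i$, Proposition \ref{4.2.6} and Corollary \ref{4.2.7} restrict to $0\leq i\leq 2n-2$. For $y^j x_t$, Proposition \ref{5.5.7} supplies a polynomial in $y$ of degree $(n+1)/2$ with leading coefficient $1$ that annihilates $x_t$, so $y^{(n+1)/2}x_t$ rewrites as a $\mathbb{Z}$-combination of $y^k x_t$ with $k\leq (n-1)/2$. For $y^l \varepsilon_t$, Corollary \ref{5.5.8} supplies a polynomial in $y$ of degree $(n-1)/2$ with leading coefficient $1$ annihilating $x_t-\varepsilon_t$, so rearrangement expresses $y^{(n-1)/2}\varepsilon_t$ in terms of $y^k x_t$ for $k\leq (n-1)/2$ and $y^k \varepsilon_t$ for $k<(n-1)/2$. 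Iterating upward brings every $y^j x_t$ into the range $0\leq j\leq (n-1)/2$ and every $y^l \varepsilon_t$ into $0\leq l\leq (n-3)/2$.

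The step I expect to be the main obstacle is this last reduction: because Corollary \ref{5.5.8} involves the difference $x_t-\varepsilon_t$, the rewrite of $y^l \varepsilon_t$ reintroduces $y$-powers on the $x_t$-side, and one must invoke the $x_t$-reduction from Proposition \ref{5.5.7} alongside it to make sure the induction on $l$ actually terminates in the prescribed range. Once spanning is established, the cardinality coincidence from the first step forces $\mathbb{Z}$-linear independence, so the set is a $\mathbb{Z}$-basis of $r_p(\widetilde{U}_q)$.
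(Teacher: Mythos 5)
Your proof is correct and follows essentially the same route as the paper: establish that the listed monomials span $r_p(\widetilde{U}_q)$ by using Corollary \ref{4.2.7} and the relations of Proposition \ref{5.5.7} and Corollary \ref{5.5.8} to cap the $y$-exponents, then conclude linear independence from the rank count $n^2+n-1$ supplied by Lemma \ref{5.5.1}. The only (immaterial) difference is that you reach the monomial shapes by reducing words in the ring generators via Corollaries \ref{5.5.3} and \ref{5.5.5}, whereas the paper rewrites the known $\mathbb Z$-basis of Lemma \ref{5.5.1} directly through Lemma \ref{5.5.2}; your explicit handling of the $x_t$-terms reintroduced by the $x_t-\varepsilon_t$ relation is a welcome clarification.
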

\begin{proof}
By Corollary \ref{4.2.7}, Lemma \ref{5.5.2}, Corollary  \ref{5.5.8} and Proposition \ref{5.5.7},  one knows that $r_p(\widetilde{U}_q)$ is generated as a $\mathbb Z$-module by $$\begin{array}{c}
\left\{y^i,y^jx_t,y^l\varepsilon_t\left|0\<i\<2n-2,0\<j\<\frac{n-1}{2},0\<l\<\frac{n-3}{2},1\<t\<n-1\right\}\right..\end{array}$$
 By Proposition \ref{5.5.1}, $r_p(\widetilde{U}_q)$ is a free $\mathbb Z$-module of rank $n^2+n-1$, and so the above set  is a $\mathbb Z$-basis of $r_p(\widetilde{U}_q)$.
\end{proof}
Let $X'=\{y,x_t,\varepsilon_t|1\<t\<n-1\}$, and let $\mathbb Z[X']$ be the corresponding polynomial ring. Define  polynomials  $g_1(y)$, $g_2(y)$, $g_3(y)$ and  $g_4(y)$ in  $\mathbb Z[y]\subset\mathbb Z[X']$ by
$$\begin{array}{l}
 \vspace{0.1cm}
 g_1(y)=\sum_{i=0}^{[\frac{n+1}{4}]}(-1)^i\frac{n+1}{n+1-2i}\binom{\frac{n+1}{2}-i}{i}y^{\frac{n+1}{2}-2i},\\
  \vspace{0.1cm}
 g_2(y)=\sum_{i=0}^{[\frac{n-1}{4}]}(-1)^i\frac{n-1}{n-1-2i}\binom{\frac{n-1}{2}-i}{i}y^{\frac{n-1}{2}-2i},\\
 \vspace{0.1cm}
 g_3(y)=\sum_{2\<i\<\frac{n-1}{2}}(-1)^{i}\binom{n-2-i}{i-2}y^{n-2i}+y^2-1,\\
 g_4(y)=\sum_{k=1}^{\frac{n-1}{2}}\sum_{i=0}^{[\frac{n-1-2k}{4}]}(-1)^i\frac{k+2i}{k+i}\binom{k+i}{i}y^{k}
 +\sum_{1\<i\<[\frac{n-1}{4}]}(-1)^i2+1.
\end{array}$$
Take a subset $I$ of $\mathbb Z[X']$ by
 $$I=\left\{\left.\begin{array}{l}
  f_1(y)f_2(y),\ (g_1(y)-g_2(y))x_t,\\
g_4(y)(x_t-\varepsilon_t),\\
\ x_tx_{t'}-x_t\varepsilon_{t'},
 \ x_{t}x_{t_1}-\varepsilon_{t}\varepsilon_{t_1},\\
x_{t}x_{t_2}-g_4(y)x_{t+t_2},\\
x_{t}x_{t_3}-g_4(y)x_{t+t_3-n},\\
 x_{t}x_{n-t}-(f_4(y)+1)f_1(y), \\
 \varepsilon_{t}\varepsilon_{n-t}-g_3(y)f_1(y)\\
 \end{array}
\right|\begin{array}{l}1\<t,t',t_1,
t_2, t_3,  \<n-1\\
{\rm with} \  t+t_1\neq n,\\ t+t_2<n,
t+t_3>n,
\end{array}\right\},$$ where $f_1(y)$, $f_2(y)$ and $ f_4(y)$ are given in the last subsection.
Denote by $J$ the ideal of $\mathbb Z[X']$ generated by $I$. Then we have the following theorem.
\begin{theorem}\label{5.5.10}
The projective ring  $r_p(\widetilde{U}_q)$ is isomorphic to $\mathbb Z[X']/J$.
\end{theorem}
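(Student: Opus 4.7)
The argument will follow the standard template for such presentation-by-generators-and-relations theorems, essentially the same one used for Theorem \ref{4.2.8}. By Corollary \ref{5.5.3}, I first define a ring epimorphism $\phi:\mathbb{Z}[X']\rightarrow r_p(\widetilde{U}_q)$ by $\phi(y)=[V_2]$, $\phi(x_t)=[{\bf V}(t,0)]$ and $\phi(\varepsilon_t)=[{\bf V}(t,1)]$. I then verify that every generator of $I$ lies in $\ker\phi$: the element $f_1(y)f_2(y)$ vanishes by Proposition \ref{4.2.6} combined with $r_p(\overline{U}_q)\subseteq r_p(\widetilde{U}_q)$; $(g_1(y)-g_2(y))x_t$ vanishes by Proposition \ref{5.5.7}; $g_4(y)(x_t-\varepsilon_t)$ vanishes by Corollary \ref{5.5.8}; and the remaining generators, which encode the multiplication rules for products of $x$'s and $\varepsilon$'s (including the two special cases $t+t'=n$), vanish by Corollary \ref{5.5.5}. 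Consequently $\phi$ descends to a surjective ring homomorphism $\overline{\phi}:\mathbb{Z}[X']/J\rightarrow r_p(\widetilde{U}_q)$.

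The remaining task is injectivity of $\overline{\phi}$, which I plan to obtain by showing that $\mathbb{Z}[X']/J$ is spanned as a $\mathbb{Z}$-module by the image of
$$S=\{y^i,\, y^jx_t,\, y^l\varepsilon_t\mid 0\<i\<2n-2,\ 0\<j\<\tfrac{n-1}{2},\ 0\<l\<\tfrac{n-3}{2},\ 1\<t\<n-1\}.$$
Since $\overline{\phi}(S)$ is precisely the $\mathbb{Z}$-basis of $r_p(\widetilde{U}_q)$ exhibited in Corollary \ref{5.5.9}, which has rank $n^2+n-1$, the spanning property will force $\overline{\phi}$ to be a $\mathbb{Z}$-linear, hence ring, isomorphism via a rank-count argument.

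To prove $S$ spans I proceed in three stages, using relations from $I$. Stage one: $f_1(y)f_2(y)\equiv 0$ reduces any $y^k$ with $k\>2n-1$ to a $\mathbb{Z}$-combination of $\{y^i:0\<i\<2n-2\}$, exactly as in Corollary \ref{4.2.7}. Stage two: the relation $(g_1(y)-g_2(y))x_t\equiv 0$, together with the fact that $g_1(y)-g_2(y)$ is monic up to sign of degree $(n+1)/2$, reduces $y^jx_t$ for $j\>(n+1)/2$; an analogous reduction for $y^l\varepsilon_t$ will be derived by combining $g_4(y)(x_t-\varepsilon_t)\equiv 0$ with the quadratic identities $x_tx_{t'}\equiv x_t\varepsilon_{t'}\equiv\varepsilon_t\varepsilon_{t'}$ (valid when $t+t'\neq n$), which allow the $x_t$-reduction rule to be transferred to $\varepsilon_t$. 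Stage three: the remaining relations in $I$ reduce every quadratic monomial in the $x$'s and $\varepsilon$'s either to a $\mathbb{Z}[y]$-multiple of a single $x_s$ (for $s\equiv t+t'\pmod n$) or, when $t+t'=n$, to a pure polynomial in $y$; hence no monomial of total $(x,\varepsilon)$-degree $\geq 2$ survives, and iteration of the three stages terminates in $S$.

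The main anticipated obstacle is the second stage: deriving a usable $y^l\varepsilon_t$-reduction from relations that visibly only treat $x_t$, and then checking at stage three that when the quadratic relations of the form $x_tx_{t'}-g_4(y)x_{t+t'}$ are applied, the $g_4(y)$ prefactor does not reintroduce $y$-powers outside the allowed range $0\<j\<(n-1)/2$. This is essentially a bookkeeping problem dictated by the dimension count, but it must be done carefully so that the ring epimorphism $\overline{\phi}$ is forced to map a spanning set of the correct cardinality onto a basis.
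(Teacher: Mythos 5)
Your proposal is correct and follows essentially the same route as the paper: define the epimorphism $\phi$ via Corollary \ref{5.5.3}, kill the generators of $J$ using Proposition \ref{4.2.6}, Corollaries \ref{5.5.8}--\ref{5.5.5} and Proposition \ref{5.5.7}, and then show the induced map $\overline{\phi}$ sends a $\mathbb Z$-module spanning set of $\mathbb Z[X']/J$ onto the basis of Corollary \ref{5.5.9}, forcing it to be an isomorphism. Your extra detail on the reduction stages only makes explicit what the paper leaves implicit in the phrase ``by the definition of $I$.''
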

\begin{proof}
By Corollary \ref{5.5.3}, there is a ring epimorphism $\phi$ from $\mathbb Z[X']$ onto $ r_p(\widetilde{U}_q)$ such that
$\phi(y)=[V_2]$, $\phi(x_t)=[{\bf V}(t,0)]$ and $\phi(\varepsilon_t)=[{\bf V}(t,1)]$ for $1\<t\<n-1$. By Proposition \ref{4.2.6}, Corollaries \ref{5.5.8}-\ref{5.5.5} and Proposition \ref{5.5.7}, $\phi(I)=0$. Hence $\phi$ induces a ring epimorphism $\overline{\phi}:\mathbb Z[X']/J\longrightarrow r_p(\widetilde{U}_q)$ such that $\phi=\overline{\phi}\pi$, where $\pi:\mathbb Z[X']\longrightarrow \mathbb Z[X']/J$ is the canonical projection. Let $\overline{u}:=\pi(u)$ for any $u\in \mathbb Z[X]$. Then by the definition of $I$, one knows that $\mathbb Z[X']/J$ is generated, as a $\mathbb Z$-module, by
$$\begin{array}{c}
W=\left\{\overline{y}^i,\overline{y}^j\overline{x}_t,\overline{y}^l\overline{w}_t\left|0\<i\<2n-2,0\<j\<\frac{n-1}{2},
0\<l\<\frac{n-3}{2},1\<t\<n-1\right\}\right..\\
\end{array}$$
Moreover, we have $\phi(\overline{y}^i)=[V_2]^i$, $\phi(\overline{y}^j\overline{x}_t)=[V_2]^j[{\bf V}(t,0)]$ and $\phi(\overline{y}^l
\overline{w}_t)=[V_2]^l[{\bf V}(t,1)]$. By Corollary \ref{5.5.9}, $W$ is a linearly independent set over $\mathbb Z$. It follows that the set $W$ is a $\mathbb Z$-basis of $\mathbb Z[X']/J$. Consequently, $\overline{\phi}$ is a
$\mathbb Z$-module isomorphism, hence a ring isomorphism.
\end{proof}

Now we investigate the Green ring $r(\widetilde{U}_q)$. By Theorem \ref{5.3.10}, we have the following lemma.

\begin{lemma}\label{5.5.11}
$r(\widetilde{U}_q)$ is a free $\mathbb Z$-module with a $\mathbb Z$-basis
$$\left\{\left.
\begin{array}{l}
[V_l], [P_t], [\Omega^{\pm s}V_t],\\

[M_s(t,\eta)], [{\bf V}(t,r)]\\
\end{array}
\right|
\begin{array}{l}
1\<l\<n,1\<t\<n-1,\\
 s\>1, \eta\in \overline{\Bbbk}, r\in\mathbb{Z}_n\\
 \end{array}
 \right\}$$
\end{lemma}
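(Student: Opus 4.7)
The plan is to deduce this statement immediately from the classification of indecomposable $H$-modules together with the general structural fact about Green rings recalled in Subsection~2.1. Recall from there that for any Hopf (or quasi-Hopf) algebra $H$, the Green ring $r(H)$ is by construction the free abelian group on isomorphism classes $[V]$ with $V$ running over $\mathrm{Ind}(H)$, modulo the relations $[U\oplus V]=[U]+[V]$; by the Krull--Schmidt theorem these relations amount to no additional identification, so that $\{[V]\mid V\in\mathrm{Ind}(H)\}$ is automatically a $\mathbb Z$-basis.

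With that in hand, the task reduces to writing down the indecomposables explicitly. Theorem~\ref{5.3.10} already provides the list
$$\{V_l,\ {\bf V}(t,r),\ P_t,\ \Omega^{\pm s}V_t,\ M_s(t,\eta)\mid 1\<l\<n,\ 1\<t\<n-1,\ r\in\mathbb{Z}_n,\ s\>1,\ \eta\in\overline{\Bbbk}\}$$
as a complete set of non-isomorphic indecomposable $H$-modules. So the only remaining task is to convince oneself that the parameters in Lemma~\ref{5.5.11} label pairwise non-isomorphic modules. For the classical part $\{V_l, P_t, \Omega^{\pm s}V_t, M_s(t,\eta)\}$ this is built into Subsection~\ref{4.1} via the identification $He_0\cong\overline{U}_q$ and the classification results imported from \cite{Ch4}; for the new family $\{{\bf V}(t,r)\}$ the non-redundancy is precisely the content of Proposition~\ref{5.2.4}, which shows $\mathbf{V}(t,r)\cong\mathbf{V}(t',r')$ iff $t=t'$ and $r\equiv r'\pmod n$. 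Moreover, the two families cannot overlap because modules of the first family are $He_0$-modules while those of the second lie in $He_{n-i}$ with $i\neq 0$, and these are distinct blocks by Corollary~\ref{5.3.2}.

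Combining the three ingredients --- (i) $r(H)$ is $\mathbb Z$-free on $\mathrm{Ind}(H)$, (ii) Theorem~\ref{5.3.10} lists $\mathrm{Ind}(H)$, and (iii) Propositions~\ref{5.2.4} and Corollary~\ref{5.3.2} together with the classification in \cite{Ch4} guarantee that the parametrisation in the statement is bijective onto isomorphism classes --- one obtains the desired $\mathbb Z$-basis. Since no obstacle beyond bookkeeping arises, the proof will simply be a one-line citation of Theorem~\ref{5.3.10} and the general description of $r(H)$ recalled in Subsection~\ref{2.1}.
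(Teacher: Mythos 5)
Your proposal is correct and is exactly the paper's argument: the lemma is stated there as an immediate consequence of Theorem~\ref{5.3.10} combined with the general fact from Subsection~\ref{2.1} that $r(H)$ is free on the isomorphism classes of indecomposables. Your extra remarks on the non-redundancy of the parametrisation (via Proposition~\ref{5.2.4} and Corollary~\ref{5.3.2}) are already packaged into the phrase ``complete set of all non-isomorphic indecomposable $H$-modules'' in Theorem~\ref{5.3.10}, so nothing further is needed.
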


In $r(\widetilde{U}_q)$, let $z_{+}=[\Omega V_1]$, $z_{-}=[\Omega^{-1} V_1]$, $w_{s,\eta}=[M_s(1,\eta)]$ for $s\>1$, $\eta\in \overline{\Bbbk}$ as in the last subsection, and let $y=[V_2]$, $x_t=[{\bf V}(t,0)]$, $\varepsilon_t=[{\bf V}(t,1)]$ as above, where $1\<t\<n-1$.
\begin{proposition}\label{5.5.12}
 $r(\widetilde{U}_q)$ is generated as a ring by $$\{y,x_t,\varepsilon_t,z_+,z_-,w_{s,\eta} |1\<t\<n-1, s\>1,\eta\in \overline{\Bbbk}\}.$$
\end{proposition}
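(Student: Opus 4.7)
The plan is to exhibit every basis element in the $\mathbb Z$-basis of $r(\widetilde{U}_q)$ listed in Lemma \ref{5.5.11} as a polynomial in the claimed generators. Split the basis into two parts:
\begin{enumerate}
\item[(i)] the ``projective/simple'' part $\{[V_l],[P_t],[{\bf V}(t,r)]\}$, which lies in the projective class ring $r_p(\widetilde{U}_q)$;
\item[(ii)] the ``stable'' part $\{[\Omega^{\pm s}V_t],[M_s(t,\eta)]\}$, which consists entirely of isomorphism classes of $\overline{U}_q$-modules and therefore lives in the subring $r(\overline{U}_q)\subseteq r(\widetilde{U}_q)$.
\end{enumerate}

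For part (i), I would cite Corollary \ref{5.5.3} directly: $r_p(\widetilde{U}_q)$ is generated as a ring by $\{y,x_t,\varepsilon_t\mid 1\<t\<n-1\}$, so every one of the classes $[V_l],[P_t],[{\bf V}(t,r)]$ is already a polynomial in the proposed generators. For part (ii), I would invoke the monoidal embedding $\mathrm{Mod}\,\overline{U}_q\hookrightarrow\mathrm{Mod}\,H$ from the algebra isomorphism $\psi:He_0\to\overline{U}_q$ (as noted at the start of Section 4), which yields the subring inclusion $r(\overline{U}_q)\subseteq r(\widetilde{U}_q)$. Then Proposition \ref{4.2.14} says that $r(\overline{U}_q)$ is generated as a ring by $\{y,z_+,z_-,w_{k,\eta}\mid k\>1,\eta\in\overline{\Bbbk}\}$, so each $[\Omega^{\pm s}V_t]$ and $[M_s(t,\eta)]$ is a polynomial in $y,z_+,z_-,w_{k,\eta}$.

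Combining (i) and (ii), the subring generated by $\{y,x_t,\varepsilon_t,z_+,z_-,w_{s,\eta}\}$ contains every element of the $\mathbb Z$-basis of Lemma \ref{5.5.11} and hence equals $r(\widetilde{U}_q)$. This is really a bookkeeping assembly of earlier results, so there is no substantive obstacle to overcome; the only point to double-check is that the indecomposable $\widetilde{U}_q$-modules which are \emph{not} already $\overline{U}_q$-modules, namely the simple modules ${\bf V}(t,r)$, are accounted for by the generators $x_t,\varepsilon_t$, and this is exactly the content of Lemma \ref{5.5.2} together with Corollary \ref{5.5.3}.
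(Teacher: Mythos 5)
Your proposal is correct and matches the paper's argument, whose proof is exactly the one-line citation of Lemma \ref{5.5.11}, Proposition \ref{4.2.14} and Corollary \ref{5.5.3}; you have simply spelled out the bookkeeping (splitting the basis into the part handled by $r_p(\widetilde{U}_q)$ and the part lying in the subring $r(\overline{U}_q)$) that the paper leaves implicit.
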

\begin{proof}
Follows from Lemma \ref{5.5.11}, Proposition \ref{4.2.14} and Corollary \ref{5.5.3}.
\end{proof}

\begin{proposition}\label{5.5.13}
 The following set is also a $\mathbb Z$-basis of $r(\widetilde{U}_q)$.\\
 $$\left\{ \left. \begin{array}{l} y^j,y^kx_t, y^i\varepsilon_t,\\
 y^lz^s_+,y^lz^s_-,\\ y^lw_{s,\eta}\end{array}\right|\begin{array}{l}0\<j\<2n-2,0\<k\<\frac{n-1}{2},\\
 0\<i\<\frac{n-3}{2}, 1\<t\<n-1,\\ 0\<l\<n-2, s\>1,\eta\in \overline{\Bbbk}\end{array} \right\}.$$
\end{proposition}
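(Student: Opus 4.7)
The plan is to exploit the $\mathbb Z$-module direct sum decomposition $r(\widetilde{U}_q)=r(\overline{U}_q)\oplus L$ afforded by \leref{5.5.11}, where $L$ denotes the $\mathbb Z$-span of $\{[{\bf V}(t,r)]:1\<t\<n-1,\,r\in\mathbb Z_n\}$ and $r(\overline{U}_q)$ carries the $\mathbb Z$-basis $\{[V_l],[P_t],[\Omega^{\pm s}V_t],[M_s(t,\eta)]\}$. Split the proposed set $\mathcal{B}$ into the two pieces $\mathcal{B}_{\mathrm{U}}:=\{y^j,\,y^lz_+^s,\,y^lz_-^s,\,y^lw_{s,\eta}\}$ and $\mathcal{B}_L:=\{y^kx_t,\,y^i\varepsilon_t\}$ with the stated index ranges. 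By \prref{4.2.15}, $\mathcal{B}_{\mathrm{U}}\subseteq r(\overline{U}_q)$ is already a $\mathbb Z$-basis of $r(\overline{U}_q)$; so it suffices to prove that $\mathcal{B}_L$ is a $\mathbb Z$-basis of $L$.

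To verify the containment $\mathcal{B}_L\subseteq L$, first rewrite $y^k=[V_2^{\otimes k}]$ as an integer combination of the $[V_l]$'s via \leref{4.2.2}, and then apply \prref{5.4.3} to decompose each tensor product $V_l\otimes{\bf V}(t,0)$ (respectively $V_l\otimes{\bf V}(t,1)$) as a direct sum of modules of the form ${\bf V}(t,j)$; hence both $y^kx_t$ and $y^i\varepsilon_t$ lie in $L$. Linear independence of $\mathcal{B}_L$ is then immediate from \coref{5.5.9}, since $\{y^j:0\<j\<2n-2\}\cup\mathcal{B}_L$ is a $\mathbb Z$-basis of $r_p(\widetilde{U}_q)$ and any subset of a basis is linearly independent.

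For spanning of $L$, pick an arbitrary $[{\bf V}(t,r)]$. By \coref{5.3.9} it lies in $r_p(\widetilde{U}_q)$, so by \coref{5.5.9} it admits a (unique) integer expansion $\sum a_jy^j+\sum b_{k,t'}y^kx_{t'}+\sum c_{i,t'}y^i\varepsilon_{t'}$. Projecting this identity onto the $L$-summand of $r(\widetilde{U}_q)=r(\overline{U}_q)\oplus L$ annihilates $\sum a_jy^j\in r(\overline{U}_q)$ and leaves $[{\bf V}(t,r)]=\sum b_{k,t'}y^kx_{t'}+\sum c_{i,t'}y^i\varepsilon_{t'}$, which exhibits $[{\bf V}(t,r)]$ as a $\mathbb Z$-combination of elements of $\mathcal{B}_L$. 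Combined with \prref{4.2.15}, this shows $\mathcal{B}=\mathcal{B}_{\mathrm{U}}\cup\mathcal{B}_L$ is a $\mathbb Z$-basis of $r(\widetilde{U}_q)$.

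The one delicate point is the containment $\mathcal{B}_L\subseteq L$: one must track how the expansion of $y^k$ in the $[V_l]$'s interacts with the decomposition rules of \prref{5.4.3} and confirm that tensoring with any $V_l$ preserves the $\mathbb Z$-submodule $L$ spanned by the ${\bf V}(t,\cdot)$-classes. Once this containment is in hand, the projection step is a purely formal consequence of the integer basis identities of \coref{5.5.9}.
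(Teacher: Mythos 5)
Your proposal is correct and follows essentially the same route as the paper: both use the $\mathbb Z$-module splitting $r(\widetilde{U}_q)=r(\overline{U}_q)\oplus N$ from \leref{5.5.11}, reduce to showing $\{y^kx_t,y^i\varepsilon_t\}$ is a basis of the span $N$ of the $[{\bf V}(t,r)]$ via \coref{5.5.9}, \prref{5.4.3} and \coref{4.2.7}, and then invoke \prref{4.2.15} for the $r(\overline{U}_q)$ part. The only difference is that you spell out the projection/containment steps that the paper leaves implicit.
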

\begin{proof}
Let $N$ be the $\mathbb Z$-submodule of $r(\widetilde{U}_q)$ generated by
$\{[{\bf V}(t,r)]|1\<t\<n-1, r\in\mathbb{Z}_n\}.$ Then $r(\widetilde{U}_q)=N\oplus r(\overline{U}_q)$ as $\mathbb Z$-modules. By Corollary \ref{5.5.9}, $r_p(\widetilde{U}_q)$ has a $\mathbb Z$-basis
$$\begin{array}{c}
\left\{y^j,y^kx_t,y^i\varepsilon_t\left|0\<j\<2n-2,0\<k\<\frac{n-1}{2},
0\<i\<\frac{n-3}{2},1\<t\<n-1\right\}\right.\\
\end{array}$$
By Lemmas \ref{4.2.1} and \ref{5.5.1},  $r_p(\widetilde{U}_q)=N\oplus r_p(\overline{U}_q)$.
Hence it follows from Proposition \ref{5.4.3} and Corollary \ref{4.2.7} that $\{y^kx_t,y^i\varepsilon_t|0\<k\<\frac{n-1}{2},0\<i\<\frac{n-3}{2},1\<t\<n-1\}$ is a $\mathbb Z$-basis of $N$.
Thus, the proposition follows from Proposition \ref{4.2.15}.
\end{proof}

\begin{proposition}\label{5.5.14}
Let $1\<t\<n-1$, $s\>1$ and $\eta\in \overline{\Bbbk}$. Then we have
\begin{enumerate}
\item[$(1)$] $
z_+x_t=z_-x_t=(2g(y)-1)x_t$,
\item[$(2)$] $z_+\varepsilon_t=z_-\varepsilon_t=(2g(y)-1)\varepsilon_t$,
\item[$(3)$] $w_{s,\eta}x_t=w_{s,\eta}\varepsilon_t=sg(y)\varepsilon_t$,
\end{enumerate}
where $g(y)=\sum_{k=1}^{\frac{n-1}{2}}\sum_{i=0}^{[\frac{n-1-2k}{4}]}(-1)^i\frac{k+2i}{k+i}\binom{k+i}{i}y^{k}
+\sum_{1\<i\<[\frac{n-1}{4}]}(-1)^i2+1\in r(\widetilde{U}_q)$.
\end{proposition}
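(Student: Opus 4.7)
The plan is to reduce each identity to computations of tensor products of simple $H$-modules with the projective simples ${\bf V}(t,0)$ and ${\bf V}(t,1)$, using Lemma~\ref{L1} together with the explicit knowledge of the composition factors of $\Omega^{\pm 1}V_1$ and $M_s(1,\eta)$, and then collapsing the resulting sums via Corollary~\ref{5.5.8}.

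First I would record the key inputs. By Corollary~\ref{5.3.9} both $x_t=[{\bf V}(t,0)]$ and $\varepsilon_t=[{\bf V}(t,1)]$ are projective, so Lemma~\ref{L1} applies whenever one of the tensor factors is projective. From Section~3 (and as used in Corollaries~\ref{5.4.7} and~\ref{5.4.9}) the indecomposables $\Omega V_1$ and $\Omega^{-1}V_1$ each have exactly two simple composition factors, namely $V_1$ with multiplicity $1$ and $V_{n-1}$ with multiplicity $2$, while $M_s(1,\eta)$ has $V_1$ and $V_{n-1}$ each with multiplicity $s$. Finally, from the proof of Corollary~\ref{5.5.8} we have the crucial identity
$$\textstyle\sum_{j=0}^{n-1}[{\bf V}(t,j)]=g(y)\,x_t=g(y)\,\varepsilon_t$$
in $r(\widetilde{U}_q)$.

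For parts (1) and (2), I would apply Lemma~\ref{L1} to get, for $\tau\in\{x_t,\varepsilon_t\}$,
$$z_\pm\tau=[V_1\otimes{\bf V}(t,r)]+2[V_{n-1}\otimes{\bf V}(t,r)],$$
where $r=0$ or $r=1$. Using Proposition~\ref{5.4.3}(2) (with $l=n-1$ even, since $n$ is odd), $V_{n-1}\otimes{\bf V}(t,r)\cong\bigoplus_{j=0}^{n-2}{\bf V}(t,r+1+j)$; the indices $r+1,r+2,\ldots,r+n-1$ run over $\mathbb{Z}_n\setminus\{r\}$, so $[V_{n-1}\otimes{\bf V}(t,r)]=\sum_{j=0}^{n-1}[{\bf V}(t,j)]-[{\bf V}(t,r)]=g(y)\tau-\tau$. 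Substituting gives $z_\pm\tau=\tau+2(g(y)\tau-\tau)=(2g(y)-1)\tau$, yielding (1) and (2) simultaneously.

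For part (3), I would argue even more directly: Corollary~\ref{5.4.9} already says $M_s(1,\eta)\otimes{\bf V}(t,r)\cong\bigoplus_{j=0}^{n-1}s\,{\bf V}(t,j)$, so taking $r=0$ or $r=1$ and invoking Corollary~\ref{5.5.8} immediately gives $w_{s,\eta}x_t=w_{s,\eta}\varepsilon_t=s g(y)\varepsilon_t$ (noting $g(y)x_t=g(y)\varepsilon_t$). The main potential pitfall is purely bookkeeping: verifying that the offsets in Proposition~\ref{5.4.3}(2) reduce modulo $n$ in exactly the way claimed, so that the sum $\sum_{j=0}^{n-2}[{\bf V}(t,r+1+j)]$ is the sum over $\mathbb{Z}_n$ minus the single term at $j=r$. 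Once this indexing is nailed down, everything else is a direct appeal to Lemma~\ref{L1} and the identities already established in Corollaries~\ref{5.4.9} and~\ref{5.5.8}.
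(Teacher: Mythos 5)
Your proposal is correct and follows essentially the same route as the paper, whose proof is just the citation ``Follows from Corollaries \ref{5.4.7}--\ref{5.4.9} and the proof of Corollary \ref{5.5.8}'': for parts (1)--(2) you re-derive the $s=1$, $l=1$ case of Corollary \ref{5.4.7} from Lemma \ref{L1} and Proposition \ref{5.4.3} rather than quoting it, but the ingredients and the collapsing identity $\sum_{j=0}^{n-1}[{\bf V}(t,j)]=g(y)x_t=g(y)\varepsilon_t$ are exactly those the paper intends. The index bookkeeping you flag (that $r+1,\dots,r+n-1$ exhausts $\mathbb{Z}_n\setminus\{r\}$) checks out, so no gap remains.
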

\begin{proof}
Follows from Corollaries \ref{5.4.7}-\ref{5.4.9} and the proof of Corollary \ref{5.5.8}.
\end{proof}
Let $Y=\{ y, z_+, z_-, w_{k,\eta},x_t,\varepsilon_t|k\>1, 1\<t\<n-1,\eta\in\ol{\Bbbk}\}$, and let $\mathbb{Z}[Y]$
be the corresponding polynomial ring. Then $U\subseteq \mathbb{Z}[X]\subseteq \mathbb{Z}[Y]$ and $I\subseteq \mathbb{Z}[X']\subseteq \mathbb{Z}[Y]$, where $U$ and $\mathbb{Z}[X]$ are defined in the last subsection.

Let $U_0$ be a subset of $\mathbb{Z}[Y]$ as follows:
$$U_0=\left\{\left.\begin{array}{l}
  (z_+-z_-)x_t,\\
(z_+-z_-)\varepsilon_t,\\
w_{s,\eta}x_t-w_{s,\eta}\varepsilon_t,\\
z_+x_t-(2g_4(y)-1)x_t,\\
z_+\varepsilon_t-(2g_4(y)-1)\varepsilon_t,\\
w_{s,\eta}x_t-sg_4(y)\varepsilon_t,\\

 \end{array}
\right|\begin{array}{l}
s \>1,\\
1\<t\<n-1,\\
\eta \in\ol{\Bbbk}\\
\end{array}\right\},$$
Let $T$ be the ideal of  $\mathbb{Z}[Y]$ generated by $I\cup U\cup U_0$.

\begin{theorem}\label{5.5.15}
The Green ring $r(\widetilde{U}_q)$ is isomorphic to  $\mathbb{Z}[Y]/T$
\end{theorem}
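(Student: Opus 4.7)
The plan is to mimic the argument used for Theorems \ref{4.2.17} and \ref{5.5.10}, combining the two ingredients: the relations from $r(\overline{U}_q)$ (captured by $U$) together with the new relations $I$ for the projective class ring $r_p(\widetilde{U}_q)$ and the cross-relations $U_0$ describing how non-simple indecomposable $He_0$-modules interact with the new simples ${\bf V}(t,r)$.

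First I will use Proposition \ref{5.5.12} to define a ring epimorphism
$\phi: \mathbb{Z}[Y] \twoheadrightarrow r(\widetilde{U}_q)$
by sending $y \mapsto [V_2]$, $z_\pm \mapsto [\Omega^{\pm 1}V_1]$, $w_{k,\eta}\mapsto [M_k(1,\eta)]$, $x_t\mapsto [{\bf V}(t,0)]$ and $\varepsilon_t\mapsto [{\bf V}(t,1)]$. Then I need to verify that every element of $I\cup U\cup U_0$ lies in $\ker\phi$: the relations in $U$ are precisely those already checked to hold in $r(\overline{U}_q)\subseteq r(\widetilde{U}_q)$ (cf.\ Theorem \ref{4.2.17}); the relations in $I$ hold by Proposition \ref{4.2.6}, Corollaries \ref{5.5.8} and \ref{5.5.5}, and Proposition \ref{5.5.7} (as used in the proof of Theorem \ref{5.5.10}); and the relations in $U_0$ are exactly the content of Proposition \ref{5.5.14}. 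Hence $\phi$ factors through a ring epimorphism $\overline{\phi}: \mathbb{Z}[Y]/T \twoheadrightarrow r(\widetilde{U}_q)$.

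To show $\overline{\phi}$ is injective, I will exhibit a spanning set of $\mathbb{Z}[Y]/T$ matching the $\mathbb{Z}$-basis of $r(\widetilde{U}_q)$ from Proposition \ref{5.5.13}. Writing $\overline{u}:=\pi(u)$ for the image in $\mathbb{Z}[Y]/T$, the relations in $U$ reduce (by the argument of Theorem \ref{4.2.17}) any monomial in $\overline{y},\overline{z}_\pm,\overline{w}_{k,\eta}$ to the canonical $r(\overline{U}_q)$-basis $\{\overline{y}^j,\overline{y}^l\overline{z}_+^k,\overline{y}^l\overline{z}_-^k,\overline{y}^l\overline{w}_{k,\eta}\}$. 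The relations in $I$ reduce any monomial involving only $\overline{y},\overline{x}_t,\overline{\varepsilon}_t$ to the $r_p(\widetilde{U}_q)$-basis $\{\overline{y}^j,\overline{y}^k\overline{x}_t,\overline{y}^i\overline{\varepsilon}_t\}$ of Corollary \ref{5.5.9}. Finally, the relations in $U_0$ collapse any mixed monomial containing both $\overline{x}_t$ or $\overline{\varepsilon}_t$ and one of $\overline{z}_\pm,\overline{w}_{k,\eta}$ back into the $\mathbb{Z}[\overline{y}]$-linear combinations of $\overline{x}_t$ or $\overline{\varepsilon}_t$ alone. Consequently $\mathbb{Z}[Y]/T$ is spanned over $\mathbb{Z}$ by the set
$$
W=\left\{\overline{y}^j,\overline{y}^k\overline{x}_t,\overline{y}^i\overline{\varepsilon}_t,
\overline{y}^l\overline{z}_+^s,\overline{y}^l\overline{z}_-^s,
\overline{y}^l\overline{w}_{s,\eta}\right\}
$$
subject to the index ranges of Proposition \ref{5.5.13}. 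Since $\overline{\phi}(W)$ is precisely the $\mathbb{Z}$-basis from that proposition, $\overline{\phi}$ sends the spanning set $W$ bijectively onto a $\mathbb{Z}$-basis; thus $W$ itself must be $\mathbb{Z}$-linearly independent in $\mathbb{Z}[Y]/T$, making $\overline{\phi}$ a $\mathbb{Z}$-module, and hence ring, isomorphism.

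The main obstacle is the reduction step showing that every monomial in $\overline{y},\overline{z}_\pm,\overline{w}_{k,\eta},\overline{x}_t,\overline{\varepsilon}_t$ in $\mathbb{Z}[Y]/T$ can be rewritten in terms of the set $W$; in particular one must check that the cross-terms never spawn products like $\overline{z}_+\overline{x}_t\overline{x}_{t'}$ or $\overline{w}_{k,\eta}\overline{\varepsilon}_t\overline{\varepsilon}_{t'}$ that fail to reduce. For these, one first applies the $U_0$-relation to eliminate $\overline{z}_\pm$ or $\overline{w}_{k,\eta}$ in favor of a polynomial in $\overline{y}$ multiplying $\overline{x}_t$ (or $\overline{\varepsilon}_t$), and then applies $I$ to normalize the remaining $\overline{y},\overline{x}_t,\overline{\varepsilon}_t$ part. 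This termination argument is the only non-routine ingredient; once it is in place, the rest of the proof is bookkeeping that follows the template of Theorem \ref{5.5.10}.
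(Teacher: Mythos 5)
Your proposal is correct and follows essentially the same route as the paper: define the epimorphism $\mathbb{Z}[Y]\twoheadrightarrow r(\widetilde{U}_q)$ via Proposition \ref{5.5.12}, kill $I\cup U\cup U_0$ using Theorems \ref{4.2.17}, \ref{5.5.10} and Proposition \ref{5.5.14}, and then match the induced spanning set $W$ of $\mathbb{Z}[Y]/T$ against the $\mathbb{Z}$-basis of Proposition \ref{5.5.13} to conclude that the induced map is a $\mathbb{Z}$-module, hence ring, isomorphism. Your extra remarks on the reduction of mixed monomials simply make explicit a step the paper leaves implicit, so nothing is missing.
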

\begin{proof}
The Green ring $r(\widetilde{U}_q)$ is commutative by \seref{5.4}. Hence by Proposition \ref{5.5.12}, there is a ring epimorphism $\psi$ from $\mathbb{Z}[Y]$ onto  $r(\widetilde{U}_q)$ such that $\psi(y)=[V_2]$, $\psi(z_+)=[\Omega V_1]$, $\psi(z_-)=[\Omega^{-1} V_1]$, $\psi(w_{k,\eta})=[M_k(1,\eta)]$, $\psi(x_t)=[{\bf V}(t,0)]$ and
$\psi(\varepsilon_t)=[{\bf V}(t,1)]$ for any $k\>1$,$1\<t\<n-1$ and $\eta\in \overline{\Bbbk}$. It follows from Theorems \ref{4.2.17}, \ref{5.5.10} and Proposition \ref{5.5.14} that  $\psi(T)=0$. Hence there exists a unique ring epimorphism $\overline{\psi}$ from $\mathbb{Z}[Y]/T$ onto $r(\widetilde{U}_q)$ such that $\overline{\psi}(\overline{u})=\psi(u)$, $u\in \mathbb{Z}[Y]$, where $\overline{u}$ is the image of $u$ under the canonical projection from  $\mathbb{Z}[Y]$ onto  $\mathbb{Z}[Y]/T$. By the definition of $T$, $\mathbb{Z}[Y]/T$ is generated, as a $\mathbb{Z}$-module, by
$$ W=\left\{ \left. \begin{array}{l} \ol{y}^j,\ol{y}^k\ol{x}_t, \ol{y}^i\ol{\varepsilon}_t,\\
 \ol{y}^l\ol{z}^s_+,\ol{y}^l\ol{z}^s_-,\\ \ol{y}^l\ol{w}_{s,\eta}\end{array}\right|\begin{array}{l}0\<j\<2n-2,0\<k\<\frac{n-1}{2},\\
 0\<i\<\frac{n-3}{2}, 1\<t\<n-1,\\ 0\<l\<n-2, s\>1,\eta\in \overline{\Bbbk}\end{array} \right\}.$$
Moreover, we have $\overline{\psi}(\overline{y}^j)=[V_2]^j$, $\overline{\psi}(\overline{y}^{k}\overline{x}_t)=[V_2]^{k}[{\bf V}(t,0)]$,
 $\overline{\psi}(\overline{y}^{i}\overline{\varepsilon}_t)=[V_2]^{i}[{\bf V}(t,1)]$, $\overline{\psi}(\overline{y}^l\overline{z}^s_+)=[V_2]^l[\Omega V_1]^s$,
 $\overline{\psi}(\overline{y}^l\overline{z}^s_-)=[V_2]^l[\Omega^{-1} V_1]^s$ and
 $\overline{\psi}(\overline{y}^l\overline{w}_{s,\eta})=[V_2]^l[M_s(1,\eta)]$.
By Proposition \ref{5.5.13}, $W$ is a linearly independent set over $\mathbb{Z}$, and so $W$ is a $\mathbb Z$-basis of  $\mathbb{Z}[Y]/T$. Consequently, $\overline{\psi}$ is a $\mathbb Z$-module isomorphism, hence a ring isomorphism.
\end{proof}

By Remark \ref{s iso s}, the stable Green ring $r_{st}(\widetilde{U}_q)$ of the small quasi-quantum group $\widetilde{U}_q$ is isomorphic to the stable Green ring $r_{st}(\overline{U}_q)$ of the small quantum group $\overline{U}_q$. It follows from Lemma \ref{4.2.4} that $r_{st}(\overline{U}_q)$ is isomorphic to  $r(\overline{U}_q)/([V_n])$, the quotient ring of $r(\overline{U}_q)$ modulo the ideal $([V_n])$ generated by $[V_n]$. Thus by Theorem \ref{4.2.17}, one obtains the following corollary.
\begin{corollary} Let $\mathbb{Z}[Y']$ be the polynomial ring in following variables:
$$Y'=\{ y, z_+, z_-, w_{k,\eta}|k\>1, \eta\in\ol{\Bbbk}\},$$ and let $J$ be the ideal of  $\mathbb{Z}[Y']$ generated by the following subset:
$$\left\{\begin{array}{l}
  f_1(y), z_+z_--1,\\
(z_+-f_4(y))w_{k,\eta}, (z_+-z_-)w_{k,\eta},\\
w_{k,\eta}w_{s,\a},
w_{k,\eta}(w_{t,\eta}-1-f_4(y))\\
 \end{array}
\left|\begin{array}{l}
k, s, t\>1\\
\mbox{with } k\<t,\\
\eta, \a\in\ol{\Bbbk}\\
\mbox{with } \eta\neq\a\\
\end{array}\right\}\right.,$$
where $f_1(y)$, $f_4(y)\in \mathbb{Z}[y]$ are given in the last subsection. Then the stable Green ring  $r_{st}(\widetilde{U}_q)$ is isomorphic to the quotient ring $\mathbb{Z}[Y']/J$.
\end{corollary}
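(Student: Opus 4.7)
The plan is to reduce the corollary to the presentation of $r(\overline{U}_q)$ in Theorem \ref{4.2.17} and perform a routine quotient by the class of the Steinberg module. First I would invoke Remark \ref{s iso s} to replace $r_{st}(\widetilde{U}_q)$ by $r_{st}(\overline{U}_q)$, and then use the observation (stated in the paragraph just before the corollary) that $r_{st}(\overline{U}_q)\cong r(\overline{U}_q)/([V_n])$, since $V_n$ is the unique simple projective among the $V_l$'s and every other indecomposable projective is a polynomial multiple of $[V_n]$ by Lemma \ref{4.2.4}(2). Thus the task becomes presenting the quotient ring $r(\overline{U}_q)/([V_n])$.

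Next I would combine Theorem \ref{4.2.17} with Lemma \ref{4.2.4}(1). Taking $l=n$ in Lemma \ref{4.2.4}(1) identifies $[V_n]=f_1(y)$ inside $r(\overline{U}_q)$. Since the variable set $X$ used in Theorem \ref{4.2.17} is exactly $Y'$, we obtain
\[
r_{st}(\widetilde{U}_q)\;\cong\;\mathbb{Z}[Y']/\bigl(I_0+(f_1(y))\bigr),
\]
so it remains to show $I_0+(f_1(y))=J$ as ideals of $\mathbb{Z}[Y']$.

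For this I would go through each of the nine generating families of $I_0$ listed before Theorem \ref{4.2.17} and reduce it modulo $f_1(y)$. The generators $f_1(y)f_2(y)$, $f_1(y)(z_+-1-2f_4(y))$, $f_1(y)(z_+-z_-)$, and $f_1(y)(w_{k,\eta}-k-kf_4(y))$ become $0$ modulo $f_1(y)$ and hence are absorbed. The remaining four families reduce to $z_+z_--1$, $(z_+-f_4(y))w_{k,\eta}$, $(z_--f_4(y))w_{k,\eta}$, $w_{k,\eta}w_{s,\alpha}$ (for $\eta\neq\alpha$), and $w_{k,\eta}(w_{t,\eta}-1-f_4(y))$ (for $k\leq t$). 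The difference $(z_+-f_4(y))w_{k,\eta}-(z_--f_4(y))w_{k,\eta}=(z_+-z_-)w_{k,\eta}$ shows that the pair of generators $\{(z_+-f_4(y))w_{k,\eta},(z_--f_4(y))w_{k,\eta}\}$ spans the same subideal as the pair $\{(z_+-f_4(y))w_{k,\eta},(z_+-z_-)w_{k,\eta}\}$ appearing in the statement of the corollary. This gives $I_0+(f_1(y))=J$.

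Finally, to turn this into a proper ring isomorphism I would build the ring epimorphism $\mathbb{Z}[Y']\twoheadrightarrow r_{st}(\widetilde{U}_q)$ by composing the map $\mathbb{Z}[Y']\twoheadrightarrow r(\overline{U}_q)$ of Theorem \ref{4.2.17} with the quotient $r(\overline{U}_q)\twoheadrightarrow r(\overline{U}_q)/([V_n])\cong r_{st}(\overline{U}_q)\cong r_{st}(\widetilde{U}_q)$ and check that its kernel is exactly $J$ using the ideal equality above. The only non-mechanical step is the bookkeeping of generators modulo $f_1(y)$, and the mild point of friction is noticing that replacing $(z_--f_4(y))w_{k,\eta}$ by $(z_+-z_-)w_{k,\eta}$ in the generating set of $J$ does not change the ideal; everything else is a direct substitution.
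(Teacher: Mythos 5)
Your proposal is correct and follows essentially the same route as the paper: identify $r_{st}(\widetilde{U}_q)$ with $r_{st}(\overline{U}_q)$ via Remark \ref{s iso s}, realize the latter as $r(\overline{U}_q)/([V_n])$ using Lemma \ref{4.2.4}, substitute $[V_n]=f_1(y)$ into the presentation of Theorem \ref{4.2.17}, and reduce the generators of $I_0$ modulo $f_1(y)$. The only difference is that you spell out the generator-by-generator bookkeeping (including the harmless exchange of $(z_--f_4(y))w_{k,\eta}$ for $(z_+-z_-)w_{k,\eta}$) that the paper leaves implicit.
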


\centerline{ACKNOWLEDGMENTS}
This work is supported by NNSF of China (Nos. 12071412, 12201545).\\

\end{document}